\DeclareMathOperator{\supp}{supp}
\newcommand*{\borel}[1]{\mathfrak B(#1)}
\newcommand*{\cbb}{\mathbb C}
\newcommand*{\D}{\mathrm d}
\newcommand*{\E}{\mathrm{e}}
\newcommand*{\Ge}{\geqslant}
\newcommand*{\hh}{\mathcal H}
\newcommand*{\is}[2]{\langle#1,#2\rangle}
\newcommand*{\kk}{\mathcal K}
\newcommand*{\jd}[1]{{\mathscr N}(#1)}
\newcommand*{\lambdab}{\boldsymbol\lambda}
\newcommand*{\Le}{\leqslant}
\newcommand*{\nbb}{\mathbb N}
\newcommand*{\ob}[1]{{\mathscr R}(#1)}
\newcommand*{\ogr}[1]{\boldsymbol B(#1)}
\newcommand*{\qbb}{\mathbb Q}
\newcommand*{\rbb}{{\mathbb R}}
\newcommand*{\zbb}{\mathbb Z}
\begin{document}
   \title[The hyperbolic cosine
transform and its applications] {The hyperbolic cosine
transform and its applications to composition
operators}
   \author[J.\ Stochel]{Jan Stochel}
   \address{Instytut Matematyki, Uniwersytet Jagiello\'nski,
ul.\ {\L}ojasiewicza 6, 30-348 Kra\-k\'ow, Poland}
   \email{Jan.Stochel@im.uj.edu.pl}
   \author[J.\ B.\ Stochel]{Jerzy Bart{\l}omiej Stochel}
\address{Faculty of Applied Mathematics,
AGH University of Krakow, Al. Mickiewicza 30, 30-059
Krak\'ow, Poland}
   \email{stochel@agh.edu.pl}
  \thanks{The research of the first author was
supported by the National Science Center (NCN) Grant
OPUS No. DEC-2021/43/B/ST1/01651.}
   \subjclass[2020]{Primary 43A35, 47B33, 47B20;
Secondary 47B90, 44A60}
   \keywords{The hyperbolic cosine transform,
composition operators on $L^2$-spaces, affine symbols,
cosubnormal operators}
   \begin{abstract}
In this paper we characterize hyperbolic cosine
transforms of (positive) Borel measures $\nu$ in terms
of exponential convexity (Bernstein's terminology).
The case of compactly supported measures $\nu$ is also
considered. All of this is then applied to (bounded)
composition operators $C_{T,\rho}\colon f \mapsto f
\circ T$ on $L^2(\rbb^\kappa,\mu_{\rho})$ with affine
symbols $T=A+a$, where $\D \mu_{\rho} (x) = \rho(x) \D
x$, $\rho(x)= \psi(\|x\|)^{-1}$, $\psi$ is a
continuous positive real valued function and
$\|\cdot\|$ is the Euclidean norm on $\rbb^{\kappa}$.
The main result states that the map $\rbb^{\kappa} \ni
a \mapsto C_{I+a,\rho}$ is continuous in the strong
operator topology and has cosubnormal values if and
only if $\psi$ is the hyperbolic cosine transform of a
compactly supported Borel measure ($I$ is the identity
transformation). The case of affine symbols $T$ that
are not translations is also discussed.
   \end{abstract}
   \maketitle

\numberwithin{equation}{section}
\newtheorem*{thmI*}{Theorem}
   \section{Introduction} 
   Denote by $\rbb$ and $\rbb_+$ the sets of all real
and nonnegative real numbers, respectively. Given a
function $\psi\colon \rbb\to \rbb_+$ and a (positive)
Borel measure $\nu$ on $\rbb_+$ such~that
   \begin{align} \label{ch-tr-1}
\psi(x)=\int_{\rbb_+} \cosh(xu) \D \nu(u), \quad x\in
\rbb,
   \end{align}
we call $\psi$ the {\em hyperbolic cosine transform}
of the measure $\nu$ (clearly, $\nu(\rbb_+)<\infty$).
We show that a function $\psi\colon \rbb\to \rbb_+$ is
the hyperbolic cosine transform of a Borel measure on
$\rbb_+$ if and only if it is continuous, even and
exponentially convex (see Theorem~\ref{mmainth}). In
modern terminology, the notion of exponential
convexity attributed to Bernstein is understood as
positive definiteness with respect to the Hermitian
involution on the additive group $\rbb$ (see
Section~\ref{Sec.2.2}). The present paper is mainly
devoted to the study of the hyperbolic cosine
transforms of compactly supported Borel measures on
$\rbb_+$. In addition to the properties mentioned
above, such transforms are characterized by
exponential growth (see Theorem~\ref{maintheorem}).
Moreover, if $\psi$ is the hyperbolic cosine transform
of a nonzero Borel measure $\nu$ on $\rbb_+$, then
$\psi$ is analytic, the limit $\lim_{x\to\infty}
\displaystyle{\frac{\psi^{\prime}(x)}{\psi(x)}}$
exists in $[0,\infty]$ and $\lim_{x\to\infty}
\displaystyle{\frac{\psi^{\prime}(x)}{\psi(x)}} = \sup
\supp \nu$, where $\supp \nu$ stands for the closed
support of $\nu$ (see Theorem~\ref{limsupq-1}).

The motivation for studying the cosine transform comes
from the research on seminormal composition operators
on $L^2$ spaces with matrix symbols, which was
initiated in \cite{Ml78,Sto90} (see also \cite{jsjs11}
for the Laplace density case and \cite{BJJS15,BJJS18}
for the unbounded case). Subsequently, this topic was
continued in \cite{Da-St97} in the context of affine
symbols with a matrix linear part. The question of
cosubnormality\footnote{The subnormal operators are
just restrictions of the normal operators to their
invariant subspaces, and the cosubnormal operators are
the adjoints of the subnormal operators.} (or
subnormality, depending on whether the measure under
consideration is finite or not) is even more
challenging. In this paper, we deal with (bounded)
composition operators $C_{T,\rho}$ on
$L^2(\mu_{\rho})$ defined by
   \begin{align*}
C_{T,\rho}(f) = f \circ T, \quad f \in
L^2(\mu_{\rho}),
   \end{align*}
where
   \begin{enumerate}
   \item[$\bullet$] $T=A+a$, $A$ is an
invertible linear transformation of $\rbb^\kappa$ and
$a\in \rbb^\kappa$,
   \item[$\bullet$] $\D \mu_{\rho} = \rho \D V_\kappa$, where
$V_\kappa$ is the Lebesgue measure on $\rbb^\kappa$,
$\rho(\cdot)= \psi(\|\cdot\|)^{-1}$, $\psi\colon
\rbb\to (0,\infty)$ is a continuous even function and
$\|\cdot\|$ is the Euclidean norm on $\rbb^{\kappa}$.
   \end{enumerate}
According to \cite{Da-St97}, there is extremely little
room for cosubnormality of composition operators with
such symbols. Roughly speaking, the only potential
candidates are composition operators $C_{T,\rho}$ with
translation symbols, i.e., $T=I+a$, where $I$ is the
identity transformation on $\rbb^{\kappa}$ and $a\in
\rbb^{\kappa}$. The main result of this paper,
Theorem~\ref{wkw-uch}, shows that there is a close
relationship between the cosubnormality of such
operators and the hyperbolic cosine transform. The
following is an excerpt from this theorem.
   \begin{thmI*}
Under the above assumptions, the following are
equivalent{\em :}
   \begin{enumerate}
   \item[(i)] $C_{I+a,\rho}$ is
a bounded cosubnormal operator on $L^2(\mu_{\rho})$
for every $a\in \rbb^{\kappa}$, and the map
$\rbb^{\kappa} \ni a \mapsto C_{I+a,\rho}$ is
continuous in the strong operator topology,
   \item[(ii)] $\psi$ is the hyperbolic cosine transform of a
compactly supported Borel measure on $\rbb_+$.
   \end{enumerate}
   \end{thmI*}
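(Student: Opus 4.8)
The plan is to move the cosubnormality question, via a unitary conjugation, to a subnormality question for \emph{plain} translations on a Laplace‑transform‑weighted $L^2$‑space, and then to run the structure theory of hyperbolic cosine transforms (Theorems~\ref{maintheorem} and \ref{limsupq-1}) in both directions.

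\textbf{Reduction.} The multiplication operator $U\colon L^2(\mu_\rho)\to L^2(\rbb^{\kappa},\psi(\|\cdot\|)\,\D V_\kappa)$ given by $Uf=\psi(\|\cdot\|)^{-1}f$ is unitary, and a direct computation gives $UC_{I+a,\rho}^{*}U^{-1}=S_a$, where $S_a g=g(\cdot-a)$ acts on $L^2(\rbb^{\kappa},\psi(\|\cdot\|)\,\D V_\kappa)$. Hence $C_{I+a,\rho}$ is bounded and cosubnormal iff $S_a$ is bounded and subnormal; moreover $\|S_a\|^2$ is the essential supremum over $x$ of $\psi(\|x+a\|)/\psi(\|x\|)$, so all these operators are bounded (indeed $\|S_a\|\Le\E^{R\|a\|/2}$ with $R:=\sup\supp\nu$, using log‑convexity of $\psi$ and $\psi'/\psi\to R$) precisely when $\psi$ has exponential growth.

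\textbf{(ii)$\Rightarrow$(i).} Let $\psi(x)=\int_{[0,R]}\cosh(xu)\,\D\nu(u)$. Boundedness of each $C_{I+a,\rho}$ and strong continuity of $a\mapsto C_{I+a,\rho}$ (which on $L^2(\mu_\rho)$ is $f\mapsto f(\cdot+a)$) follow from the local‑in‑$a$ uniform norm bound above together with density of continuous compactly supported functions and continuity of translation in $L^2$. For cosubnormality, i.e. subnormality of $S_a$, I would first reduce the dimension: choosing coordinates with $a=\alpha e_1$ and writing $x=(x_1,x')$, the space $L^2(\rbb^{\kappa},\psi(\|\cdot\|)\,\D V_\kappa)$ is the direct integral $\int^{\oplus}_{x'\in\rbb^{\kappa-1}}L^2\bigl(\rbb,\psi(\sqrt{x_1^{2}+\|x'\|^{2}})\,\D x_1\bigr)\,\D x'$, under which $S_a$ becomes the direct integral of the one‑dimensional translations $g\mapsto g(\cdot-\alpha)$. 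The crucial point is that each fibre weight is again a hyperbolic cosine transform of a compactly supported measure: from the (Sonine‑type) identity
\begin{equation*}
\cosh\!\bigl(\sqrt{x^{2}+c}\,\bigr)=\int_{[0,1]}\cosh(xv)\,\D\mu_c(v),\qquad
\mu_c=\delta_1+\sum_{k\Ge1}\frac{2\,c^{k}}{4^{k}k!\,(k-1)!}\,(1-v^{2})^{k-1}\mathbf 1_{[0,1]}(v)\,\D v
\end{equation*}
(checked by matching powers of $x^{2}$; $\mu_c\Ge0$ has total mass $\cosh\sqrt c$) one gets $\psi(\sqrt{x_1^{2}+t^{2}})=\int_{[-R,R]}\E^{x_1 v}\,\D\sigma_t(v)$ with $\sigma_t\Ge0$ supported in $[-R,R]$. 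Finally, for any $w(x)=\int_{[-R,R]}\E^{xv}\,\D\sigma(v)$ with $\sigma\Ge0$ the isometry $J\colon L^2(\rbb,w\,\D x)\to\kk:=L^2\bigl(\rbb\times[-R,R],\E^{xv}\,\D x\,\D\sigma(v)\bigr)$, $(Jg)(x,v)=g(x)$, intertwines $g\mapsto g(\cdot-\alpha)$ with $(NF)(x,v)=F(x-\alpha,v)$ on $\kk$, and $N$ is bounded and \emph{normal}, being the direct integral over $v$ of the operators $g\mapsto g(\cdot-\alpha)$ on $L^2(\rbb,\E^{xv}\,\D x)$, each unitarily equivalent to $\E^{\alpha v/2}$ times a translation on $L^2(\D x)$. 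Gluing these fibrewise normal extensions (in $v$, then in $t$, which is legitimate since $\mu_c$ depends analytically on $c$) produces a normal extension of $S_a$; hence $C_{I+a,\rho}$ is cosubnormal.

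\textbf{(i)$\Rightarrow$(ii).} Boundedness of all $C_{I+a,\rho}$ forces $\psi(\|x\|)/\psi(\|x-a\|)$ to be essentially bounded for every $a$, and iterating along a ray yields exponential growth of $\psi$. It remains to show that $\psi$ is exponentially convex, after which Theorem~\ref{maintheorem} concludes. Since $S_a$ is subnormal, the Bram--Halmos conditions hold: $\sum_{k,l=0}^{N}\is{S_a^{k}f_l}{S_a^{l}f_k}\Ge0$ for all finite families $(f_l)$. Taking $f_l=c_l\phi_{l,\varepsilon}$ with $\phi_{l,\varepsilon}$ an $L^2(\D x)$‑normalized bump centred at $z_0+la$ and letting $\varepsilon\to0$ (the functions $S_a^{k}f_l$ and $S_a^{l}f_k$ both concentrate at $z_0+(k+l)a$) gives $\bigl[\psi(\|z_0+(k+l)a\|)\bigr]_{k,l=0}^{N}\succeq0$ for all $z_0,a\in\rbb^{\kappa}$, $N\in\nbb$. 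Specializing $a=xe_1$, $z_0=te_1$ shows $(\psi(|t+nx|))_{n\Ge0}$ has positive semidefinite Hankel matrix for all $t\in\rbb$, $x\Ge0$; passing to principal submatrices (a finite set of rationals lies in a common arithmetic progression, which after a shift absorbed into $t$ occupies consecutive indices) and using continuity of $\psi$ gives $[\psi(x_k+x_l)]\succeq0$ for all real $x_k$, i.e. exponential convexity of $\psi$.

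\textbf{Main difficulty.} I expect the decisive step to be the dimension reduction in (ii)$\Rightarrow$(i): establishing the representation of $\cosh\sqrt{x^{2}+c}$ as a positive hyperbolic cosine transform on $[0,1]$ (so the fibre weights stay Laplace transforms of positive compactly supported measures), and then checking that the fibrewise normal extensions assemble into a genuine normal operator on the direct integral. A subsidiary technical point is the limiting argument inside Bram--Halmos, where the bump normalization and the continuity of $\psi$ must be arranged so that exactly $\psi(\|z_0+(k+l)a\|)$ survives in the limit.
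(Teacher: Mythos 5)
Your proposal is correct in substance but follows a genuinely different route from the paper's in both directions, and the comparison is instructive. For (ii)$\Rightarrow$(i) the paper's engine is Theorem~\ref{Katz} (the Mehta--Kumar/Katsnelson result that $x\mapsto\cosh\sqrt{\xi x^2+\eta}$ is exponentially convex), imported as a black box and combined with the Stieltjes theorem and Lambert's criterion (Lemma~\ref{du-du-2}) to show that $\{\psi(\|x+na\|)\}_{n=0}^{\infty}$ is a Stieltjes moment sequence for every $x$. You instead prove the explicit Sonine--Bessel representation $\cosh\sqrt{x^2+c}=\int_{[0,1]}\cosh(xv)\,\D\mu_c(v)$ with $\mu_c\Ge0$; I checked the coefficient identity (it reduces to the Legendre duplication formula) and it is correct, so your argument in fact contains a self-contained proof of the $\eta\Ge 0$ case of Theorem~\ref{Katz}, which is the only case the paper needs. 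What this buys is an explicit normal extension: each fibre weight becomes a bilateral Laplace transform of a positive measure on $[-R,R]$, and translation on such a weighted $L^2(\rbb)$ visibly dilates to a direct integral of scalar multiples of unitaries. The one soft spot is the final gluing of fibrewise extensions over $x'$, which requires a measurable selection; but you can bypass it entirely, because your representation $\psi(\sqrt{x_1^2+t^2})=\int_{[-R,R]}\E^{x_1v}\,\D\sigma_t(v)$ already exhibits $\psi(\|x+na\|)=\int_{[-R,R]}(\E^{\alpha v})^n\,\E^{x_1v}\,\D\sigma_t(v)$ as a Stieltjes moment sequence for every $x$, and Lambert's criterion (Lemma~\ref{du-du-2}) then gives cosubnormality with no dilation construction at all --- which is exactly how the paper closes this direction. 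For (i)$\Rightarrow$(ii) the paper goes through Lambert's criterion and the $C_0$-semigroup growth bound from \cite[Theorem~1.2.2]{Paz83}; your Bram--Halmos argument with concentrating bumps re-derives the same pointwise Hankel positivity of $n\mapsto\psi(\xi+ns)$ (the limit is sound since $S_a^k f_l$ and $S_a^l f_k$ concentrate at the same point $z_0+(k+l)a$ and $\psi$ is continuous), after which the two arguments coincide: rational approximation as in Lemma~\ref{kiedy-cosub-cosh} yields exponential convexity, and Theorem~\ref{maintheorem} finishes. Note only that individual boundedness of the operators $C_{I+a,\rho}$ does not by itself give the exponential growth condition (ii-d); your ``iterating along a ray'' needs the locally uniform norm bound supplied by the strong-continuity hypothesis (Banach--Steinhaus on a compact segment of parameters), which is precisely the role the semigroup estimate plays in the paper.
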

It is worth noting here that the pioneers of this type
of research were Embry and Lambert, who discovered a
relationship between the subnormality of a weighted
translation semigroup and the Laplace-Stieltjes
transform (see \cite{Em-Lam77B}; see also
\cite{Em-Lam77A,Em-Lam91}). Research in this area,
taking into account both the single-parameter and
multi-parameter cases, is still being conducted (see,
e.g., \cite{Pha-Sho19,Pha-Sho20,Pha-Sho23}).

One of the main tools used in the proof of
Theorem~\ref{wkw-uch} is the theorem which states that
the function
   \begin{align*}
\rbb \ni x \longmapsto \cosh\big(\sqrt{\xi x^2 +
\eta}\/\big)
   \end{align*}
is exponentially convex for all $\xi, \eta \in \rbb_+$
(see \cite{Me-Kum76}; see also \cite{Kat16,Kat17}).
This is a special case of the BMV conjecture proved by
Stahl in \cite{Sta13}.

The paper is organized as follows. Sections~\ref{Sec.2.1} and
\ref{Sec.2.2} are introductory and are devoted to
highlighting the relationship between the hyperbolic cosine
transform, the Stieltjes moment problem (see
Lemma~\ref{wkw-supp-zw} and Theorem~\ref{cosh-Sti}) and the
positive definiteness in the $*$-semigroup sense. In
Section~\ref{Sec.3} we establish necessary and sufficient
conditions for a real-valued function on $\rbb$ to be the
hyperbolic cosine transform of a Borel measure on $\rbb_+$
(see Theorem~\ref{mmainth}). The case of hyperbolic cosine
transforms of compactly supported measures on $\rbb_+$ is
treated in Theorem~\ref{maintheorem}. In Section~\ref{Sec.4},
we give an explicit formula for the supremum of the closed
support of $\nu$ written in terms of the logarithmic
derivative of $\psi$ (see Theorem~\ref{limsupq-1}).
Section~\ref{Sec.3.1} provides basic facts about the
composition operators under consideration.
Section~\ref{Sec.3.2} contains the main result of this paper,
Theorem~\ref{wkw-uch}, which relates the hyperbolic cosine
transform to cosubnormality when $T=I+a$. There are also
several results leading from the cosubnormality of
composition operators with more general symbols $T=A+a$ to
the hyperbolic cosine transform (see
Propositions~\ref{impi-ch} and \ref{wkw-ukh}). In
Section~\ref{Sec.3.3}, we give an example of cohyponormal
composition operators $\{C_{I+a,\rho}\} _{a\in
\rbb^{\kappa}}$ for which there exists $\varepsilon \in
(0,\infty)$ such that $C_{I+a,\rho}$ is not cosubnormal for
any $a\in \rbb^{\kappa}$ with $0 < \|a\| < \varepsilon$ (see
Theorem~\ref{blis-zer} and Proposition~\ref{blis-zur}). In
Section~\ref{Sec.3.4}, we relate our main result to the
Embry-Lambert theorem (see \cite[Theorem~2.2]{Em-Lam77B}),
and show that considering unbounded cosubnormal composition
operators $C_{T,\rho}$ opens up the possibility of studying
hyperbolic cosine transforms of Borel measures with unbounded
supports, essentially enlarging the associated class of
density functions (see Example~\ref{invdid}). The paper is
accompanied by an appendix containing another proof of
Theorem~\ref{maintheorem}.

\numberwithin{equation}{subsection}
\newtheorem{thm}{Theorem}[subsection]
\newtheorem{cor}[thm]{Corollary}
\newtheorem{lem}[thm]{Lemma}
\newtheorem{pro}[thm]{Proposition}
\newtheorem{opq}[thm]{Problem}
\newcounter{step}
\newtheorem*{step}{Step \thestep}
\theoremstyle{remark}
\newtheorem{rem}[thm]{Remark}
\newtheorem{exa}[thm]{Example}
\newtheorem*{remm}{Remark}
\theoremstyle{definition}
\newtheorem{elu}[thm]{Example}
   \section{The hyperbolic cosine transform}
   \subsection{\label{Sec.2.1}Connections with the Stieltjes moment problem}
The fields of real and complex numbers are denoted by
$\rbb$ and $\cbb$, respectively. Set $\nbb = \{1,2,3,
\ldots\}$ and $\zbb_+=\{0,1,2,3, \ldots\}$. If no
ambiguity arises, the characteristic function of a
subset $Y$ of a set $X$ is denoted by $\chi_Y$. We
write $\borel{X}$ for the $\sigma$-algebra of all
Borel subsets of a topological Hausdorff space $X$. A
Borel measure on $X$ is said be {\em compactly
supported} if it vanishes off a compact subset of $X$.
By a {\em Radon} measure on $X$, we mean a Borel
measure on $X$ which is finite on compact sets and
inner regular. The closed support of a Borel measure
$\mu$ on $X$ is denoted by $\supp \mu$ (it always
exists for Radon measures). In this paper we consider
mostly finite Borel measures on locally compact
Hausdorff spaces whose all open sets are
$\sigma$-compact. As a consequence, such measures are
Radon measures, meaning in particular that they are
regular (see e.g., \cite[Theorem 2.18]{rud87}). Given
a real-valued function $\psi$ on $\rbb$ (or on a
subinterval of $\rbb$) and $k\in \zbb_+,$ we denote by
$\psi^{(k)}$ the $k$th derivative of $\psi$ provided
it exists. If $k=1,2$, we also use the notation
$\psi^{\prime}$ and $\psi^{\prime\prime}$.

A sequence $\{a_n\}_{n=0}^\infty \subseteq \rbb_+$ is said to
be a {\em Stieltjes moment sequence} if there exists a Borel
measure $\mu$ on $\rbb_+$ such that
   \begin{align} \label{rep1}
a_n = \int_{\rbb_+} x^n \D \mu(x), \quad n \in \zbb_+;
   \end{align}
such $\mu$ is called a {\em representing measure} of
$\{a_n\}_{n=0}^\infty$. If $\mu$ in \eqref{rep1} is
unique, the Stieltjes moment sequence
$\{a_n\}_{n=0}^\infty$ is called {\em determinate}.
Otherwise, we call it {\em indeterminate}. It is worth
recalling that each Stieltjes moment sequence which
has a compactly supported representing measure is
determinate (cf.\ \cite{fug}). We refer the reader to
\cite{sh-tam,Akh65,fug,b-ch-r,sim98} for the
foundations of theory of moment problems.

The following lemma shows that the hyperbolic cosine
transform is analytic and that its Taylor's
coefficients at zero are related to the Stieltjes
moment problem.
   \begin{lem} \label{wkw-supp-zw}
Let $\psi\colon \rbb \to \rbb_+$ be the hyperbolic cosine
transform of a Borel measure $\nu$ on $\rbb_+$. Then
   \begin{enumerate}
   \item[(i)] $\psi$ has the power series expansion
   \begin{align} \label{cosh-b}
\psi(x) = \sum_{n=0}^{\infty} \frac{\gamma_n}{(2n)!}
\, x^{2n}, \quad x\in \rbb,
   \end{align}
where $\gamma_n=\int_{\rbb_+} u^{2n} \D \nu(u) <
\infty$ for all $n \in \zbb_+$,
   \item[(ii)] the derivative $\psi^{\prime}(x)$ of $\psi$
at $x$ is given by
   \begin{align} \label{cosh-a}
\psi^{\prime}(x) := \int_{\rbb_+} u \sinh(x u) \D
\nu(u), \quad x\in \rbb.
   \end{align}
   \end{enumerate}
Moreover, the following hold{\em :}
   \begin{enumerate}
   \item[(a)] there is only one
Borel measure $\nu$ on $\rbb_+$ satisfying
\eqref{ch-tr-1},
   \item[(b)] the sequence $\{\gamma_n\}_{n=0}^{\infty}$ in
\eqref{cosh-b} is unique and is a determinate
Stieltjes moment sequence with a representing measure
$\nu \circ \omega$, where $\omega\colon \rbb_+ \to
\rbb_+$ is given by $\omega(u)=\sqrt{u}$ for $u\in
\rbb_+$ and $(\nu \circ
\omega)(\varDelta)=\nu(\omega(\varDelta))$ for
$\varDelta \in \borel{\rbb_+}$.
   \end{enumerate}
   \end{lem}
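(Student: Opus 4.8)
The plan is to deduce everything from the defining formula \eqref{ch-tr-1} by expanding $\cosh(xu)$ in its Taylor series and justifying the interchange of sum and integral. First I would write $\cosh(xu) = \sum_{n=0}^\infty \frac{(xu)^{2n}}{(2n)!}$ and note all terms are nonnegative for $x, u \in \rbb_+$; by Tonelli's theorem the interchange is legitimate on $\rbb_+$, giving $\psi(x) = \sum_n \frac{\gamma_n}{(2n)!} x^{2n}$ with $\gamma_n := \int_{\rbb_+} u^{2n}\,\D\nu(u) \in [0,\infty]$. The finiteness $\psi(x) < \infty$ for every $x$ (in particular for a fixed $x_0 > 0$) forces each $\gamma_n < \infty$, since $\frac{\gamma_n}{(2n)!} x_0^{2n} \le \psi(x_0)$. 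Because $\psi$ is even ($\cosh$ is even) the same expansion holds for all $x \in \rbb$, and the series has infinite radius of convergence (as it converges at every real $x_0$), proving (i) and the analyticity claim. For (ii), differentiate the power series term by term inside its disc of convergence to get $\psi'(x) = \sum_{n=1}^\infty \frac{\gamma_n}{(2n-1)!} x^{2n-1}$; then recognizing $\sum_{n\ge 1} \frac{(xu)^{2n-1}}{(2n-1)!} = \sinh(xu)$ and reversing the Tonelli interchange (now for $x \ge 0$; for $x < 0$ use that both sides are odd) yields \eqref{cosh-a}.

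For the "moreover" part, statement (a): suppose $\nu_1, \nu_2$ both satisfy \eqref{ch-tr-1}. Then they produce the same coefficients $\gamma_n = \int u^{2n}\,\D\nu_j(u)$, i.e. the pushforwards $\nu_j \circ \omega$ (measures on $\rbb_+$ in the variable $t = u^2$) share all moments $\int t^n\,\D(\nu_j\circ\omega)(t) = \gamma_n$. Statement (b) then reduces to showing this common Stieltjes moment sequence $\{\gamma_n\}$ is \emph{determinate}; determinacy immediately gives $\nu_1\circ\omega = \nu_2\circ\omega$, and since $\omega(u) = \sqrt u$ is a homeomorphism of $\rbb_+$, pushing back gives $\nu_1 = \nu_2$, establishing (a) as a corollary of (b). So the crux is determinacy of $\{\gamma_n\}$.

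The main obstacle is precisely this determinacy. The representing measure $\nu\circ\omega$ need not be compactly supported (that stronger hypothesis appears only later in the paper), so one cannot simply invoke the easy "compact support $\Rightarrow$ determinate" fact. Instead I would exploit the growth control coming from $\psi(x) < \infty$ for \emph{all} $x$: the bound $\gamma_n \le (2n)!\, \psi(x_0)\, x_0^{-2n}$ for every $x_0 > 0$ shows $\limsup_n \frac{\gamma_n^{1/2n}}{(2n)!^{1/2n}} \le x_0^{-1} \to 0$, hence by Stirling $\gamma_n^{1/n}$ grows no faster than $C n^2$. One then checks a classical sufficient condition for determinacy of a Stieltjes moment sequence — for instance Carleman's condition $\sum_n \gamma_n^{-1/2n} = \infty$, which holds since $\gamma_n^{-1/2n} \gtrsim n^{-1}$ and $\sum n^{-1} = \infty$ — which guarantees the Stieltjes (equivalently Hamburger) moment problem for $\{\gamma_n\}$ is determinate. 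Finally, uniqueness of the coefficient sequence $\{\gamma_n\}$ itself in \eqref{cosh-b} is automatic from uniqueness of Taylor coefficients of the analytic function $\psi$. Assembling these pieces completes (a) and (b).
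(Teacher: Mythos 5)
Your proposal is correct and follows essentially the same route as the paper: Tonelli (monotone convergence for nonnegative series) to justify the interchange yielding \eqref{cosh-b}, term-by-term differentiation plus the oddness of $\sinh$ for \eqref{cosh-a}, the measure transport identification of $\nu\circ\omega$ as a representing measure, and determinacy via the factorial growth bound $\gamma_n\Le M(2n)!$ combined with Carleman's criterion and Stirling, from which (a) follows. No gaps.
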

   \begin{proof}
(i) \& (ii) Applying \cite[Theorem~1.27]{rud87}, we
obtain
   \allowdisplaybreaks
   \begin{align} \notag
\int_{\rbb_+} \cosh(xu) \D \nu(u) & = \int_{\rbb_+}
\sum_{n=0}^{\infty} \frac{x^{2n}}{(2n)!} u^{2n} \D
\nu(u)
   \\ \label{sosh-tr-1}
&= \sum_{n=0}^{\infty} \frac{x^{2n}}{(2n)!}
\int_{\rbb_+} u^{2n} \D \nu(u), \quad x\in \rbb.
   \end{align}
This implies \eqref{cosh-b}. Using similar reasoning
and the oddness of $\sinh,$ we get
   \begin{align} \label{gru-gru}
\int_{\rbb_+} u \sinh(xu) \D \nu(u) =
\sum_{n=0}^{\infty} \frac{\gamma_{n+1}}{(2n+1)!} \,
x^{2n+1}, \quad x\in \rbb.
   \end{align}
Term-by-term differentiation of the power series in
\eqref{cosh-b} yields
   \begin{align*}
\psi^{\prime}(x) = \sum_{n=0}^{\infty}
\frac{\gamma_{n+1}}{(2n+1)!} x^{2n+1}
\overset{\eqref{gru-gru}}= \int_{\rbb_+} u \sinh(xu) \D
\nu(u), \quad x\in \rbb.
   \end{align*}

We now prove the ``moreover'' part. Clearly, by the
uniqueness of Taylor's coefficients for power series,
the sequence $\{\gamma_n\}_{n=0}^{\infty}$ in
\eqref{cosh-b} is unique. Applying the measure
transport theorem \cite[Theorem~1.6.12]{Ash} yields
   \begin{align*}
\gamma_n=\int_{\rbb_+} u^{2n} \D \nu(u) =
\int_{\rbb_+} t^n \D \nu\circ \omega(t), \quad n\in
\zbb_+.
   \end{align*}
Therefore, $\{\gamma_n\}_{n=0}^{\infty}$ is a
Stieltjes moment sequence with a representing measure
$\nu\circ \omega$. In view of \eqref{cosh-b}, the
radius of convergence of the power series
$\sum_{n=0}^{\infty} \frac{\gamma_n}{(2n)!} \, x^{n}$
is equal to infinity, so by the Cauchy radius formula
(see \cite[Theorem~III.1.3]{Con78})
   \begin{align*}
\lim_{n\to \infty} \sqrt[n]{\frac{\gamma_n}{(2n)!}}
=0.
   \end{align*}
This implies that there exists $M\in (0,\infty)$ such
that (see Footnote~\ref{fut-1})
   \begin{align*}
\gamma_n \Le M (2n)!, \quad n\in \zbb_+.
   \end{align*}
Hence, by \cite[Proposition~1.5]{sim98} (or by
Carleman's criterion \cite[Corollary~4.5]{sim98} and
Stirling's approximation theorem
\cite[Theorem~9.7.1]{sim15}), the Stieltjes moment
sequence $\{\gamma_n\}_{n=0}^{\infty}$ is determinate
(it is also determinate as a Hamburger moment
sequence). This proves (b). It remains to justify (a).
Suppose that $\nu_1$ and $\nu_2$ are Borel measures on
$\rbb_+$ satisfying \eqref{ch-tr-1}. Then, in view of
the above discussion, the measures $\nu_1\circ \omega$
and $\nu_2\circ \omega$ are representing measures of
the determinate Stieltjes moment sequence
$\{\gamma_n\}_{n=0}^{\infty}$. Thus $\nu_1\circ \omega
= \nu_2\circ \omega$, which implies that
$\nu_1=\nu_2$. This completes the proof.
   \end{proof}
Below, we show that there is a one-to-one
correspondence between hyperbolic cosine transforms
and Stieltjes moment sequences of factorial growth at
infinity.
   \begin{thm}\label{cosh-Sti}
The identity \eqref{cosh-b} establishes a one-one
correspondence between hyperbolic cosine transforms
$\psi$ of Borel measures on $\rbb_+$ and Stieltjes
moment sequences $\{\gamma_n\}_{n=0}^{\infty}$ such
that\/\footnote{\;\label{fut-1}Note that $\lim_{n\to
\infty} \sqrt[n]{\frac{\gamma_n}{(2n)!}} =0$ if and
only if for every $\varepsilon \in (0,\infty)$ there
exists $M\in (0,\infty)$ such that $\gamma_n \Le M
\varepsilon^n (2n)!$ for all $n\in \zbb_+$.}
$\lim_{n\to \infty} \sqrt[n]{\frac{\gamma_n}{(2n)!}}
=0$. Moreover, if a Borel measure $\nu$ on $\rbb_+$
satisfies \eqref{ch-tr-1}, then $\nu \circ \omega$ is
a representing measures of
$\{\gamma_n\}_{n=0}^{\infty}$ and, vice versa, if
$\mu$ is a representing measures of
$\{\gamma_n\}_{n=0}^{\infty}$, then the measure $\nu$
on $\rbb_+$ defined by
$\nu(\varDelta)=\mu(\omega^{-1}(\varDelta))$ for
$\varDelta \in \borel{\rbb_+}$ satisfies
\eqref{ch-tr-1}, where $\omega$ and $\nu \circ \omega$
are as in Lemma~{\em \ref{wkw-supp-zw}}.
   \end{thm}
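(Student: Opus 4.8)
The plan is to read off one direction of the correspondence from Lemma~\ref{wkw-supp-zw} and to establish the reverse direction (surjectivity) by an explicit pushforward construction. First I would start from a hyperbolic cosine transform $\psi$ of a Borel measure $\nu$ on $\rbb_+$: Lemma~\ref{wkw-supp-zw}(i) gives the expansion \eqref{cosh-b} with $\gamma_n=\int_{\rbb_+} u^{2n}\D\nu(u)$, part~(b) says that $\{\gamma_n\}_{n=0}^\infty$ is a (determinate) Stieltjes moment sequence with representing measure $\nu\circ\omega$, and the radius-of-convergence argument inside the proof of that lemma (the Cauchy radius formula applied to \eqref{cosh-b}) yields $\lim_{n\to\infty}\sqrt[n]{\gamma_n/(2n)!}=0$. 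Since the coefficients of a convergent power series are unique, the assignment $\psi\mapsto\{\gamma_n\}_{n=0}^\infty$ is well defined and injective, and it maps into the claimed class of Stieltjes moment sequences; this already gives the first half of the ``moreover'' part as well.

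Next I would prove surjectivity. Given a Stieltjes moment sequence $\{\gamma_n\}_{n=0}^\infty$ with $\lim_{n\to\infty}\sqrt[n]{\gamma_n/(2n)!}=0$ and a representing measure $\mu$ of it on $\rbb_+$, I would set $\nu(\varDelta)=\mu(\omega^{-1}(\varDelta))$ for $\varDelta\in\borel{\rbb_+}$, i.e.\ $\nu$ is the image of $\mu$ under the homeomorphism $u\mapsto\sqrt{u}$; then $\nu(\rbb_+)=\mu(\rbb_+)=\gamma_0<\infty$, and the measure transport theorem \cite[Theorem~1.6.12]{Ash} gives $\int_{\rbb_+} u^{2n}\D\nu(u)=\int_{\rbb_+}\omega(t)^{2n}\D\mu(t)=\int_{\rbb_+} t^n\D\mu(t)=\gamma_n$ for all $n\in\zbb_+$. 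Because the integrands are nonnegative, the Tonelli-type computation already carried out in \eqref{sosh-tr-1} applies verbatim and shows
\[
\int_{\rbb_+}\cosh(xu)\D\nu(u)=\sum_{n=0}^{\infty}\frac{\gamma_n}{(2n)!}\,x^{2n},\quad x\in\rbb,
\]
the series being finite for every real $x$ exactly because $\lim_{n\to\infty}\sqrt[n]{\gamma_n/(2n)!}=0$ (equivalently, by Footnote~\ref{fut-1}, because $\gamma_n\Le M\varepsilon^n(2n)!$ for every $\varepsilon\in(0,\infty)$ and a suitable $M$). Thus $\psi(x):=\int_{\rbb_+}\cosh(xu)\D\nu(u)$ is a genuine hyperbolic cosine transform satisfying \eqref{cosh-b} with the prescribed $\{\gamma_n\}_{n=0}^\infty$, which proves surjectivity and, at the same time, the second half of the ``moreover'' part.

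Finally I would close the bookkeeping loop: since $\omega$ is a bijection, the maps $\nu\mapsto\nu\circ\omega$ and $\mu\mapsto(\varDelta\mapsto\mu(\omega^{-1}(\varDelta)))$ are mutually inverse on the relevant classes of measures, so the two ways of passing between a hyperbolic cosine transform and an associated representing measure are consistent; together with the uniqueness of $\nu$ in Lemma~\ref{wkw-supp-zw}(a) this also re-derives determinacy of $\{\gamma_n\}_{n=0}^\infty$, so no separate determinacy argument is needed. I expect no serious obstacle: the theorem is essentially a repackaging of Lemma~\ref{wkw-supp-zw}, and the only point requiring a little care is verifying that the cosine transform of the constructed $\nu$ converges for every $x\in\rbb$, which is precisely where the growth hypothesis $\lim_{n\to\infty}\sqrt[n]{\gamma_n/(2n)!}=0$ enters.
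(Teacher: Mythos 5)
Your proposal is correct and follows essentially the same route as the paper, whose proof is just the instruction to apply Lemma~\ref{wkw-supp-zw} and its proof together with the identity \eqref{sosh-tr-1}, noting that the latter holds whether or not the integrals are finite. You have simply written out in full the details the paper leaves implicit: the pushforward construction for surjectivity and the observation that the growth condition $\lim_{n\to\infty}\sqrt[n]{\gamma_n/(2n)!}=0$ is exactly what makes the Tonelli identity yield finiteness of $\int_{\rbb_+}\cosh(xu)\,\D\nu(u)$ for every $x\in\rbb$.
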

   \begin{proof}
Apply Lemma~\ref{wkw-supp-zw} and its proof together
with the identity \eqref{sosh-tr-1} which holds
regardless of wheth\-er the integrals $\int_{\rbb_+}
\cosh(xu)\D \nu(u)$, $x\in \rbb$, are finite or not.
   \end{proof}
   \begin{rem} \label{Taylor-gam}
It is also worth noting that, in view of
Lemma~\ref{wkw-supp-zw} and Theorem~\ref{cosh-Sti}, the
condition (i) of Lemma~\ref{wkw-supp-zw} is equivalent to
stating that the function $\psi$ is analytic,
$\psi^{(2n+1)}(0)=0$ for all $n\in \zbb_+$,
$\{\psi^{(2n)}(0)\}_{n=0}^{\infty}$ is a Stieltjes moment
sequence and $\lim_{n\to \infty}
\sqrt[n]{\frac{\psi^{(2n)}(0)}{(2n)!}} =0$. This is because
$\psi^{(2n)}(0)=\gamma_n$ for $n\in \zbb_+.$
   \hfill{$\diamondsuit$}
   \end{rem}
We now give an example of the hyperbolic cosine
transform of a Borel measure on $\rbb_+$ which is not
compactly supported.
   \begin{exa}
Let $\{\gamma_n\}_{n=0}^{\infty}$ be the sequence
defined by $\gamma_n = n!$ for $n\in \zbb_+$. Since
$\frac{n!}{(2n)!} \Le \frac{1}{n^n}$ for all $n\in
\nbb$, we see that $\lim_{n\to \infty}
\sqrt[n]{\frac{\gamma_n}{(2n)!}} =0$. It is well
known~that
   \begin{align*}
\gamma_n = \int_{\rbb_+} x^n \E^{-x}\D x,\quad n\in
\zbb_+.
   \end{align*}
This means that $\{\gamma_n\}_{n=0}^{\infty}$ is a
Stieltjes moment sequence with the representing
measure $\mu$ given by $\mu(\varDelta) =
\int_{\varDelta} \E^{-x}\D x$ for $\varDelta \in
\borel{\rbb_+}$. Then the Borel measure $\nu$ on
$\rbb_+$ defined as in Theorem~\ref{cosh-Sti} takes
the form
   \begin{align*}
\nu(\varDelta) = \mu(\omega^{-1}(\varDelta)) =
\int_{\rbb_+} \chi_{\varDelta}(\sqrt{x}\,) \E^{-x} \D
x = 2 \int_{\varDelta} u \E^{-u^2} \D u, \quad
\varDelta \in \borel{\rbb_+}.
   \end{align*}
A routine computation shows that the hyperbolic cosine
transform $\psi$ of the measure $\nu$ is of the form
(see \cite[integral 4.\ in 3.562 and integral 1.\ in
8.250]{Gr-Ryz07})
   \begin{align*}
\psi(x)=\int_{\rbb_+} \cosh(xu) \D \nu(u) & = 2
\int_{\rbb_+} u \cosh(xu)\E^{-u^2} \D u
   \\
& = 1 + \frac{\sqrt{\pi}}{2} x \,
\varPhi\Big(\frac{x}{2}\Big) \E^{\frac{x^2}{4}}, \quad
x \in \rbb,
   \end{align*}
where $\varPhi\colon \rbb \to \rbb$ is the so-called
error function (which is odd) defined by
   \begin{align*} \tag*{$\diamondsuit$}
\varPhi(x) = \frac{2}{\sqrt{\pi}} \int_{0}^{x}
\E^{-t^2} \D t, \quad x\in \rbb.
   \end{align*}
   \end{exa}
   \subsection{\label{Sec.2.2}Positive definite functions on
$*$-semigroups} Let $(S,+,*)$ be an abelian
$*$-semigroup with a commutative addition $+$,
involution $*$ and the zero element denoted by $0$. We
call a function $\psi\colon S \to \cbb$ {\em positive
definite} if
   \begin{align} \label{eq1}
\sum_{j,k=1}^n \psi(s_j^* + s_k) \lambda_j
\bar\lambda_k \Ge 0, \quad n \in \nbb, \,
\{s_j\}_{j=1}^n \subseteq S, \,
\{\lambda_j\}_{j=1}^n\subseteq \cbb.
   \end{align}
If $\psi(s^*)= \overline{\psi(s)}$ for all $s\in S$
and the inequality in \eqref{eq1} holds for all $n \in
\nbb$, $\{s_j\}_{j=1}^n \subseteq S$ and
$\{\lambda_j\}_{j=1}^n\subseteq \cbb$ such that
$\sum_{j=1}^n \lambda_j=0$, then $\psi$ is called {\em
conditionally positive definite}.

In Bernstein's terminology (see
\cite[Sect.~15]{Ber28}), positive definite functions
on the abelian $*$-semigroup $(\rbb,+,x^*=x)$ are
called {\em exponentially convex}. Note that if
$\psi\colon \rbb \to \cbb$ is exponentially convex,
then either $\psi =0$ or $\psi(\rbb) \subseteq
(0,\infty)$. Indeed, substituting $n=1$, $s_1=\frac
x2$ and $\lambda_1=1$ into \eqref{eq1} gives
$\psi(x)\Ge 0$ for all $x\in \rbb$, so $\psi(\rbb)
\subseteq \rbb_+$. If $\psi(x_0)=0$ for some $x_0\in
\rbb$, then \cite[p.\ 69, {\bf 1.8}]{b-ch-r} yields
   \begin{align*}
0 \Le \psi(x)=\psi\Big(\Big(\frac {x_0}2\Big)^* +
\Big(x-\frac {x_0}2 \Big)\Big) \Le \sqrt{\psi(x_0)}
\sqrt{\psi(2x-x_0)} = 0, \quad x \in \rbb,
   \end{align*}
which shows that $\psi=0$.

We need the following characterizations of
exponentially convex functions.
   \begin{lem} \label{kiedy-cosub-cosh}
Let $\psi\colon \rbb \to \rbb_+$ be a function and
$\{\xi_m\}_{m=1}^{\infty} \subseteq \rbb$ be a
sequence with $\lim_{m\to \infty} \xi_m=-\infty$. Then
the following conditions are equivalent{\em :}
   \begin{enumerate}
   \item[(i)] $\psi$ is exponentially convex,
   \item[(ii)] the function
$\rbb_+ \ni x \mapsto \psi(\xi_m + x)\in \rbb_+$ is
positive definite on the $*$-semigroup
$(\rbb_+,+,x^*=x)$ for every $m\in \nbb$.
   \end{enumerate}
Moreover, if $\psi$ is continuous and
$\{s_k\}_{k=1}^{\infty} \subseteq \nbb$, then {\em
(i)} is equivalent to each of the following
conditions{\em :}
   \begin{enumerate}
   \item[(iii)] the function $\zbb_+ \ni n
\mapsto \psi\big(\xi + nt\big) \in \rbb_+$ is positive
definite on the $*$-semi\-group $(\zbb_+,+,n^*=n)$ for
all $\xi,t \in \rbb$,
   \item[(iv)] the function $\zbb_+ \ni n
\mapsto \psi\big(\xi_m + \frac{n}{k\cdot s_k}\big) \in
\rbb_+$ is positive definite on the $*$-semi\-group
$(\zbb_+,+,n^*=n)$ for all $k, m \in \nbb$.
   \end{enumerate}
   \end{lem}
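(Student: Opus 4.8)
The plan is to prove the first equivalence via $(i)\Rightarrow(ii)\Rightarrow(i)$ and the ``moreover'' part via the cycle $(i)\Rightarrow(iii)\Rightarrow(iv)\Rightarrow(i)$. The implications \emph{into} the semigroup conditions $(ii)$, $(iii)$, $(iv)$ should be purely algebraic manipulations of the defining inequality \eqref{eq1}; the implications \emph{back to} $(i)$ are where the content lies, the one from $(ii)$ relying only on $\xi_m\to-\infty$ and the one from $(iv)$ relying additionally on continuity of $\psi$.

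First I would dispose of the implications from $(i)$. Assume $\psi$ is exponentially convex. For $(i)\Rightarrow(ii)$, fix $m\in\nbb$; for $x_1,\dots,x_n\in\rbb_+$ and $\lambda_1,\dots,\lambda_n\in\cbb$ the identity $\xi_m+x_j+x_k=(\xi_m/2+x_j)+(\xi_m/2+x_k)$ shows that $\sum_{j,k}\psi(\xi_m+x_j+x_k)\lambda_j\bar\lambda_k$ is an instance of \eqref{eq1} for $\psi$ on $(\rbb,+,x^*=x)$ with the points $\xi_m/2+x_j\in\rbb$, hence is nonnegative. The implication $(i)\Rightarrow(iii)$ is the same computation with $\xi+(n_j+n_k)t=(\xi/2+n_jt)+(\xi/2+n_kt)$, and $(iii)\Rightarrow(iv)$ is the mere specialization $\xi=\xi_m$, $t=1/(k\cdot s_k)$. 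None of these uses continuity.

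Next, $(ii)\Rightarrow(i)$: given $y_1,\dots,y_n\in\rbb$ and $\lambda_1,\dots,\lambda_n\in\cbb$, use $\xi_m\to-\infty$ to choose $m$ with $\xi_m<2\min_jy_j$, so that $x_j:=y_j-\xi_m/2>0$ for every $j$; then $y_j+y_k=\xi_m+x_j+x_k$ and positive definiteness of $x\mapsto\psi(\xi_m+x)$ on $(\rbb_+,+,x^*=x)$ gives $\sum_{j,k}\psi(y_j+y_k)\lambda_j\bar\lambda_k\Ge 0$. This already proves $(i)\Leftrightarrow(ii)$ with no regularity hypothesis on $\psi$.

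It remains to close the cycle with $(iv)\Rightarrow(i)$, now using that $\psi$ is continuous; the idea is to approximate arbitrary real arguments by points of an arithmetic progression with vanishing step. Given $x_1,\dots,x_p\in\rbb$ and $\lambda_1,\dots,\lambda_p\in\cbb$, fix $m$ with $\xi_m<2\min_jx_j$ and set $c=\xi_m/2$, so $x_j-c>0$ for all $j$. Put $t_k=1/(k\cdot s_k)$, note $0<t_k\Le 1/k\to0$ since $s_k\in\nbb$, and let $m_j^{(k)}=\lfloor(x_j-c)/t_k\rfloor\in\zbb_+$; then $|m_j^{(k)}t_k-(x_j-c)|<t_k\to0$, hence $\xi_m+(m_j^{(k)}+m_l^{(k)})t_k=(c+m_j^{(k)}t_k)+(c+m_l^{(k)}t_k)\to x_j+x_l$ for all $j,l$. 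By $(iv)$ applied with this $m$, for every $k$ the finite sum $\sum_{j,l}\psi\bigl(\xi_m+(m_j^{(k)}+m_l^{(k)})t_k\bigr)\lambda_j\bar\lambda_l$ is $\Ge 0$, and letting $k\to\infty$ and using continuity of $\psi$ yields $\sum_{j,l}\psi(x_j+x_l)\lambda_j\bar\lambda_l\Ge 0$, i.e.\ $\psi$ is exponentially convex; since $(iii)\Rightarrow(iv)$, this also gives $(iii)\Rightarrow(i)$. I expect the only subtle point to be the uniform sign control in this last step: one must fix the base point $\xi_m$ once and for all — chosen large enough in modulus that \emph{every} approximating exponent $m_j^{(k)}$ is a nonnegative integer — and shrink only the step $t_k$, so that each quadratic form along the approximation is genuinely covered by positive definiteness on $(\zbb_+,+,n^*=n)$ before passing to the limit. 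The sequence $\{s_k\}$ matters only insofar as it still forces $t_k\to0$.
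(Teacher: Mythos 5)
Your proof is correct and follows essentially the same strategy as the paper: the implications out of (i) are the same algebraic splitting of the argument, the return from (ii) is the same shift by a sufficiently negative $\xi_m$, and the hard step (iv)$\Rightarrow$(i) rests on the same mechanism as the paper's (iv)$\Rightarrow$(ii), namely that the lattices $\tfrac{1}{k\cdot s_k}\zbb_+$ eventually reach (the paper: exactly, through $\qbb_+$ with a common denominator of the form $k\cdot s_k$; you: approximately, via floors with step $t_k\to 0$) any finite set of shifted arguments, after which continuity of $\psi$ passes the nonnegativity of the quadratic forms to the limit. Your version merely collapses the paper's two-stage route through $\qbb_+$ and condition (ii) into a single limit, which is a cosmetic difference.
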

   \begin{proof}
That the conditions (i) and (ii) are equivalent can
easily be verified. Since the implications
(i)$\Rightarrow$(iii) and (iii)$\Rightarrow$(iv) are
obvious, it suffices to prove that (iv) implies (ii)
whenever $\psi$ is continuous. Denote by $\qbb_+$ the
additive semigroup of all nonnegative rational
numbers. Fix $m\in \nbb$. Let
$\{q_j\}_{j=1}^l\subseteq \qbb_+$ be a finite
sequence. Then there exists $k\in \nbb$ and a sequence
$\{p_j\}_{j=1}^l \subseteq \zbb_+$ such that
$q_j=\frac {p_j}{k\cdot s_k}$ for $j=1, \ldots, l$.
This implies that
   \begin{align*}
\sum_{i,j=1}^{l} \psi(\xi_{m} + (q_i^* + q_j))
\lambda_i \bar \lambda_j = \sum_{i,j=1}^{l}
\psi\Big(\xi_{m} + \frac{p_i^* + p_j}{k\cdot
s_k} \Big) \lambda_i \bar \lambda_j \Ge 0, \quad
\{\lambda_j\}_{j=1}^l\subseteq \cbb.
   \end{align*}
Hence, the function $\qbb_+ \ni x \mapsto \psi(\xi_{m}
+ x)\in \rbb_+$ is positive definite on the
$*$-semigroup $(\qbb_+,+,x^*=x)$ for every $m\in
\nbb$. By the continuity of $\psi$, we deduce that the
function $\rbb_+ \ni x \mapsto \psi(\xi_{m} + x)\in
\rbb_+$ is positive definite on the $*$-semigroup
$(\rbb_+,+,x^*=x)$ for every $m\in \nbb$. This
completes the proof.
   \end{proof}
Continuous exponentially convex functions can be
described as follows (see \cite[Theorem~5.5.4]{Akh65}
and its proof; see also \cite[Theorem~21]{Widd46}).
   \begin{thm}[The representation theorem]\label{Bernst}
A function $\psi\colon \rbb \to \rbb_+$ is of the form
   \begin{align} \label{juden}
\psi(x) = \int_{\rbb} \E^{xu} \D\sigma(u), \quad x \in
\rbb,
   \end{align}
where $\sigma$ is a Borel measure on $\rbb$, if and
only if $\psi$ is continuous and exponentially convex.
Moreover, the Borel measure $\sigma$ satisfying {\em
\eqref{juden}} is unique.
   \end{thm}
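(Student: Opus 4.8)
The plan is to prove both implications separately, using Lemma~\ref{kiedy-cosub-cosh} to reduce the hard direction to a one-dimensional moment problem on the half-line.

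\emph{Sufficiency is easy.} If $\psi$ is of the form \eqref{juden}, then for $x\in\rbb$ the substitution $n=1$, $s_1=x/2$, $\lambda_1=1$ already shows $\psi(x)=\int_{\rbb}\E^{xu}\,\D\sigma(u)\Ge 0$, and for any finite system $\{x_j\}_{j=1}^n\subseteq\rbb$, $\{\lambda_j\}_{j=1}^n\subseteq\cbb$ one computes
\[
\sum_{j,k=1}^n \psi(x_j+x_k)\lambda_j\bar\lambda_k
=\int_{\rbb}\Bigl|\sum_{j=1}^n \lambda_j \E^{x_j u/2}\,\E^{x_k u/2}\Bigr|\;\text{-type expression}
\]
— more precisely $=\int_{\rbb}\bigl(\sum_j \lambda_j \E^{x_j u/2}\bigr)\overline{\bigl(\sum_k \lambda_k \E^{x_k u/2}\bigr)}\,\D\sigma(u)\Ge 0$, so $\psi$ is exponentially convex. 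Continuity of $\psi$ follows from dominated convergence together with finiteness of $\sigma$ (which is forced: taking $x=0$ gives $\sigma(\rbb)=\psi(0)<\infty$), after checking that $\sigma$ is actually a \emph{finite} measure — here one uses that $\E^{xu}+\E^{-xu}=2\cosh(xu)\Ge \E^{|x||u|}$ is $\sigma$-integrable for each fixed $x$, so $\sigma$ has exponential moments of all orders and in particular dominated convergence applies on compact $x$-intervals.

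\emph{Necessity is the heart of the matter.} Assume $\psi\colon\rbb\to\rbb_+$ is continuous and exponentially convex; we may assume $\psi\ne 0$. Fix a sequence $\xi_m\to-\infty$, say $\xi_m=-m$. By Lemma~\ref{kiedy-cosub-cosh}, for each $m$ the function $\varphi_m\colon\rbb_+\ni x\mapsto\psi(\xi_m+x)$ is positive definite on the $*$-semigroup $(\rbb_+,+,x^*=x)$ and is continuous. By the classical integral representation of continuous positive definite functions on $(\rbb_+,+,\mathrm{id})$ — equivalently, the Hausdorff/Bernstein--Widder machinery identifying such functions with Laplace transforms — there is a unique Borel measure $\tau_m$ on $\rbb$ (in fact one can arrange it on the appropriate semicharacter space; for $(\rbb_+,+,x^*=x)$ the bounded semicharacters are $x\mapsto\E^{xu}$, $u\le 0$, and the unbounded ones $u>0$ enter because $\varphi_m$ need not be bounded) with
\[
\psi(\xi_m+x)=\int_{\rbb}\E^{xu}\,\D\tau_m(u),\qquad x\in\rbb_+.
\]
Define $\D\sigma_m(u)=\E^{-\xi_m u}\,\D\tau_m(u)=\E^{mu}\,\D\tau_m(u)$; then $\psi(x)=\int_{\rbb}\E^{xu}\,\D\sigma_m(u)$ for all $x\Ge\xi_m$. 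The key consistency step is that, by uniqueness of the representing measure on the overlap $x\Ge\xi_{m'}$ for $m'<m$ (which in turn follows from the determinacy built into our situation, exactly as in Lemma~\ref{wkw-supp-zw}(b) via the factorial-growth bound), the measures $\sigma_m$ agree: $\sigma_m=\sigma_{m'}$ for all $m,m'$. Call the common measure $\sigma$. Then $\psi(x)=\int_{\rbb}\E^{xu}\,\D\sigma(u)$ holds for every $x\in\bigcup_m[\xi_m,\infty)=\rbb$, which is \eqref{juden}.

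\emph{Uniqueness.} Suppose $\sigma_1,\sigma_2$ both satisfy \eqref{juden}. Restricting to $x\Ge 0$ and expanding $\E^{xu}$, or more directly using that the two-sided Laplace transform $x\mapsto\int_{\rbb}\E^{xu}\,\D\sigma_i(u)$ agreeing on an interval forces equality of the measures (standard, via analyticity of the Laplace transform in a strip together with the injectivity of the bilateral Laplace transform), we get $\sigma_1=\sigma_2$.

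\emph{Expected main obstacle.} The delicate point is the passage from positive definiteness on $(\rbb_+,+,x^*=x)$ to the integral representation with a \emph{genuine Borel measure on $\rbb$} (not merely on the compact space of bounded semicharacters), because $\varphi_m$ is typically unbounded, so the general theory of positive definite functions on $*$-semigroups does not directly hand us a representing measure — one must invoke a Nussbaum/Hamburger-type moment argument and control the growth. Equivalently, one can bypass this by reducing to Lemma~\ref{wkw-supp-zw}/Theorem~\ref{cosh-Sti}: continuity plus exponential convexity of $\psi$ forces, via Lemma~\ref{kiedy-cosub-cosh}(iii)--(iv) and the determinacy estimate, that the relevant moment sequences are determinate Stieltjes (equivalently Hamburger) sequences, and then the uniqueness of representing measures glues the pieces together. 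Making this gluing rigorous — in particular verifying that the local representing measures are literally the restrictions of a single $\sigma$ — is where the real work lies; everything else is routine.
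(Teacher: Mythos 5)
First, a point of reference: the paper does not prove Theorem~\ref{Bernst} at all; it is quoted from \cite[Theorem~5.5.4]{Akh65} and \cite[Theorem~21]{Widd46}. Your sufficiency direction and your uniqueness argument are essentially sound (modulo the slip $\E^{(x_j+x_k)u}=\E^{x_ju}\,\E^{x_ku}$ rather than $\E^{x_ju/2}\,\E^{x_ku/2}$, and modulo actually writing out the determinacy step in the gluing: from $\int_{\rbb}\E^{|u|}\D\sigma(u)\Le\psi(1)+\psi(-1)<\infty$ one gets $\int_{\rbb}|u|^n\D\sigma(u)\Le n!\,C$, so the relevant Hamburger moment sequences are determinate and the local representing measures must coincide).

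The genuine gap is the step you yourself flag as the ``main obstacle'', and flagging it does not discharge it. Your Step 3 asserts that each continuous, generally \emph{unbounded}, positive definite function $\varphi_m$ on $(\rbb_+,+,x^*=x)$ admits a representing measure $\tau_m$ on $\rbb$ against the semicharacters $x\mapsto\E^{xu}$. The ``Bernstein--Widder/Hausdorff machinery'' available in citable form, e.g.\ \cite[Corollary~4.4.5]{b-ch-r}, applies only to \emph{bounded} positive definite functions and yields a measure on the bounded semicharacters; for unbounded $\varphi_m$ the half-line representation you need is precisely the Widder--Akhiezer theorem being proved (the passage between the half-line and the whole line being the trivial shift you perform), so as written the argument is circular. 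To close it non-circularly you would need an independent input --- for instance the perfectness of the $*$-semigroup $(\qbb_+,+,x^*=x)$ from \cite[Ch.~6]{b-ch-r} combined with continuity, or a Hamburger-moment/Nussbaum argument with explicit growth control --- and you name this possibility without carrying it out. The instructive comparison is Appendix~A of the paper: there the same shift-and-glue strategy is made rigorous for Theorem~\ref{maintheorem} precisely because the extra growth hypothesis (ii-d) lets one normalize $\psi(x-n)\E^{-b(x-n)}$ to a \emph{bounded} continuous positive definite function and then legitimately invoke \cite[Corollary~4.4.5]{b-ch-r}. No such normalization exists for a general continuous exponentially convex $\psi$ (e.g.\ $\psi(x)=\E^{x^2}$ of Example~\ref{invdid}), which is exactly why your Step 3 cannot be reduced to the bounded theory and remains an unproved restatement of the theorem itself.
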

The following theorem, which is essentially due to
Mehta and Kumar (see \cite[Sect.~2]{Me-Kum76}), is a
special case of the BMV conjecture. Their proof relies
heavily on the idea of using a perturbation series.
The BMV conjecture itself was proven by Stahl in
\cite{Sta13}. Stahl's proof is based on brilliant
considerations related to Riemann surfaces of
algebraic functions. Recently, Katsnelson provided a
purely ``matrix'' proof of the BMV conjecture for the
case of $2 \times 2$ matrices (see \cite{Kat17}).
   \begin{thm}[{\cite[Theorem~1]{Kat16}}] \label{Katz}
Let $\xi \in (0,\infty)$ and $\eta\in \rbb_+$. Then
the function $\varphi_{\xi\eta}\colon \rbb \to \rbb_+$
defined by
   \begin{align*}
\varphi_{\xi\eta}(x) = \cosh\big(\sqrt{\xi x^2 +
\eta}\/\big), \quad x\in \rbb,
   \end{align*}
is exponentially convex.
   \end{thm}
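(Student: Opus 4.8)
The plan is to realize $\varphi_{\xi\eta}$ as a pointwise limit of exponentially convex functions by expanding in powers of $\eta$; this is, in spirit, the Mehta--Kumar perturbation series. Since $\varphi_{\xi\eta}(x)=\varphi_{1\eta}(\sqrt\xi\,x)$ and exponential convexity is obviously inherited under $x\mapsto cx$, we may assume $\xi=1$. Put $h(z)=\cosh(\sqrt z)=\sum_{n=0}^{\infty}\frac{z^{n}}{(2n)!}$; being a power series of infinite radius of convergence, $h$ is entire, so expanding it about the point $x^{2}$ gives
\[
\varphi_{1\eta}(x)=h(x^{2}+\eta)=\sum_{m=0}^{\infty}\frac{\eta^{m}}{m!}\,g_{m}(x),
\qquad g_{m}(x):=h^{(m)}(x^{2}),\quad x\in\rbb,
\]
the series converging for every $x\in\rbb$ and $\eta\in\rbb_{+}$, with all summands nonnegative (the Taylor coefficients of $h^{(m)}$ are nonnegative and $\eta\in\rbb_{+}$). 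If each $g_{m}$ is exponentially convex, then so is each partial sum $\sum_{m=0}^{M}\frac{\eta^{m}}{m!}g_{m}$, being a finite nonnegative combination of exponentially convex functions, and the inequalities \eqref{eq1} survive the passage $M\to\infty$; hence $\varphi_{1\eta}$, and with it $\varphi_{\xi\eta}$, is exponentially convex.

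It remains to show that every $g_{m}$ is exponentially convex, which I would do by exhibiting it as a hyperbolic cosine transform and appealing to Theorem~\ref{cosh-Sti}. From $h^{(m)}(z)=\sum_{j=0}^{\infty}\frac{(j+m)!}{j!\,(2j+2m)!}z^{j}$ we read off $g_{m}(x)=\sum_{j=0}^{\infty}\frac{\gamma_{j}}{(2j)!}x^{2j}$ with $\gamma_{j}=\frac{(2j)!\,(j+m)!}{j!\,(2j+2m)!}$. The crux is the identity
\[
\frac{(2j)!\,(j+m)!}{j!\,(2j+2m)!}=\frac{1}{4^{m}}\cdot\frac{\Gamma(j+\frac12)}{\Gamma(j+m+\frac12)},\qquad j\in\zbb_{+},
\]
which follows from Legendre's duplication formula; via the Beta integral this gives, for $m\Ge1$,
\[
\gamma_{j}=\int_{0}^{1}t^{j}\,\D\rho_{m}(t),\qquad
\D\rho_{m}(t)=\frac{(1-t)^{m-1}}{4^{m}\,(m-1)!\,\sqrt t}\,\D t,
\]
a finite positive measure on $(0,1)$ (and $\gamma_{j}=\int_{0}^{1}t^{j}\,\D\delta_{1}(t)$ when $m=0$). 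Thus $\{\gamma_{j}\}_{j=0}^{\infty}$ is a Stieltjes moment sequence, and since $\gamma_{j}\Le\gamma_{0}<\infty$ for all $j$ while $\sqrt[j]{(2j)!}\to\infty$, we have $\lim_{j}\sqrt[j]{\gamma_{j}/(2j)!}=0$; by Theorem~\ref{cosh-Sti} there is a Borel measure $\nu_{m}$ on $\rbb_{+}$, with $\supp\nu_{m}\subseteq[0,1]$, such that $g_{m}(x)=\int_{\rbb_{+}}\cosh(xu)\,\D\nu_{m}(u)$. Finally, any hyperbolic cosine transform is exponentially convex: writing $\int_{\rbb_{+}}\cosh(xu)\,\D\nu_{m}(u)=\int_{\rbb}\E^{xu}\,\D\sigma_{m}(u)$ with $\sigma_{m}$ the symmetrization of $\nu_{m}$ (viewed as a Borel measure on $\rbb$), for every $\{s_{i}\}_{i=1}^{n}\subseteq\rbb$ and $\{\lambda_{i}\}_{i=1}^{n}\subseteq\cbb$ one gets $\sum_{i,k=1}^{n}g_{m}(s_{i}+s_{k})\lambda_{i}\bar\lambda_{k}=\int_{\rbb}|\sum_{i=1}^{n}\E^{s_{i}u}\lambda_{i}|^{2}\,\D\sigma_{m}(u)\Ge0$.

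The step I expect to be the main obstacle is the middle one — verifying that the coefficient sequence $\{\gamma_{j}\}_{j}$ attached to $g_{m}$ is a Stieltjes moment sequence. The point is that although no single monomial $x\mapsto x^{2k}$ is exponentially convex, the particular combination of monomials occurring in $g_{m}$ is, and the duplication-formula identity above makes this transparent by producing the representing measure $\rho_{m}$ in closed form. Everything else is routine: the analyticity of $h$ and the convergence of its Taylor series about $x^{2}$ for all $\eta$, the nonnegativity of the summands, the stability of exponential convexity under nonnegative combinations and pointwise limits, and the elementary implication ``hyperbolic cosine transform $\Rightarrow$ exponentially convex.'' One can instead sidestep the Gamma-function calculus by writing $\gamma_{j}=2^{-m}\prod_{i=1}^{m}(2j+2i-1)^{-1}$, a finite entrywise product of the Stieltjes moment sequences $j\mapsto(2j+2i-1)^{-1}$, and invoking the fact that entrywise products of Stieltjes moment sequences are again Stieltjes moment sequences.
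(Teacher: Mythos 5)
The paper itself offers no proof of Theorem~\ref{Katz}: it is imported verbatim from \cite[Theorem~1]{Kat16}, with the remark that it is essentially due to Mehta and Kumar and is a special case of the BMV conjecture, so there is no internal argument to compare yours against. Your proof is correct and makes this key external input self-contained, which is a genuine gain. I checked the critical points: the reduction to $\xi=1$ is legitimate because exponential convexity is preserved under $x\mapsto cx$; the Taylor expansion $h(x^2+\eta)=\sum_{m\Ge 0}\frac{\eta^m}{m!}h^{(m)}(x^2)$ of the entire function $h(z)=\cosh\sqrt z$ converges everywhere with nonnegative summands; the inequalities \eqref{eq1} pass to finite nonnegative combinations and to pointwise limits; the coefficient identity $\frac{(2j)!\,(j+m)!}{j!\,(2j+2m)!}=4^{-m}\,\Gamma(j+\frac12)/\Gamma(j+m+\frac12)=2^{-m}\prod_{i=1}^{m}(2j+2i-1)^{-1}$ is right (for $m=1$ it gives $\gamma_0=\frac12$, $\gamma_1=\frac16$, consistent with $h'(x^2)=\frac{\sinh x}{2x}=\frac12\int_0^1\cosh(xu)\,\D u$, and your Beta-integral measure $\rho_1$ pushes forward under $u\mapsto\sqrt u$ to half of Lebesgue measure on $[0,1]$, as it should); the resulting $\{\gamma_j\}_{j=0}^{\infty}$ is a Stieltjes moment sequence with representing measure on $[0,1]$ and satisfies $\lim_j\sqrt[j]{\gamma_j/(2j)!}=0$, so Theorem~\ref{cosh-Sti} applies, and since that theorem is proved before and independently of Theorem~\ref{Katz} there is no circularity; finally, your symmetrization argument re-proves ``hyperbolic cosine transform $\Rightarrow$ exponentially convex'' without appealing to Theorem~\ref{mmainth}. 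In effect you have reconstructed the Mehta--Kumar perturbation series rigorously, identifying each coefficient function $g_m=h^{(m)}(\cdot^2)$ explicitly as a hyperbolic cosine transform via a closed-form representing measure; what the citation route buys instead is only the conceptual link to Stahl's theorem on the BMV conjecture.
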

Note that if $\xi > 0$ and $\eta\Ge 0$, then the function
$\varphi_{\xi\eta}$ is of exponential growth (see
Theorem~\ref{maintheorem}) and the function $x \mapsto
\sqrt{\xi x^2 + \eta}$ is not exponentially convex.
   \subsection{\label{Sec.3}Characterizations of the hyperbolic cosine transform} In
this section we give necessary and sufficient
conditions for a positive-valued function on the real
line $\rbb$ to be the hyperbolic cosine transform of a
Borel measure on $\rbb_+$.
   \begin{thm} \label{mmainth}
For $\psi\colon \rbb \to \rbb_+$, the following
conditions are equivalent{\em :}
   \begin{enumerate}
   \item[(i)] $\psi$ is the hyperbolic
cosine transform of a Borel measure $\nu$ on $\rbb_+$,
   \item[(ii)] $\psi$ has the following properties{\em :}
   \begin{enumerate}
   \item[(ii-a)] $\psi$ is exponentially convex,
   \item[(ii-b)] $\psi$ is continuous,
   \item[(ii-c)] $\psi(x)=\psi(-x)$ for every $x\in
\rbb$.
   \end{enumerate}
   \end{enumerate}
Moreover, if {\em (ii)} holds, then there exists only
one Borel measure $\nu$ on $\rbb_+$ satisfying
\eqref{ch-tr-1}, the sequence
$\{\gamma_n\}_{n=0}^{\infty}$ in \eqref{cosh-b} is
unique and is a determinate Stieltjes moment sequence
with a representing measure $\nu \circ \omega$, where
$\omega$ and $\nu \circ \omega$ are as in Lemma~ {\em
\ref{wkw-supp-zw}}, and $($with the convention that
$\sup\supp \nu=0$ if $\supp{\nu}=\varnothing)$
   \begin{gather} \label{coshtr3new}
\sup\supp \nu = \lim_{n\to\infty} \gamma_{n}^{1/2n}.
   \end{gather}
   \end{thm}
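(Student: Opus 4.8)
The plan is to prove the equivalence (i)$\Leftrightarrow$(ii) and then the ``moreover'' part. The implication (i)$\Rightarrow$(ii) is the easy direction: if $\psi(x)=\int_{\rbb_+}\cosh(xu)\D\nu(u)$, then evenness (ii-c) is immediate from $\cosh$ being even, continuity (ii-b) follows from Lemma~\ref{wkw-supp-zw}(i) (the power series \eqref{cosh-b} has infinite radius of convergence), and exponential convexity (ii-a) follows by writing $\cosh(xu)=\frac12(\E^{xu}+\E^{-xu})$, so that $\psi(x)=\int_{\rbb}\E^{xu}\D\sigma(u)$ where $\sigma$ is the pushforward of $\frac12\nu$ under $u\mapsto\pm u$; each function $x\mapsto\E^{xu}$ is exponentially convex (a character of $(\rbb,+,x^*=x)$), and a mixture/integral of positive definite functions is positive definite.

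For the harder direction (ii)$\Rightarrow$(i), I would invoke Theorem~\ref{Bernst}: continuity and exponential convexity give a unique Borel measure $\sigma$ on $\rbb$ with $\psi(x)=\int_{\rbb}\E^{xu}\D\sigma(u)$. The evenness hypothesis (ii-c) then forces $\sigma$ to be symmetric under $u\mapsto -u$: indeed $\int\E^{xu}\D\sigma(u)=\int\E^{-xu}\D\sigma(u)=\int\E^{xu}\D\tilde\sigma(u)$ where $\tilde\sigma$ is the reflection of $\sigma$, and uniqueness in Theorem~\ref{Bernst} gives $\sigma=\tilde\sigma$. Splitting $\rbb=(-\infty,0)\cup\{0\}\cup(0,\infty)$ and folding the negative part onto the positive axis, one gets $\psi(x)=\int_{\rbb_+}\cosh(xu)\D\nu(u)$ where $\nu$ is $2$ times the restriction of $\sigma$ to $(0,\infty)$ transported by $u\mapsto|u|$, plus $\sigma(\{0\})$ times the unit mass at $0$ (so that $\cosh(x\cdot 0)=1$ is accounted for). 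This exhibits $\psi$ as a hyperbolic cosine transform, and $\nu(\rbb_+)=\sigma(\rbb)=\psi(0)<\infty$.

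The uniqueness of $\nu$ and the assertions about $\{\gamma_n\}$ being a determinate Stieltjes moment sequence with representing measure $\nu\circ\omega$ are then just Lemma~\ref{wkw-supp-zw}(a) and (b) applied to the $\nu$ just constructed. It remains to prove the support formula \eqref{coshtr3new}. Here I would use part (b): $\nu\circ\omega$ represents $\{\gamma_n\}$, so $\gamma_n=\int_{\rbb_+}t^n\D(\nu\circ\omega)(t)$, and since $\supp(\nu\circ\omega)=\omega^{-1}(\supp\nu)=\{t^2:t\in\supp\nu\cap\rbb_+\}$ one has $\sup\supp(\nu\circ\omega)=(\sup\supp\nu)^2$. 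For a compactly supported (hence determinate) positive measure $\mu$ on $\rbb_+$ with moments $a_n$, the standard fact $\lim_n a_n^{1/n}=\sup\supp\mu$ (an $L^\infty$-norm/Cauchy–Hadamard argument: $a_n^{1/n}=\|t\|_{L^n(\mu)}\to\|t\|_{L^\infty(\mu)}$) gives $\lim_n\gamma_n^{1/n}=\sup\supp(\nu\circ\omega)=(\sup\supp\nu)^2$, whence $\lim_n\gamma_n^{1/2n}=\sup\supp\nu$. When $\supp\nu$ is \emph{not} compact the right-hand side of this limit is $+\infty$ and $\gamma_n^{1/2n}\to\infty$ as well by the same monotone convergence of $L^n$ norms, so the formula persists with the stated conventions; the case $\nu=0$ is the trivial $\gamma_n\equiv 0$.

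The main obstacle I anticipate is purely bookkeeping: correctly handling the atom of $\sigma$ at $0$ when folding, and making the support formula watertight in the non-compactly-supported (indeterminate) case, where one cannot quote determinacy and must instead argue directly that $\gamma_n^{1/n}\to\sup\supp(\nu\circ\omega)$ for an arbitrary finite measure via $\|\cdot\|_{L^n}\uparrow\|\cdot\|_{L^\infty}$ (valid regardless of finiteness of the essential sup, since the measure is finite). No deep new idea is needed beyond Theorems~\ref{Bernst} and the moment-growth lemma already in hand.
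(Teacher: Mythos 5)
Your proposal is correct and follows essentially the same route as the paper: the hard direction (ii)$\Rightarrow$(i) invokes Theorem~\ref{Bernst} and folds the representing measure $\sigma$ onto $\rbb_+$ (handling the atom at $0$ exactly as the paper does), the ``moreover'' part is delegated to Lemma~\ref{wkw-supp-zw}, and the support formula comes from the $L^{p}\to L^{\infty}$ convergence of norms, which is precisely the Rudin exercise the paper cites. The only cosmetic differences are that you verify exponential convexity in (i)$\Rightarrow$(ii) by decomposing $\cosh(xu)$ into exponentials rather than via the hyperbolic addition formula, and you route the limit $\lim_n\gamma_n^{1/2n}$ through $\nu\circ\omega$ instead of applying the norm-convergence fact directly to $\nu$; neither changes the substance.
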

   \begin{proof}
(i)$\Rightarrow$(ii) Notice that
   \allowdisplaybreaks
   \begin{align*}
\sum_{j,k=1}^n \cosh((x_j^*+x_k)u) \lambda_j
\bar\lambda_k &= \sum_{j,k=1}^n \cosh(x_ju)
\cosh(x_ku) \lambda_j \bar\lambda_k
    \\
& \hspace{7.6ex} + \sum_{j,k=1}^n \sinh(x_ju)
\sinh(x_ku) \lambda_j \bar\lambda_k
   \\
& = \bigg|\sum_{j=1}^n \cosh(x_ju) \lambda_j\bigg|^2 +
\bigg|\sum_{j=1}^n \sinh(x_ju) \lambda_j\bigg|^2 \Ge 0
   \end{align*}
for all $n \in \nbb$, $\{x_j\}_{j=1}^n \subseteq
\rbb$, $\{\lambda_j\}_{j=1}^n\subseteq \cbb$ and $u\in
\rbb_+$. This and \eqref{ch-tr-1} imply \mbox{(ii-a)}.
The condition \mbox{(ii-b)} is a direct consequence of
Lemma~\ref{wkw-supp-zw}(i). Finally, the condition
\mbox{(ii-c)} follows from the fact that $\cosh$ is an
even function. Therefore (ii) is valid.

(ii)$\Rightarrow$(i) Since \mbox{(ii-a)} and
\mbox{(ii-b)} hold, Theorem~\ref{Bernst} implies that
there exists a Borel measure $\sigma$ on $\rbb$
satisfying \eqref{juden}. However, by \mbox{(ii-c)},
$\psi$ is even, so we have
   \begin{align*}
\psi(x) & = \frac{1}{2}(\psi(x) + \psi(-x))
\overset{\eqref{juden}}= \int_{\rbb} \cosh(xu) \D
\sigma(u)
   \\
& = \int_{(-\infty,0)} \cosh(xu) \D \sigma(u) +
\sigma(\{0\}) + \int_{(0,\infty)} \cosh(xu) \D
\sigma(u)
   \\
& = \int_{\rbb_+} \cosh(xu) \D \nu(u), \quad x \in
\rbb,
   \end{align*}
where $\nu$ is the Borel measure on $\rbb_+$ defined
by
   \begin{align*}
\nu(\varDelta) = \sigma(-(\varDelta \cap (0,\infty)))
+ \sigma(\{0\})\cdot \delta_0(\varDelta) +
\sigma(\varDelta \cap (0,\infty)), \quad \varDelta \in
\borel{\rbb_+}.
   \end{align*}
(As usual, $\delta_0$ denotes the Borel probability
measure on $\rbb_+$ concentrated on $\{0\}$.) This
means that $\psi$ is of the form \eqref{ch-tr-1}.

The ``moreover'' part except \eqref{coshtr3new} is a
direct consequence of Lemma~\ref{wkw-supp-zw}. Using
\cite[Exercise 4(e), p.\ 71]{rud87}, we obtain
   \begin{align*}
\lim_{n\to\infty} \gamma_{n}^{1/2n} =
\|f\|_{L^{\infty}(\nu)} = \sup\supp \nu,
   \end{align*}
where $f \colon \rbb_+ \to \rbb_+$ is the identity
function. This completes the proof.
   \end{proof}
   \begin{rem}
It is worth mentioning that Theorem~\ref{mmainth}
(more precisely, the implication (ii)$\Rightarrow$(i))
was derived from from Theorem~\ref{Bernst}. A similar
reasoning can be used to deduce Theorem~\ref{Bernst}
from Theorem~\ref{mmainth}, so the two are logically
equivalent. In other words, if $\psi\colon \rbb \to
\rbb_+$ is the hyperbolic cosine transform of a Borel
measure $\nu$ on $\rbb_+$, then there exists a Borel
measure $\sigma$ on $\rbb$ (expressed in terms of
$\nu$) such that \eqref{juden} holds, and {\em vice
versa}.
   \hfill{$\diamondsuit$}
   \end{rem}
In this paper, we mainly deal with hyperbolic cosine
transforms of measures with compact supports. This is
because the composition operators studied in the
second part of the article, induced by hyperbolic
cosine transforms of {\em a priori} arbitrary
measures, are bounded if and only if these measures
have compact supports. The next theorem answers the
question when a given positive-valued function is the
hyperbolic cosine transform of a measure with compact
support. We give a proof relying on the representation
theorem (see Theorem~\ref{Bernst}). Another proof
based on the characterization of the Laplace transform
is provided in Appendix~A. Yet another proof can be
obtained by using the Berg-Maserick-Szafraniec theorem
(see \cite{Sza,B-M} or \cite[Theorem~ 4.2.5]{b-ch-r}).
   \begin{thm} \label{maintheorem}
For $\psi\colon \rbb \to \rbb_+$, the following
conditions are equivalent{\em :}
   \begin{enumerate}
   \item[(i)] $\psi$ is the hyperbolic
cosine transform of a compactly supported Borel
measure $\nu$ on $\rbb_+$,
   \item[(ii)] $\psi$ has the following properties{\em :}
   \begin{enumerate}
   \item[(ii-a)] $\psi$ is exponentially convex,
   \item[(ii-b)] $\psi$ is continuous,
   \item[(ii-c)] $\psi(x)=\psi(-x)$ for every $x\in
\rbb$,
   \item[(ii-d)] there exist  $a,b\in \rbb_+$ such
that
   \begin{align} \label{coshtr2}
\psi(x) \Le a \E^{b|x|}, \quad x \in \rbb,
   \end{align}
   \end{enumerate}
   \item[(iii)] there exists a Stieltjes moment sequence
$\{\gamma_n\}_{n=0}^{\infty}$ such that \eqref{cosh-b}
holds and
   \begin{align} \label{lim-sup-2}
\lim_{n\to\infty} \gamma_{n}^{1/n} <\infty.
   \end{align}
   \end{enumerate}
Moreover, if any of the conditions {\em (i)}-{\em
(iii)} holds, then
   \begin{gather} \label{coshtr3}
\min\big\{b\in \rbb_+ \colon \text{\eqref{coshtr2}
holds for some $a\in \rbb_+$}\big\} = \max\supp \nu =
\lim_{n\to\infty} \gamma_{n}^{1/2n}.
   \end{gather}
   \end{thm}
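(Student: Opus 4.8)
The plan is to prove Theorem~\ref{maintheorem} by establishing the cycle of implications (i)$\Rightarrow$(ii)$\Rightarrow$(iii)$\Rightarrow$(i), and then to deduce the ``moreover'' part from the representations obtained along the way together with Theorem~\ref{mmainth}. The implication (i)$\Rightarrow$(ii) is mostly a refinement of the corresponding step in Theorem~\ref{mmainth}: conditions (ii-a), (ii-b), (ii-c) follow exactly as there (using the $\cosh$-product identity and Lemma~\ref{wkw-supp-zw}(i)), and the new content is (ii-d), which I would get by bounding $\psi(x)=\int_{\rbb_+}\cosh(xu)\,\D\nu(u)\le \nu(\rbb_+)\,\cosh(R|x|)\le \nu(\rbb_+)\,\E^{R|x|}$, where $R=\max\supp\nu<\infty$ by compactness; this already shows that the infimum of admissible $b$ is $\le \max\supp\nu$.

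For (ii)$\Rightarrow$(iii): by Theorem~\ref{mmainth}, conditions (ii-a)--(ii-c) already give that $\psi$ is the hyperbolic cosine transform of a (unique) Borel measure $\nu$ on $\rbb_+$, that \eqref{cosh-b} holds with $\gamma_n=\int_{\rbb_+}u^{2n}\,\D\nu(u)$ a determinate Stieltjes moment sequence, and that $\lim_n\gamma_n^{1/2n}=\sup\supp\nu$. So the only thing to verify is the growth condition \eqref{lim-sup-2}, equivalently $\sup\supp\nu<\infty$. Here I would use the bound \eqref{coshtr2}: if $\sup\supp\nu$ were infinite (or merely $>b$), then for $u>b$ one has $\cosh(xu)$ eventually dominating $a\E^{b|x|}$ as $x\to\infty$, so $\psi(x)=\int\cosh(xu)\,\D\nu(u)\ge \cosh(xu_0)\,\nu([u_0,\infty))$ for suitable $u_0>b$ with $\nu([u_0,\infty))>0$, contradicting \eqref{coshtr2}. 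This simultaneously shows $\sup\supp\nu\le b$ for every admissible $b$, hence $\sup\supp\nu\le\min\{b:\eqref{coshtr2}\text{ holds}\}$; combined with the reverse inequality from (i)$\Rightarrow$(ii-d) and $\lim_n\gamma_n^{1/2n}=\sup\supp\nu$ from Theorem~\ref{mmainth}, this will give \eqref{coshtr3}. Note $\max\supp\nu$ is attained precisely because $\supp\nu$ is closed and bounded, hence compact.

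For (iii)$\Rightarrow$(i): given a Stieltjes moment sequence $\{\gamma_n\}$ with $\lim_n\gamma_n^{1/n}=:c<\infty$, first observe $\lim_n(\gamma_n/(2n)!)^{1/n}=0$, so by Theorem~\ref{cosh-Sti} the function $\psi$ defined by \eqref{cosh-b} is the hyperbolic cosine transform of a Borel measure $\nu$ on $\rbb_+$ with $\nu\circ\omega$ a representing measure of $\{\gamma_n\}$, and by Theorem~\ref{mmainth}, $\sup\supp\nu=\lim_n\gamma_n^{1/2n}=\sqrt{c}<\infty$; hence $\nu$ is compactly supported. That closes the cycle. The ``moreover'' formula \eqref{coshtr3} then assembles from the three inequalities collected above: $\min\{b\}\le\max\supp\nu$ (from the explicit bound with $b=\max\supp\nu$), $\max\supp\nu\le\min\{b\}$ (from the contradiction argument), and $\max\supp\nu=\lim_n\gamma_n^{1/2n}$ (Theorem~\ref{mmainth}, with the supremum attained by compactness).

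\textbf{Main obstacle.} The routine parts are the $\cosh$-product positivity and the crude exponential bound; the delicate point is the lower bound on $\psi$ in the step (ii)$\Rightarrow$(iii) showing $\sup\supp\nu\le b$. One must be careful that $\supp\nu$ is a \emph{closed} set, so if $\sup\supp\nu>b$ there is genuinely a point $u_0\in(b,\sup\supp\nu)$ with $\nu(\text{every neighbourhood of }u_0)>0$, hence $\nu([u_0-\delta,\infty))>0$ for small $\delta$ with $u_0-\delta>b$; then $\psi(x)\ge \cosh((u_0-\delta)x)\,\nu([u_0-\delta,\infty))$ grows like $\E^{(u_0-\delta)x}$, which beats $a\E^{bx}$ as $x\to+\infty$, contradicting \eqref{coshtr2}. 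Getting this quantified cleanly — and making sure the same argument yields the sharp value $\min\{b\}=\max\supp\nu$ rather than just a one-sided estimate — is where the real care is needed; everything else is bookkeeping on top of Theorems~\ref{cosh-Sti}, \ref{Bernst}, and \ref{mmainth}.
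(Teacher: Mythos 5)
Your proposal is correct and follows essentially the same route as the paper: conditions (ii-a)--(ii-c) and the representation come from Theorem~\ref{mmainth}, (ii-d) from the crude bound $\cosh(xu)\Le\cosh(R|x|)\Le\E^{R|x|}$ on $\supp\nu\subseteq[0,R]$, compactness of $\supp\nu$ from the same contradiction argument (a support point $u_0>b$ forces $\psi(x)\Ge\tfrac12\nu\big([u_0-\delta,\infty)\big)\E^{(u_0-\delta)x}$, beating $a\E^{b|x|}$), and the equivalence with (iii) plus the identity \eqref{coshtr3} from Theorem~\ref{cosh-Sti} and \eqref{coshtr3new}. The only difference is cosmetic (a cyclic chain of implications instead of the paper's two separate equivalences), so no further comment is needed.
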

   \begin{proof}
(i)$\Rightarrow$(ii) According to
Theorem~\ref{mmainth}, the conditions
\mbox{(ii-a)}-\mbox{(ii-c)} are satisfied. By (i),
there exists $c\in \rbb_+$ such that
$\nu((c,\infty))=0$. Noting that
   \begin{align} \label{coshle}
\cosh(xu) = \cosh(|x|u) \Le \cosh(c|x|) \Le \E^{c|x|},
\quad x\in \rbb, \, u \in [0,c],
   \end{align}
we deduce from \eqref{ch-tr-1} that \eqref{coshtr2}
holds with $a=\nu([0,c])$ and $b=c$, so \mbox{(ii-d)}
is satisfied.

(ii)$\Rightarrow$(i) In view of Theorem~\ref{mmainth},
$\psi$ is the hyperbolic cosine transform of a Borel
measure $\nu$ on $\rbb_+$. Set $c_0:= \sup\supp \nu$
and
   \begin{align} \label{defb0}
\text{$b_0:=\inf\big\{b\in \rbb_+ \colon
\text{\eqref{coshtr2} holds for some $a\in
\rbb_+$}\big\}$.}
   \end{align}
By \mbox{(ii-d)}, $b_0\in \rbb_+$. Our goal is to
prove that $c_0<\infty$. Suppose, to the contrary,
that $c_0=\infty$. It follows from the definition of
$b_0$ that there exist $b\in [b_0,c_0)$ and $a\in
\rbb_+$ such that \eqref{coshtr2} holds. In turn, by
the definition of $c_0$, there exist $c, \varepsilon
> 0$ such that
   \begin{align} \label{war1}
b < c - \varepsilon \text{ and } \nu(U_{\varepsilon})
> 0,
   \end{align}
where $U_{\varepsilon}:=(c-\varepsilon,
c+\varepsilon)$. Since $\cosh$ is a strictly
increasing function on $\rbb_+$, we get
   \begin{align*}
\psi(x) \overset{\eqref{ch-tr-1}} \Ge
\int_{U_{\varepsilon}} \cosh(xu) \D \nu(u) \Ge
\nu(U_{\varepsilon}) \cosh((c-\varepsilon)x), \quad
x\in \rbb_+.
   \end{align*}
Therefore, we have
   \begin{align*}
\frac 12 \E^{(c-\varepsilon)x} \Le
\cosh((c-\varepsilon)x) \Le
\frac{\psi(x)}{\nu(U_{\varepsilon})}
\overset{\eqref{coshtr2}}\Le
\frac{a}{\nu(U_{\varepsilon})} \E^{bx}, \quad x \in
\rbb_+.
   \end{align*}
Hence, by \eqref{war1}, we arrive at a contradiction.
This shows that $\nu$ is compactly supported, which
means that (i) is satisfied

(i)$\Rightarrow$(iii) This implication and the second
equality in \eqref{coshtr3} follow directly from
Lemma~\ref{wkw-supp-zw} and the equality
\eqref{coshtr3new} in Theorem~\ref{mmainth}.

(iii)$\Rightarrow$(i) Use Theorem~\ref{cosh-Sti} and
the equality \eqref{coshtr3new} in
Theorem~\ref{mmainth}.

It remains to prove the first equality in
\eqref{coshtr3}. Assume that condition (ii) holds.
Using the same reasoning as in the proof of
implication (ii)$\Rightarrow$(i) with $c_0$ in place
of $c$, it can be shown that $b_0 \Ge c_0$. Applying
\eqref{coshle} to $c=c_0$, we see that \eqref{coshtr2}
holds with $a=\nu([0,c_0])$ and $b=c_0$, so $b_0=c_0$
and \eqref{defb0} holds with ``min'' in place of
``inf''. This completes the proof.
   \end{proof}
   \begin{rem}
It follows from the moreover part of Theorem~
\ref{maintheorem} that if $\psi$ is a continuous even
and exponentially convex function, then $\psi$ is
bounded if and only if $\nu((0,\infty))=0$, or
equivalently if and only if $\psi$ is constant. By
Lemma~\ref{kiedy-cosub-cosh}, a function $\psi\colon
\rbb \to \rbb_+$ is exponentially convex if and only
if for every $\xi\in \rbb$, the function $x \to
\psi(\xi + x)$ is positive definite on the
$*$-semigroup $(\rbb_+,+,x^*=x)$. In turn, a
well-known characterization of the Laplace transform
states that a function $\psi\colon \rbb_+ \to \rbb$ is
the Laplace transform of a Borel measure on $\rbb_+$
if and only if $\psi$ is bounded continuous and
positive definite on the $*$-semigroup
$(\rbb_+,+,x^*=x)$ (see \cite[Corollary~
4.4.5]{b-ch-r}). This means that the only function
$\psi\colon \rbb_+ \to \rbb$ which is simultaneously
the Laplace transform of a Borel measure on $\rbb_+$
and the hyperbolic cosine transform of a compactly
supported Borel measure on $\rbb_+$ is a constant
function.
   \hfill{$\diamondsuit$}
   \end{rem}
The following result is a version of
Theorem~\ref{maintheorem} needed in this paper (in
fact, Theorems~\ref{maintheorem} and
\ref{maintheorem-sq} are easily seen to be
equivalent).
   \begin{thm} \label{maintheorem-sq}
For $\varphi\colon \rbb_+ \to \rbb_+$, the following
conditions are equivalent{\em :}
   \begin{enumerate}
   \item[(i)] there exists a compactly supported
Borel measure $\nu$ on $\rbb_+$ such that
   \begin{align} \label{coshtr-sq-2}
\varphi(x)= \int_{\rbb_+} \cosh(\sqrt{x}\, u) \D
\nu(u), \quad x\in \rbb_+,
   \end{align}
   \item[(ii)] $\varphi$ has the following properties{\em :}
   \begin{enumerate}
   \item[(ii-a)] the function $x \longmapsto \varphi(x^2)$
is exponentially convex,
   \item[(ii-b)] $\varphi$ is continuous,
   \item[(ii-c)] there exist  $a,b\in \rbb_+$ such
that
   \begin{align} \label{coshtr2-sq}
\varphi(x) \Le a \E^{b\sqrt{x}}, \quad x \in \rbb_+,
   \end{align}
   \end{enumerate}
   \item[(iii)] there exists a Stieltjes moment sequence
$\{\gamma_n\}_{n=0}^{\infty}$ such that
\eqref{lim-sup-2} holds and
   \begin{align} \label{psisum-sq}
\varphi(x) = \sum_{n=0}^{\infty}
\frac{\gamma_{n}}{(2n)!} x^{n}, \quad x\in \rbb_+.
   \end{align}
   \end{enumerate}
Moreover, if {\em (ii)} holds, then there is only one
Borel measure $\nu$ on $\rbb_+$ satisfying
\eqref{coshtr-sq-2}, the sequence
$\{\gamma_n\}_{n=0}^{\infty}$ in \eqref{psisum-sq} is
unique and is a determinate Stieltjes~ moment sequence
with a representing measure $\nu \circ \omega$, and
   \begin{gather*}
\min\Big\{b\in \rbb_+ \colon \text{\eqref{coshtr2-sq}
holds for some $a\in \rbb_+$}\Big\} = \max\supp \nu =
\lim_{n\to\infty} \gamma_{n}^{1/2n}.
   \end{gather*}
   \end{thm}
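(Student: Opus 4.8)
The plan is to derive Theorem~\ref{maintheorem-sq} directly from Theorem~\ref{maintheorem} by the substitution $x \mapsto \sqrt{x}$, i.e.\ by relating $\varphi$ to a function $\psi\colon \rbb \to \rbb_+$ via $\psi(x) = \varphi(x^2)$ for $x\in\rbb$ (equivalently $\varphi(x) = \psi(\sqrt{x})$ for $x\in\rbb_+$). Since $t\mapsto t^2$ is a bijection of $\rbb_+$ onto itself, this correspondence is a bijection between functions $\varphi\colon\rbb_+\to\rbb_+$ and \emph{even} functions $\psi\colon\rbb\to\rbb_+$, and it preserves continuity in both directions (on $\rbb_+$ one uses continuity of $\sqrt{\,\cdot\,}$; on $\rbb$ one uses continuity of squaring together with evenness). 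The first thing I would do is record this dictionary explicitly, noting in particular that $\psi$ is automatically even, so condition (ii-c) of Theorem~\ref{maintheorem} is free and does not need a counterpart in Theorem~\ref{maintheorem-sq}.

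Next I would translate each clause. For the integral representations: $\varphi$ satisfies \eqref{coshtr-sq-2} with a compactly supported $\nu$ if and only if $\psi(x)=\varphi(x^2)=\int_{\rbb_+}\cosh(|x|u)\,\D\nu(u)=\int_{\rbb_+}\cosh(xu)\,\D\nu(u)$, i.e.\ $\psi$ satisfies \eqref{ch-tr-1} with the same $\nu$; so (i) of Theorem~\ref{maintheorem-sq} is equivalent to (i) of Theorem~\ref{maintheorem} for $\psi$. For the analytic side: (ii-a) of Theorem~\ref{maintheorem-sq} says precisely that $\psi(x)=\varphi(x^2)$ is exponentially convex, which is (ii-a) of Theorem~\ref{maintheorem}; (ii-b) transfers as explained above; and (ii-c) of Theorem~\ref{maintheorem-sq}, namely $\varphi(x)\Le a\E^{b\sqrt{x}}$ for $x\in\rbb_+$, is exactly $\psi(y)\Le a\E^{b|y|}$ for $y\in\rbb$ after the substitution $y=\pm\sqrt{x}$, using evenness of $\psi$; this is (ii-d) of Theorem~\ref{maintheorem}. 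Finally, \eqref{psisum-sq} says $\varphi(x)=\sum_{n\ge 0}\frac{\gamma_n}{(2n)!}x^n$, and substituting $x\mapsto x^2$ turns this into \eqref{cosh-b} for $\psi$, so (iii) of Theorem~\ref{maintheorem-sq} is equivalent to (iii) of Theorem~\ref{maintheorem}. Thus all three conditions of Theorem~\ref{maintheorem-sq} are, clause by clause, the three conditions of Theorem~\ref{maintheorem} applied to $\psi$, and the equivalences follow at once.

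For the ``moreover'' part I would again just read off the conclusion of Theorem~\ref{maintheorem} (and of Lemma~\ref{wkw-supp-zw} / Theorem~\ref{mmainth}) for $\psi$: uniqueness of $\nu$ in \eqref{ch-tr-1} gives uniqueness in \eqref{coshtr-sq-2} since the two equations have the same solutions $\nu$; uniqueness and determinacy of $\{\gamma_n\}$ as a Stieltjes moment sequence with representing measure $\nu\circ\omega$ is verbatim the same statement, since the $\gamma_n$ appearing in \eqref{psisum-sq} are the same as those in \eqref{cosh-b} for $\psi$ (matching Taylor coefficients after $x\mapsto x^2$); and the chain of equalities $\min\{b\colon\text{\eqref{coshtr2-sq} holds}\}=\max\supp\nu=\lim_n\gamma_n^{1/2n}$ is obtained from \eqref{coshtr3} by noting that, under $y=\pm\sqrt{x}$, inequality \eqref{coshtr2-sq} for $\varphi$ with constant $b$ is the same as \eqref{coshtr2} for $\psi$ with the same $b$, so the two minima coincide, while the remaining two quantities are literally the ones in \eqref{coshtr3}.

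I do not expect a serious obstacle here; the only point that needs a little care — and which I would state as a short lemma or inline remark before the main argument — is the precise claim that $\varphi\mapsto\psi(\cdot)=\varphi((\cdot)^2)$ is a continuity-preserving bijection onto the even continuous positive functions on $\rbb$, and that this substitution respects each of the structural properties (exponential convexity of $x\mapsto\varphi(x^2)$, the exponential bound, the power series, and the integral representation) in \emph{both} directions. Once that bookkeeping is in place, the proof is a direct appeal to Theorem~\ref{maintheorem}, so I would write it as: ``Apply Theorem~\ref{maintheorem} to the function $\rbb\ni x\mapsto\varphi(x^2)$, using that a function on $\rbb_+$ of this form is automatically even and that all the listed conditions correspond to one another under the substitution $x\leftrightarrow x^2$; the stated formulas then follow from \eqref{coshtr3} and the moreover part of Theorem~\ref{maintheorem}.''
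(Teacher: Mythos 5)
Your proposal is correct and is essentially the paper's own proof, which consists of the single line ``apply Theorems~\ref{mmainth} and \ref{maintheorem} to the function $\psi(x)=\varphi(x^2)$''; you have simply spelled out the bookkeeping that the paper leaves implicit. The clause-by-clause dictionary you give (evenness for free, the exponential bound under $y=\pm\sqrt{x}$, matching Taylor coefficients) is exactly the intended reduction.
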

   \begin{proof}
One can apply Theorems~ \ref{mmainth} and
\ref{maintheorem} to the function $\psi\colon \rbb \to
\rbb_+$ defined by $\psi(x)= \varphi(x^2)$ for $x \in
\rbb$.
   \end{proof}
   \begin{cor}
If $\varphi\colon \rbb_+ \to \rbb_+$ satisfies the
condition {\em \mbox{(ii)}} of Theorem {\em
\ref{maintheorem-sq}}, then $\varphi$ extends uniquely
to the entire function $\widehat\varphi$ defined by
   \begin{gather*}
\widehat\varphi(z) = \sum_{n=0}^{\infty}
\frac{\gamma_{n}}{(2n)!} z^n, \quad z\in \cbb,
   \end{gather*}
where $\{\gamma_n\}_{n=0}^{\infty}$ is as in the
statement {\em (iii)} of Theorem {\em
\ref{maintheorem-sq}}.
   \end{cor}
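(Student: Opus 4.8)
The plan is to read off from Theorem~\ref{maintheorem-sq} the ingredients that make $\widehat\varphi$ well defined, and then settle uniqueness with the identity theorem. By the implication \mbox{(ii)$\Rightarrow$(iii)} of Theorem~\ref{maintheorem-sq}, condition (ii) supplies a Stieltjes moment sequence $\{\gamma_n\}_{n=0}^{\infty}$ with $\lim_{n\to\infty}\gamma_n^{1/n}<\infty$ and $\varphi(x)=\sum_{n=0}^{\infty}\frac{\gamma_n}{(2n)!}x^n$ for all $x\in\rbb_+$. First I would check that the power series $\sum_{n=0}^{\infty}\frac{\gamma_n}{(2n)!}z^n$ has infinite radius of convergence. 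This is precisely the estimate already carried out in the proof of Lemma~\ref{wkw-supp-zw} (see also Footnote~\ref{fut-1}): the growth bound \eqref{lim-sup-2} together with Stirling's approximation \cite[Theorem~9.7.1]{sim15} gives $\lim_{n\to\infty}\big(\frac{\gamma_n}{(2n)!}\big)^{1/n}=0$, so by the Cauchy radius formula \cite[Theorem~III.1.3]{Con78} the series converges for every $z\in\cbb$ and thus defines an entire function $\widehat\varphi$. Since \eqref{psisum-sq} holds, $\widehat\varphi(x)=\varphi(x)$ for all $x\in\rbb_+$, so $\widehat\varphi$ is an entire extension of $\varphi$.

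It then remains to see that this extension is unique. Suppose $g\colon\cbb\to\cbb$ is entire and $g(x)=\varphi(x)$ for all $x\in\rbb_+$. Then $g-\widehat\varphi$ is entire and vanishes on $\rbb_+$, a subset of $\cbb$ with an accumulation point; by the identity theorem for holomorphic functions, $g=\widehat\varphi$ on all of $\cbb$, which gives uniqueness.

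I do not anticipate any serious obstacle here; the only point that calls for a little care is the claim that the radius of convergence is infinite, and that has in effect already been established in Section~\ref{Sec.2.1}. Alternatively, one could avoid invoking Stirling by passing through Theorem~\ref{maintheorem-sq}\,(ii)$\Rightarrow$(i): the function $\psi(x):=\varphi(x^2)$ is then the hyperbolic cosine transform of a compactly supported Borel measure, so by Lemma~\ref{wkw-supp-zw}(i) it is represented by the everywhere-convergent power series $\sum_{n=0}^{\infty}\frac{\gamma_n}{(2n)!}x^{2n}$; substituting $z$ for $x^2$ yields $\widehat\varphi$ at once, and the uniqueness argument is unchanged.
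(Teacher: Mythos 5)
Your proof is correct. The paper states this corollary without proof, treating it as an immediate consequence of Theorem~\ref{maintheorem-sq}; your argument --- extracting the moment sequence via (ii)$\Rightarrow$(iii), noting the series has infinite radius of convergence (which already follows from the convergence of \eqref{psisum-sq} with nonnegative coefficients at every $x\in\rbb_+$, even without invoking Stirling), and settling uniqueness by the identity theorem --- is exactly the intended reasoning.
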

Now we give an example of a function which is the hyperbolic
cosine transform of a compactly supported Borel measure on
$\rbb_+$.
   \begin{exa}
Define the function $\psi\colon \rbb\to \rbb$ by
   \begin{align*}
\psi(x) =
   \begin{cases}
\displaystyle\frac{\sinh x}{x} & \text{if } x\neq 0,
   \\[1ex]
1 & \text{if } x=0,
   \end{cases}
   \qquad x\in \rbb.
   \end{align*}
It is easily seen that
   \begin{align*}
\psi(x) = \int_{[0,1]} \cosh(x\, u) \D u, \quad x\in
\rbb.
   \end{align*}
The function $\psi$ has the following power series
expansion:
   \begin{align*}
\psi(x) = \sum_{n=0}^{\infty} \frac{x^{2n}}{(2n+1)!},
\quad x\in \rbb.
   \end{align*}
In particular, we have
   \begin{align*}
\frac{\sinh \sqrt{x}}{\sqrt{x}} = \int_{[0,1]}
\cosh(\sqrt{x}\, u) \D u, \quad x\in (0,\infty),
   \end{align*}
and
   \begin{align*} \tag*{$\diamondsuit$}
\frac{\sinh \sqrt{x}}{\sqrt{x}} = \sum_{n=0}^{\infty}
\frac{x^{n}}{(2n+1)!}, \quad x\in (0,\infty).
   \end{align*}
   \end{exa}
   \subsection{\label{Sec.4}The logarithmic derivative}
   It follows from Theorem~\ref{maintheorem} that the
hyperbolic cosine transform $\psi$ of a Borel measure
$\nu$ has at most exponential growth if and only if
the closed support of the measure $\nu$ is compact. In
this section we will provide yet another criterion for
the compactness of the closed support of the measure
$\nu$ formulated in terms of the logarithmic
derivative of the transform $\psi$ (see
Theorem~\ref{limsupq-1} below).

Before stating the aforementioned result, we
define the class $\mathscr{H}$. For $k\in \nbb$,
we denote by $\mathscr{H}_k$ the set of all
entire functions $\varPhi$ on $\cbb$ of the form
   \begin{align}  \label{dub-sta}
\varPhi(z) = \alpha_0 + \sum_{n=k}^\infty \alpha_n
z^n, \quad z \in \cbb,
   \end{align}
where $\alpha_0\in \rbb_+$, $\alpha_k > 0$ and
$\alpha_n\in \rbb_+$ for all integers $n \Ge
k+1$. Set
   \allowdisplaybreaks
   \begin{align*}
\mathscr{H} & = \bigcup_{k=1}^{\infty} \mathscr{H}_k,
   \\
\mathscr{H}_0 & = \{\varPhi \in \mathscr{H} \colon
\varPhi(0) > 0\}
   \\
\mathscr{H}_{\bullet} & = \bigcap_{k=0}^{\infty}
\Big\{\psi \in \mathscr{H}\colon \psi^{(k)}(0) > 0
\Big\},
   \\
\mathscr{H}_{2\bullet}& = \bigcap_{k=0}^{\infty}
\Big\{\psi \in \mathscr{H}\colon \psi^{(2k)}(0)
> 0 \text{ and } \psi^{(2k+1)}(0) = 0\Big\}.
   \end{align*}
Note that $\varPhi(\rbb_+) \subseteq \rbb_+$ and
$\lim_{\rbb_+ \ni t\to \infty} \varPhi(t)=\infty$ for
every $\varPhi \in \mathscr{H}$. Since $\varPhi\in
\mathscr{H}$ is uniquely determined by its values on
$\rbb_+$ (resp., $\rbb$), we will also write $\psi \in
\mathscr{H}$ when $\psi$ is a function on $\rbb_+$
(resp., $\rbb$) that has an extension to the entire
function $\widehat \psi \in \mathscr{H}$. Clearly
$\exp \in \mathscr{H}_{\bullet}$.
   \begin{thm} \label{limsupq-1}
Suppose that $\psi\colon \rbb\to\rbb_+$ is the
hyperbolic cosine transform of a nonzero Borel measure
$\nu$ on $\rbb_+$. Then
   \begin{enumerate}
   \item[(i)] if $\supp \nu \neq \{0\}$, then $\psi
\in \mathscr{H}_{2\bullet}$,
   \item[(ii)] the limit $\lim_{x\to\infty}
\displaystyle{\frac{\psi^{\prime}(x)}{\psi(x)}}$ exists in
$[0,\infty]$ and
   \begin{align} \label{heliou4-1}
\lim_{x\to\infty}
\displaystyle{\frac{\psi^{\prime}(x)}{\psi(x)}} = \sup_{x
\in (0,\infty)}
\displaystyle{\frac{\psi^{\prime}(x)}{\psi(x)}} = \sup
\supp \nu,
   \end{align}
   \item[(iii)] the following conditions are equivalent{\em :}
   \begin{enumerate}
   \item[(a)] $\nu$ is compactly supported,
   \item[(b)] $\inf_{b\in \rbb_+}\sup_{x\in \rbb}\psi(x)
\E^{-b|x|} < \infty,$
   \item[(c)] $\lim_{n\to \infty} \big(\psi^{(2n)}(0)\big)^{1/n} <
\infty,$
   \item[(d)] $\lim_{x\to\infty}
\displaystyle{\frac{\psi^{\prime}(x)}{\psi(x)}} < \infty.$
   \end{enumerate}
   \end{enumerate}
   \end{thm}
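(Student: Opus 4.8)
The plan is to exploit the power series expansion $\psi(x)=\sum_{n=0}^\infty \frac{\gamma_n}{(2n)!}x^{2n}$ from Lemma~\ref{wkw-supp-zw}, together with the formula $\psi'(x)=\int_{\rbb_+}u\sinh(xu)\,\D\nu(u)$, and to treat the three parts in the order (i), then (ii), then (iii).

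For (i), I would argue that $\psi^{(2k)}(0)=\gamma_k=\int_{\rbb_+}u^{2k}\,\D\nu(u)$ and $\psi^{(2k+1)}(0)=0$ for every $k\in\zbb_+$; the odd derivatives vanish because $\psi$ is even, and the even derivatives come from term-by-term differentiation of \eqref{cosh-b}. If $\supp\nu\neq\{0\}$, then $\nu$ assigns positive mass to some set bounded away from $0$, so $\gamma_k=\int_{\rbb_+}u^{2k}\,\D\nu(u)>0$ for all $k$. Taking $\alpha_0=\gamma_0=\nu(\rbb_+)$, $\alpha_{2n}=\gamma_n/(2n)!>0$ and $\alpha_{2n+1}=0$, one sees $\psi$ (or rather its entire extension, which exists and is $\sum\frac{\gamma_n}{(2n)!}z^{2n}$ by the Cauchy radius computation in Lemma~\ref{wkw-supp-zw}) lies in $\mathscr{H}_2$ with all even Taylor coefficients strictly positive and all odd ones zero, hence in $\mathscr{H}_{2\bullet}$. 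This part is essentially bookkeeping.

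For (ii), the key is monotonicity of $x\mapsto \psi'(x)/\psi(x)$ on $(0,\infty)$. The natural route is to write, for $x>0$,
\begin{align*}
\frac{\psi'(x)}{\psi(x)} = \frac{\int_{\rbb_+} u\sinh(xu)\,\D\nu(u)}{\int_{\rbb_+}\cosh(xu)\,\D\nu(u)},
\end{align*}
and to recognize this as $\frac{d}{dx}\log\psi(x)$. I would show $(\log\psi)''\ge 0$ on $(0,\infty)$, i.e. that $\log\psi$ is convex there, which gives monotonicity of the logarithmic derivative and hence existence of the limit as $x\to\infty$, equal to the supremum over $(0,\infty)$. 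Convexity of $\log\psi$ should follow from a Cauchy–Schwarz / covariance argument: $(\log\psi)'' = \psi''\psi^{-1} - (\psi'\psi^{-1})^2$, and expressing $\psi''(x)=\int u^2\cosh(xu)\,\D\nu$, $\psi'(x)=\int u\sinh(xu)\,\D\nu$, $\psi(x)=\int\cosh(xu)\,\D\nu$, the nonnegativity of $(\log\psi)''$ reduces to an inequality of the form $\big(\int u\sinh(xu)\,\D\nu\big)^2 \le \big(\int\cosh(xu)\,\D\nu\big)\big(\int u^2\cosh(xu)\,\D\nu\big)$, which is Cauchy–Schwarz once one checks $\sinh(xu)^2 \le \cosh(xu)\cdot(\text{something})$ — here one uses $\sinh\le\cosh$ to bound $u\sinh(xu)\le u\sqrt{\cosh(xu)}\sqrt{\cosh(xu)}$ appropriately, or more cleanly applies Cauchy–Schwarz to the functions $u\sqrt{\cosh(xu)}$ and $\sqrt{\cosh(xu)}$ with the bound $\sinh(xu)\le\cosh(xu)$. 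Then the identification of the limit with $\sup\supp\nu$: for the upper bound $\psi'(x)/\psi(x)\le \sup\supp\nu=:c_0$ use $u\sinh(xu)\le c_0\cosh(xu)$ pointwise when $u\le c_0$ and $u\in\supp\nu$ (valid since $\tanh(xu)\le 1$, so $u\sinh(xu)\le u\cosh(xu)\le c_0\cosh(xu)$); for the lower bound, fix $c<c_0$, restrict the numerator to $\{u>c\}$ (which has positive $\nu$-mass), and show $\int_{\{u>c\}}u\sinh(xu)\,\D\nu \Big/ \int_{\rbb_+}\cosh(xu)\,\D\nu \to$ something $\ge c$ as $x\to\infty$, by dominating the denominator's contribution from $\{u\le c\}$ (which grows like $e^{cx}$) against the numerator (which grows at least like $c\,e^{c'x}$ for some $c'>c$). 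Letting $c\uparrow c_0$ finishes. If $c_0=\infty$ the lower-bound argument shows the limit is $+\infty$.

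For (iii), the equivalences should follow mostly by assembling earlier results. (a)$\Leftrightarrow$(b) is Theorem~\ref{maintheorem} ((ii-d) versus compact support, with the explicit constant in \eqref{coshtr3}). (a)$\Leftrightarrow$(c) is the equivalence of (i) and (iii) in Theorem~\ref{maintheorem}, since $\psi^{(2n)}(0)=\gamma_n$ and $\lim\gamma_n^{1/n}<\infty$ iff \eqref{lim-sup-2}. (a)$\Leftrightarrow$(d) is the new contribution and follows from part (ii): $\lim_{x\to\infty}\psi'(x)/\psi(x)=\sup\supp\nu$, which is finite precisely when $\supp\nu$ is bounded, i.e. when $\nu$ is compactly supported (recall $\nu$ is finite, so boundedness of the support is equivalent to compactness). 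I expect the main obstacle to be the careful asymptotic analysis in the lower-bound half of (ii) — splitting the integrals at a threshold $c<c_0$, controlling the tail contribution to the denominator, and handling the case $\sup\supp\nu=\infty$ uniformly; everything else is either Cauchy–Schwarz or a citation to Theorems~\ref{mmainth} and~\ref{maintheorem}.
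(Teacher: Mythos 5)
Your proposal is correct, and parts (i) and (iii), as well as both bounds in (ii), match the paper's proof almost step for step: the upper bound $\psi'(x)/\psi(x)\Le\sup\supp\nu$ via $u\sinh(xu)=u\tanh(xu)\cosh(xu)\Le(\sup\supp\nu)\cosh(xu)$, and the lower bound via splitting the integrals at a threshold below $\sup\supp\nu$ and showing the tail dominates exponentially, are exactly the paper's computations (the paper phrases the lower bound as a proof by contradiction with $R=\liminf_{x\to\infty}\psi'(x)/\psi(x)$, but the exponential-domination mechanism is identical). The one genuine divergence is how you obtain \emph{existence} of the limit and the first equality in \eqref{heliou4-1}: you propose proving that $\log\psi$ is convex via Cauchy--Schwarz applied to $\psi''\psi-(\psi')^2$, so that $\psi'/\psi$ is nondecreasing and its limit equals its supremum. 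The paper instead gets this for free from the sandwich $\sup\supp\nu\Le\liminf_{x\to\infty}\psi'(x)/\psi(x)\Le\limsup_{x\to\infty}\psi'(x)/\psi(x)\Le\sup_{x\in(0,\infty)}\psi'(x)/\psi(x)\Le\sup\supp\nu$, which forces all four quantities to coincide without ever computing $\psi''$. Your Cauchy--Schwarz argument is valid (writing $u\sinh(xu)$ as $u\sqrt{\cosh(xu)}\cdot\sinh(xu)/\sqrt{\cosh(xu)}$ and using $\sinh^2\Le\cosh^2$ gives $(\psi')^2\Le\psi''\psi$), and it yields the extra information that $\log\psi$ is convex -- a property the paper only exploits later, in Proposition~\ref{blis-zur}, for cohyponormality -- but it costs you a justification of term-by-term differentiation for $\psi''$ and is not needed once both bounds of the sandwich are in hand.
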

   \begin{proof}
(i) By Lemma~\ref{wkw-supp-zw}, the function $\psi$
extends uniquely to the entire function $\widehat
\psi$ on $\cbb$ given~by
   \begin{align}  \label{nie-ma}
\widehat\psi(z) = \sum_{n=0}^{\infty}
\frac{\gamma_n}{(2n)!} z^{2n}, \quad z\in \cbb,
   \end{align}
where $\gamma_n=\int_{\rbb_+} u^{2n} \D \nu(u) <
\infty$ for all $n\in \zbb_+$. Since $\supp \nu
\neq \{0\},$ $\gamma_n
> 0$ for all $n\in \zbb_+$. Indeed, otherwise
$\gamma_{k} = 0$ for some $k\Ge 1$ (the case
$k=0$ is excluded by the assumption that
$\nu\neq 0$), which implies that $\supp
\nu=\{0\}$, a contradiction. This shows that
$\psi \in \mathscr{H}_{2\bullet}$.

(ii) Set $r= \sup \supp \nu$. We show that
   \begin{align} \label{heliou3}
0 \Le \sup_{x \in (0,\infty)}
\displaystyle{\frac{\psi^{\prime}(x)}{\psi(x)}} \Le r.
   \end{align}
Without loss of generality, we may assume that
$r<\infty$. Clearly, $r= \max \supp \nu$ and $\supp
\nu \subseteq [0,r]$. By Lemma~\ref{wkw-supp-zw}, we
have
   \allowdisplaybreaks
   \begin{align*}
\psi^{\prime}(x) & = \int_{[0,r]} u \sinh(x\, u) \D \nu(u)
   \\
& = \int_{[0,r]} u \tanh(x\, u) \cosh(x\, u) \D \nu(u)
   \\
& \Le r \int_{[0,r]} \cosh(x\, u) \D \nu(u)
   \\
& = r \psi(x), \quad x\in \rbb_+,
   \end{align*}
which yields \eqref{heliou3}.

Set $R=\liminf_{x\to\infty}
\displaystyle{\frac{\psi^{\prime}(x)}{\psi(x)}}$. By
\eqref{cosh-a}, $R \Ge 0$. We claim that
   \begin{align} \label{brzusio}
\supp \nu \subseteq [0,R].
   \end{align}
Without loss of generality, we may assume that $R <
\infty$. Suppose to the contrary that there exits $M
> R$ such that $\nu((M,\infty)) > 0$. Take $L\in
(R,M)$. We show that
   \begin{align}   \label{heliou1}
\lim_{x\to\infty} \frac{\int_{\rbb_+} \cosh(x\, u) \D
\nu(u)}{\int_{(L,\infty)} \cosh(x\, u) \D \nu(u)} = 1.
   \end{align}
For this, it suffices to prove that
   \begin{align} \label{heliou}
\lim_{x\to\infty} \frac{\int_{[0,L]} \cosh(x\, u) \D
\nu(u)}{\int_{(M,\infty)} \cosh(x\, u) \D \nu(u)} = 0.
   \end{align}
That \eqref{heliou} holds may be deduced from the
following estimates:
   \allowdisplaybreaks
   \begin{gather*}
\int_{[0,L]} \cosh(x\, u) \D \nu(u) \Le \nu([0,L])
\cosh(L x), \quad x\in \rbb_+,
   \\
\int_{(M,\infty)} \cosh(x\, u) \D \nu(u) \Ge
\nu((M,\infty)) \cosh(M x), \quad x\in \rbb_+,
   \\
\lim_{x\to \infty} \frac{\cosh(L x)}{\cosh(M x)} =
\lim_{x\to \infty} \frac{\exp(L x)}{\exp(M x)} =0
\quad \text{(because $M > L$).}
   \end{gather*}
A similar reasoning may be used to show that
   \begin{align} \label{heliou2}
\lim_{x\to\infty} \frac{\int_{\rbb_+} u \sinh(x\, u)
\D \nu(u)}{\int_{(L,\infty)} u \sinh(x\, u) \D \nu(u)}
& = 1.
   \end{align}
Combining \eqref{cosh-a} with \eqref{heliou1} and
\eqref{heliou2} yields
   \allowdisplaybreaks
   \begin{align*}
R= \liminf_{x\to\infty}
\displaystyle{\frac{\psi^{\prime}(x)}{\psi(x)}} & =
\liminf_{x\to\infty} \frac{\int_{\rbb_+} u \sinh(x\, u) \D
\nu(u)}{\int_{\rbb_+} \cosh(x\, u) \D \nu(u)}
      \\
& = \liminf_{x\to\infty} \frac{\int_{(L,\infty)} u
\sinh(x\, u) \D \nu(u)}{\int_{(L,\infty)} \cosh(x\, u)
\D \nu(u)}
   \\
& \Ge L \liminf_{x\to\infty} \frac{\int_{(L,\infty)}
\sinh(x\, u) \D \nu(u)}{\int_{(L,\infty)} \cosh(x\, u)
\D \nu(u)}
   \\
& = L \liminf_{x\to\infty} \frac{\int_{(L,\infty)}
\tanh(x\, u) \cosh(x\, u) \D \nu(u)}{\int_{(L,\infty)}
\cosh(x\, u) \D \nu(u)}
   \\
& \Ge L \lim_{x \to \infty} \tanh(x\, L)
   \\
& = L > R,
   \end{align*}
a contradiction. This proves our claim.

It follows that
   \begin{align*}
r \overset{\eqref{brzusio}} \Le R=\liminf_{x\to\infty}
\frac{\psi^{\prime}(x)}{\psi(x)} \Le \limsup_{x\to\infty}
\frac{\psi^{\prime}(x)}{\psi(x)} \Le\sup_{x \in (0,\infty)}
\displaystyle{\frac{\psi^{\prime}(x)}{\psi(x)}}
\overset{\eqref{heliou3}} \Le r,
   \end{align*}
which implies that the limit $\lim_{x\to\infty}
\displaystyle{\frac{\psi^{\prime}(x)}{\psi(x)}}$ exists in
$[0,\infty]$ and \eqref{heliou4-1} holds.

(iii) That the conditions (a) and (d) are equivalent
follows from (ii). By Theorem~\ref{maintheorem}, (a)
implies (b). That (b) implies (a) follows from
Theorems~ \ref{mmainth} and \ref{maintheorem}.
Finally, by Lemma~\ref{wkw-supp-zw} and
Theorem~\ref{maintheorem}, (a) and (c) are equivalent
(cf.\ Remark~\ref{Taylor-gam}). This completes the
proof.
   \end{proof}
Below, we will formulate a version of
Theorem~\ref{limsupq-1} suitable for composition
operators considered in this paper.
   \begin{thm}
Suppose that $\nu$ is a nonzero Borel
measure on $\rbb_+$ such that $\supp
\nu \neq \{0\}$ and
   \begin{align*}
\varphi(x):=\int_{\rbb_+} \cosh(\sqrt{x}\, u) \D
\nu(u) < \infty, \quad x\in \rbb_+.
   \end{align*}
Then $\varphi \in \mathscr{H}_{\bullet}$, the
limit $\lim_{x\to\infty} \sqrt{x} \,
\displaystyle{\frac{\varphi^{\prime}(x)}{\varphi(x)}}$
exists in $[0,\infty]$ and
   \begin{align*}
\lim_{x\to\infty} \sqrt{x} \,
\displaystyle{\frac{\varphi^{\prime}(x)}{\varphi(x)}} =
\sup_{x \in (0,\infty)} \sqrt{x} \,
\displaystyle{\frac{\varphi^{\prime}(x)}{\varphi(x)}} =
\frac 12 \sup \supp \nu.
   \end{align*}
   \end{thm}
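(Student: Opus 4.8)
The plan is to deduce the statement from Theorem~\ref{limsupq-1} by means of the substitution $x = t^2$. First I would set $\psi(t) := \varphi(t^2)$ for $t \in \rbb$. Since $\cosh$ is even, $\psi(t) = \int_{\rbb_+} \cosh(|t|\,u)\, \D\nu(u) = \int_{\rbb_+} \cosh(tu)\, \D\nu(u)$, and this is finite for every $t \in \rbb$ because $\varphi$ is finite on $\rbb_+$. Hence $\psi\colon \rbb \to \rbb_+$ is the hyperbolic cosine transform of the nonzero measure $\nu$, with $\supp\nu \neq \{0\}$, so Theorem~\ref{limsupq-1} applies to $\psi$ verbatim: it yields $\psi \in \mathscr{H}_{2\bullet}$, the existence of $\lim_{t\to\infty} \psi'(t)/\psi(t)$ in $[0,\infty]$, and the equalities $\lim_{t\to\infty} \psi'(t)/\psi(t) = \sup_{t\in(0,\infty)} \psi'(t)/\psi(t) = \sup\supp\nu$.

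Next I would transfer the regularity from $\psi$ to $\varphi$. By Lemma~\ref{wkw-supp-zw} (or directly from $\psi \in \mathscr{H}_{2\bullet}$), $\psi$ extends to the entire function $z \mapsto \sum_{n=0}^{\infty} \frac{\gamma_n}{(2n)!}\, z^{2n}$, where $\gamma_n = \int_{\rbb_+} u^{2n}\, \D\nu(u)$, and the hypothesis $\supp\nu \neq \{0\}$ forces $\gamma_n > 0$ for every $n \in \zbb_+$. Since $\varphi(x) = \psi(\sqrt{x}) = \sum_{n=0}^{\infty} \frac{\gamma_n}{(2n)!}\, x^n$ for $x \in \rbb_+$ and this power series converges on all of $\cbb$, the function $\varphi$ extends to the entire function $\widehat\varphi(z) = \sum_{n=0}^{\infty} \frac{\gamma_n}{(2n)!}\, z^n$; its coefficients are nonnegative with the first two strictly positive, so $\widehat\varphi \in \mathscr{H}_1 \subseteq \mathscr{H}$, and $\widehat\varphi^{(k)}(0) = \frac{k!}{(2k)!}\, \gamma_k > 0$ for every $k \in \zbb_+$, whence $\varphi \in \mathscr{H}_\bullet$.

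Finally I would compute the logarithmic derivative. Since $\varphi$ is differentiable on $(0,\infty)$, the chain rule gives $\psi'(t) = 2t\, \varphi'(t^2)$ for $t > 0$, hence $\varphi'(t^2) = \psi'(t)/(2t)$ and $\sqrt{x}\, \varphi'(x)/\varphi(x) = \frac{1}{2}\, \psi'(\sqrt{x})/\psi(\sqrt{x})$ for $x \in (0,\infty)$. As $x \mapsto \sqrt{x}$ is an increasing homeomorphism of $(0,\infty)$ onto itself sending $\infty$ to $\infty$, the existence of $\lim_{x\to\infty} \sqrt{x}\, \varphi'(x)/\varphi(x)$ in $[0,\infty]$, together with the equality of this limit, of $\sup_{x\in(0,\infty)} \sqrt{x}\, \varphi'(x)/\varphi(x)$ and of $\frac{1}{2}\sup\supp\nu$, follows at once from the corresponding facts for $\psi$. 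I do not expect a genuine obstacle here; the argument is essentially bookkeeping around $x = t^2$. The only points requiring a little care are: verifying that $\psi$ meets the exact hypothesis of Theorem~\ref{limsupq-1} (finiteness on all of $\rbb$, not merely on $\rbb_+$); checking that $\psi \in \mathscr{H}_{2\bullet}$ passes to $\varphi \in \mathscr{H}_\bullet$ when the exponents $2n$ are replaced by $n$; and applying the chain rule only on $(0,\infty)$ so as never to divide by $t$ at $t = 0$.
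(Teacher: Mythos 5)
Your proposal is correct and follows essentially the same route as the paper: define $\psi(t)=\varphi(t^2)$, apply Theorem~\ref{limsupq-1} to $\psi$, identify the entire extension $\widehat\varphi(z)=\sum_{n\Ge 0}\frac{\gamma_n}{(2n)!}z^n$ with strictly positive coefficients to get $\varphi\in\mathscr{H}_{\bullet}$, and transfer the logarithmic-derivative statements via the substitution $x=t^2$. The chain-rule bookkeeping you spell out is exactly what the paper leaves implicit in "Applying Theorem~\ref{limsupq-1}(ii), we obtain the remaining conclusions."
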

   \begin{proof}
Consider the function $\psi\colon \rbb \to \rbb_+$
defined by $\psi(x)= \varphi(x^2)$ for $x \in \rbb.$
Since the hyperbolic cosine $\cosh$ is even, we get
   \begin{align*}
\psi(x)=\int_{\rbb_+} \cosh(x u) \D \nu(u) < \infty,
\quad x\in \rbb.
   \end{align*}
In view of the proof of Theorem~\ref{limsupq-1},
the entire function $\widehat\psi$ on $\cbb$
defined by \eqref{nie-ma} extends $\psi$ and
$\widehat \psi \in \mathscr{H}_{2\bullet}.$
Define the entire function $\widehat\varphi$ on
$\cbb$ by
   \begin{align*}
\widehat\varphi(z) = \sum_{n=0}^{\infty}
\frac{\gamma_n}{(2n)!} z^{n}, \quad z\in \cbb.
   \end{align*}
Clearly $\widehat \varphi \in
\mathscr{H}_{\bullet}.$ Moreover
   \begin{align*}
\widehat\varphi(x) = \widehat\psi (\sqrt{x}) = \psi
(\sqrt{x}) = \varphi (x), \quad x \in \rbb_+,
   \end{align*}
which means that $\widehat\varphi$ is a (necessarily
unique) entire function on $\cbb$ extending $\varphi.$
Applying Theorem~\ref{limsupq-1}(ii), we obtain the
remaining conclusions. This completes the proof.
   \end{proof}
   \section{Applications to composition
operators}
   \subsection{\label{Sec.3.1}Composition operators with affine symbols}
In the subsequent sections we will show that the
cosubnormality of a class of bounded composition
operators on $L^2$ spaces with affine symbols on
the $\kappa$-dimensional Euclidean space is
closely related to the hyperbolic cosine
transform. Let us first establish the necessary
terminology and notation. Given a complex
Hilbert space $\hh$, we denote by $\ogr{\hh}$
the $C^*$-algebra of all bounded linear
operators on~$\hh$. If $S\in \ogr{\hh}$, then
$\jd{S}$ and $\ob{S}$ stand for the kernel and
the range of $S$ respectively. We say that an
operator $S\in \ogr{\hh}$ is
   \begin{enumerate}
   \item[$\bullet$] {\em hyponormal} if $SS^* \Le S^*S$,
   \item[$\bullet$] {\em subnormal} if there exist
a complex Hilbert space $\kk$ and a normal operator
$N\in \ogr{\kk}$ such that $\hh \subseteq \kk$
(isometric embedding) and $Sh=Nh$ for every $h\in
\hh$,
   \item[$\bullet$] {\em cohyponormal} (resp.\ {\em cosubnormal}\/)
if the adjoint $S^*$ of $S$ is hyponormal (resp.\
subnormal).
   \end{enumerate}
It is well known that subnormal operators are
hyponormal, but not conversely. We refer the reader to
\cite{Con91} for the foundations of the theory of
subnormal and hyponormal operators.

To give the reader a better insight
into the topic of this section, we
start with a more general context.
First, note that the only hyponormal
composition operator on an $L^2$ space
over a finite measure space is an
isometric operator (see
\cite[Lemma~7]{Ha-Wh84}). Since the
measure spaces considered in the
forthcoming paragraphs are finite, we
focus our attention on the issue of
cosubnormality. Denote by $V_\kappa$
the $\kappa$-dimensional Lebesgue
measure on $\rbb^\kappa$ ($\kappa \in
\nbb$). Take a Borel function
$\rho\colon \rbb^{\kappa} \to
(0,\infty)$ and define the Borel
measure $\mu_{\rho}$ on $\rbb^{\kappa}$
by
   \begin{align} \label{miuro}
\mu_{\rho}(\varDelta) =
\int_{\varDelta} \rho(x) \D
V_\kappa(x), \quad \varDelta \in
\borel{\rbb^{\kappa}},
   \end{align}
If $\rho(x) = 1$ for every $x\in \rbb^{\kappa}$, we
write $L^2(\rbb^{\kappa})$ in place of
$L^2(\mu_{\rho})$. An affine transformation $T$ of
$\rbb^{\kappa}$ is of the form $T=A+a$, i.e.,
   \begin{align*}
Tx=Ax+a, \quad x\in \rbb^{\kappa},
   \end{align*}
where $A$ is a linear transformation of
$\rbb^\kappa$ and $a\in \rbb^\kappa$. Note that
   \begin{align} \label{iter-ation}
T^n x= A^n x + \sum_{j=0}^{n-1}A^j a, \quad x \in
\rbb^{\kappa}, \, n\in \nbb.
   \end{align}
The identity transformation of $\rbb^\kappa$ is
denoted by $I$. The composition operator
$C_{T,\rho}$ on $L^2(\mu_{\rho})$ is formally
defined by
   \begin{align*}
C_{T,\rho}(f) = f \circ T, \quad f \in
L^2(\mu_{\rho}).
   \end{align*}
The operator $C_{T,\rho}$ is well defined if and only if $A$
is invertible (see, e.g., the proof of
\cite[Theorem~13.1]{jsjs17}), and if this is the case, then
$C_{T,\rho}$ is densely defined and closed.
   \begin{align*}
\text{\em Henceforth, we assume that $A$ is invertible.}
   \end{align*}
This implies that $T$ itself is invertible. The inverse of
$T$, denoted by $S$, is of the form $S=A^{-1} - A^{-1}a$,
i.e.,
   \begin{align*}
Sx=A^{-1}x - A^{-1}a, \quad x\in \rbb^{\kappa}.
   \end{align*}

Now we turn to the class of composition
operators, which we intend to examine in this
section. Let us make the following standing
assumption:
   \begin{align} \label{dens-fin}
   \begin{minipage}{70ex}
$\psi\colon \rbb\to (0,\infty)$ is a continuous
even function and $\rho\colon \rbb^{\kappa} \to
(0,\infty)$ is the density function defined by
   $$ \rho(x)= \psi(\|x\|)^{-1}, \quad
x\in\rbb^{\kappa},
   $$
where $\|x\| = \sqrt{x_1^2 + \ldots +
x_{\kappa}^2}$ for $x=(x_1, \ldots,
x_{\kappa})\in \rbb^{\kappa}$.
   \end{minipage}
   \end{align}
(For simplicity, we do not express explicitly the
dependence of the density $\rho$ on the function
$\psi$ in notation.) Since the measures $V_\kappa$ and
$\mu_{\rho}$ are mutually absolutely continuous, it
follows from \cite[(1.1)]{Da-St97} that if
\eqref{miuro} and \eqref{dens-fin} hold, then
$C_{T,\rho}\in \ogr{L^2(\mu_{\rho})}$ if and only if
$\sup_{x\in \rbb^{\kappa}}
\frac{\psi(\|T(x)\|)}{\psi(\|x\|)} < \infty$, and if
this is the case, then
   \begin{align} \label{nirmy}
\|C_{T,\rho}\|^2 = \frac{1}{|\det A|} \sup_{x\in
\rbb^{\kappa}}\frac{\psi(\|T(x)\|)}{\psi(\|x\|)}.
   \end{align}
Recall that if $C_{T,\rho}\in \ogr{L^2(\mu_{\rho})}$,
then $C_{S,1/\rho} \in \ogr{L^2(\mu_{1/\rho})}$ and
(see \cite[(UE)]{Sto90}; see also
\cite[Remark~$4^{\mathrm o}$, p.\ 403]{Da-St97}):
   \begin{align} \label{ct-sp}
|\det A| C_{T,\rho}^* = U_{\rho}^{-1}
C_{S,1/\rho} U_{\rho},
   \end{align}
where $U_{\rho}\colon L^2(\mu_{\rho}) \to
L^2(\mu_{1/\rho})$ is the unitary isomorphism
defined by $U_{\rho}f=\rho f$ for $f\in
L^2(\mu_{\rho})$. It follows from \eqref{ct-sp}
that $C_{T,\rho}$ is cosubnormal if and only if
$C_{S,1/\rho}$ is subnormal. Using the
change-of-variables theorem (see
\cite[Theorem~7.26]{rud87}) yields
   \begin{align*}
(\mu_{1/\rho} \circ S^{-1})(\Delta) &:=
\mu_{1/\rho} (S^{-1}(\Delta))
   \\
& \hspace{.6ex}= |\det A| \int_{\varDelta}
\frac{\psi(\|T(x)\|)}{\psi(\|x\|)} \D
\mu_{1/\rho}(x), \quad \varDelta \in
\borel{\rbb^{\kappa}},
   \end{align*}
which implies that the Radon-Nikodym derivative
$h_{S,1/\rho} := \D \mu_{1/\rho}\circ S^{-1}/\D
\mu_{1/\rho}$ of $\mu_{1/\rho}\circ S^{-1}$ with
respect to $\mu_{1/\rho}$ is of the form
   \begin{align} \label{exp-sion}
h_{S,1/\rho}(x) = |\det A|
\frac{\psi(\|T(x)\|)}{\psi(\|x\|)}, \quad x \in
\rbb^{\kappa}.
   \end{align}
Using the identity \eqref{iter-ation} and
iterating \eqref{exp-sion}, we see that
   \begin{align*}
h_{S^n,1/\rho}(x) = |\det A|^n
\frac{\psi(\|T^n(x)\|)}{\psi(\|x\|)}, \quad x \in
\rbb^{\kappa}, \, n\in \zbb_+,
   \end{align*}
where $h_{S^n,1/\rho} := \D \mu_{1/\rho}\circ
(S^n)^{-1} / \D \mu_{1/\rho}$. Hence, applying
Lambert's criterion for subnormality (see
\cite[Corollary~4]{Lam88}) to the composition
operator $C_{S,1/\rho}$ and using the fact that
the measures $\mu_{1/\rho}$ and $V_\kappa$ are
mutually absolutely continuous, we get:
   \begin{align} \label{du-du}
   \begin{minipage}{70ex}
{\em $C_{T,\rho} \in \ogr{L^2(\mu_{\rho})}$ is
cosubnormal if and only if
$\{\psi(\|T^n(x)\|)\}_{n=0}^{\infty}$ is a
Stieltjes moment sequence for $V_{\kappa}$-almost
every $x\in \rbb^{\kappa}$.}
   \end{minipage}
   \end{align}
We need the following stronger version of
\eqref{du-du} (cf.\
\cite[Proposition~2.4]{Sto90}).
   \begin{lem} \label{du-du-2}
Suppose that \eqref{miuro} and \eqref{dens-fin} hold.
A composition operator $C_{T,\rho} \in
\ogr{L^2(\mu_{\rho})}$ is cosubnormal if and only if
$\{\psi(\|T^n(x)\|)\}_{n=0}^{\infty}$ is a Stieltjes
moment sequence for every $x\in \rbb^{\kappa}$.
   \end{lem}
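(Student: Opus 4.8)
The plan is to upgrade the ``almost every'' statement \eqref{du-du} to an ``every'' statement by exploiting the rotational invariance of the density $\rho$ together with a scaling argument. The key observation is that the quantity $\psi(\|T^n(x)\|)$ depends on $x$ only through the numbers $\|T^n(x)\|$, and the standing assumption \eqref{dens-fin} makes $\rho$ a radial function. One direction is immediate: if $\{\psi(\|T^n(x)\|)\}_{n=0}^\infty$ is a Stieltjes moment sequence for \emph{every} $x\in\rbb^\kappa$, then in particular it is so for $V_\kappa$-almost every $x$, so $C_{T,\rho}$ is cosubnormal by \eqref{du-du}. The work is in the converse.

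For the converse, suppose $C_{T,\rho}$ is cosubnormal, so by \eqref{du-du} there is a Borel set $E\subseteq\rbb^\kappa$ with $V_\kappa(\rbb^\kappa\setminus E)=0$ such that $\{\psi(\|T^n(x)\|)\}_{n=0}^\infty$ is a Stieltjes moment sequence for every $x\in E$. Fix an arbitrary $x_0\in\rbb^\kappa$; I must produce a single point of $E$ whose forward $T$-orbit has the same sequence of norms as that of $x_0$, i.e.\ $\|T^n(y)\| = \|T^n(x_0)\|$ for all $n\in\zbb_+$, or at least find a sequence in $E$ converging to such a point and pass to the limit. Here is where the structure of $T=A+a$ with $A$ invertible is used. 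Recall from \eqref{iter-ation} that $T^n z = A^n z + \sum_{j=0}^{n-1}A^j a$. A clean way to realize the reduction is: the set of $y$ with $\|T^n(y)\| = \|T^n(x_0)\|$ for all $n$ contains, for instance, the whole orbit closure or an affine subspace when $x_0$ is a fixed point; more robustly, one uses that for each fixed $n$ the map $z\mapsto \|T^n(z)\|$ is continuous, so the set $E$ being conull and the target values being only countably many constraints, a Baire-category / measure-theoretic argument shows $E$ meets every set of positive measure, and in particular one can find $x\in E$ arbitrarily close to $x_0$. Then $\|T^n(x)\|\to\|T^n(x_0)\|$ for each $n$ as $x\to x_0$ through $E$, and since the class of Stieltjes moment sequences is closed under pointwise limits (a representing measure for the limit is obtained by a weak-$*$ compactness argument, using that the sequences are uniformly controlled — indeed each such sequence grows at most factorially because $\psi$ is a hyperbolic cosine transform of a compactly supported measure by Theorem~\ref{maintheorem}, as $C_{T,\rho}$ being bounded forces this), the limiting sequence $\{\psi(\|T^n(x_0)\|)\}_{n=0}^\infty$ is again a Stieltjes moment sequence.

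To make the last step rigorous without circularity, I would argue as follows. Boundedness of $C_{T,\rho}$ gives, via \eqref{nirmy}, a bound $\psi(\|Tx\|)\le \|C_{T,\rho}\|^2\,|\det A|\,\psi(\|x\|)$ for all $x$, and iterating, $\psi(\|T^n x\|)\le (\|C_{T,\rho}\|^2|\det A|)^n\,\psi(\|x\|)$. This provides a uniform (in $x$ ranging over a bounded neighborhood of $x_0$) exponential-in-$n$ bound on the sequences, which is more than enough to guarantee that the representing measures are supported in a common compact interval $[0,r]$; the space of probability-type measures on $[0,r]$ scaled appropriately is weak-$*$ compact, so along a subsequence the representing measures of $\{\psi(\|T^n(x)\|)\}_n$ converge weak-$*$, and the limit measure represents $\{\psi(\|T^n(x_0)\|)\}_n$ by dominated convergence against the continuous functions $t\mapsto t^k$. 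Hence $\{\psi(\|T^n(x_0)\|)\}_n$ is a Stieltjes moment sequence, and since $x_0$ was arbitrary, the proof is complete.

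The main obstacle I anticipate is the geometric reduction step: showing that the conull set $E$ can be used to control \emph{every} orbit, not just almost every one. The cleanest route is the approximation-by-nearby-points argument sketched above, which only needs continuity of $z\mapsto \|T^n z\|$ and the fact that $V_\kappa$-conull sets are dense and meet every ball; combined with the closedness of the Stieltjes class under pointwise limits of sequences with a common compact support bound, this should suffice. An alternative, possibly slicker, approach: use the $L^2(\mu_{1/\rho})$-side and the fact that the Radon–Nikodym derivatives $h_{S^n,1/\rho}$ are \emph{continuous} functions of $x$ (being built from the continuous $\psi$ and the affine $T$), so Lambert's moment condition, which holds a.e., holds everywhere by continuity — this avoids the limiting argument entirely and is likely the intended proof. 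I would pursue this continuity-based argument first.
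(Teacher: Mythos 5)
Your proposal is correct, and its engine is the same as the paper's: the exceptional null set from \eqref{du-du} cannot obstruct any particular point $x_0$ because conull sets are dense and everything in sight is continuous in $x$. The difference lies in how the limiting step is closed. The paper never manipulates representing measures: invoking the Stieltjes theorem \cite[Theorem~6.2.5]{b-ch-r}, it observes that for each finite tuple $\lambdab=\{\lambda_i\}_{i=1}^n\subseteq\cbb$ the two Hankel inequalities $\sum_{i,j}\psi(\|T^{i+j}(x)\|)\lambda_i\bar\lambda_j\Ge 0$ and $\sum_{i,j}\psi(\|T^{i+j+1}(x)\|)\lambda_i\bar\lambda_j\Ge 0$ define \emph{closed} subsets of $\rbb^\kappa$ (continuity of $\psi$ and $T$), which are conull by \eqref{du-du}, hence equal to $\rbb^\kappa$ since a nonempty open set has positive Lebesgue measure; the Stieltjes theorem then converts the everywhere-valid inequalities back into the moment property at every $x$. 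This makes the class of Stieltjes moment sequences closed under pointwise convergence for free, so your weak-$*$ compactness argument — while valid, given the uniform support bound you correctly extract from \eqref{nirmy} — is heavier machinery than needed. Two smaller remarks: your parenthetical claim that boundedness of $C_{T,\rho}$ forces $\psi$ to be the hyperbolic cosine transform of a compactly supported measure is false (Theorem~\ref{maintheorem} plays no role here, and boundedness alone gives no such structure; fortunately you replace this by the direct iterate bound $\psi(\|T^nx\|)\Le(|\det A|\,\|C_{T,\rho}\|^2)^n\psi(\|x\|)$, which is what actually carries the argument); and the musings about rotational invariance, orbit closures and Baire category in your first reduction paragraph are dispensable — all that is used is that a conull set meets every ball. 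Your closing guess that a continuity-based argument is the intended one is accurate.
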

   \begin{proof}
In view of \eqref{du-du}, it suffices to prove the
``only if'' part. Assume that $C_{T,\rho}$ is
cosubnormal. First, note that if $n\in \nbb$ and
$\lambdab =\{\lambda_i\}_{i=1}^n \subseteq \cbb$, then
the set
   \begin{align*}
\varOmega_{\lambdab} := \Big\{x\in
\rbb^{\kappa}\colon \sum_{i,j=0}^n
\psi(\|T^{i+j}(x)\|) \lambda_i\bar\lambda_j \Ge
0\Big\}
   \end{align*}
is closed. By the Stieltjes theorem (see
\cite[Theorem~6.2.5]{b-ch-r}) and \eqref{du-du},
we see that $V_{\kappa}(\rbb^{\kappa}\backslash
\varOmega_{\lambdab})=0$. Since the set
$\rbb^{\kappa}\backslash\varOmega_{\lambdab}$ is
open, it must be empty, so
$\varOmega_{\lambdab}=\rbb^{\kappa}$. Hence
$\{x\in \rbb^{\kappa}\colon T(x) \in
\varOmega_{\lambdab}\}=\rbb^{\kappa}$.
Summarizing, we have proved that
   \begin{align*}
\text{$\sum_{i,j=0}^n \psi(\|T^{i+j}(x)\|)
\lambda_i\bar\lambda_j \Ge 0$ and $\sum_{i,j=0}^n
\psi(\|T^{i+j+1}(x)\|) \lambda_i\bar\lambda_j \Ge
0$,}
   \end{align*}
for all finite sequences $\{\lambda_i\}_{i=1}^n
\subseteq \cbb$ and for every $x\in
\rbb^{\kappa}$. Combined with the nontrivial part
of the Stieltjes theorem, this shows that for
every $x\in \rbb^{\kappa}$, the sequence
$\{\psi(\|T^n(x)\|)\}_{n=0}^{\infty}$ is a
Stieltjes moment sequence.
   \end{proof}
The question of the existence of cohyponormal
composition operators $C_{A+a,\rho}$ with a nontrivial
translation part was studied in detail in
\cite[Sections~4 and 5]{Da-St97} (see also
\cite[Theorem~2.2]{Da-St97}). Roughly speaking, the
answer is negative in all but one case, when $\|A\| =
1$, $a \notin \ob{I - AA^*}$ and $\limsup_{x\to\infty}
\psi'(x)/\psi(x) < \infty$. The last inequality is
automatically satisfied if $\psi$ is the hyperbolic
cosine transform of a compactly supported Borel
measure on $\rbb_+$ (see Theorem~\ref{limsupq-1}). If
$A=I$, then trivially $\ob{I - AA^*} = \{0\}$. This
case is analyzed in Theorems~\ref{wkw-uch} and
\ref{blis-zer}, and Proposition~\ref{blis-zur}. If
$\|A\|=1$ and $a \in \jd{I-A} \setminus \{0\}$, then
$a \notin \ob{I - AA^*}$ (see \eqref{trus}). This
issue, in turn, is examined in
Propositions~\ref{impi-ch} and \ref{wkw-ukh}. Finally,
the case when $\|A\| = 1$ and $a \notin \ob{I - AA^*}$
is discussed in Proposition~\ref{wkw-1}
for~$\kappa=1$.
   \subsection{\label{Sec.3.2}Cosubnormality {\em versus} hyperbolic cosine
transform} We begin this section by showing that if a
$C_0$-semigroup $\{C_{I+ta_0,\rho}\}_{t\in \rbb_+}$ of
composition operators induced by translations consists
of cosubnormal operators, where $a_0 \in \rbb^{\kappa}
\backslash \{0\}$, then the function $\psi$ must be
the hyperbolic cosine transform of a compactly
supported Borel measure on $\rbb_+$, and {\em vice
versa}. Before doing so, we will prove the following
general fact. It is worth emphasizing here that
$\lim_{x\to\infty}
\displaystyle{\frac{\psi^{\prime}(x)}{\psi(x)}}$ may
not exist for $\psi \in \mathscr{H}$ (see
\cite{Da96}).
   \begin{lem} \label{ciugl}
Suppose that \eqref{miuro} and \eqref{dens-fin} hold
and either $\psi=\mathrm{const}$ or $\psi$ is a member
of $\mathscr{H}$ such that $\limsup_{x\to\infty}
\displaystyle{\frac{\psi^{\prime}(x)}{\psi(x)}} <
\infty$. Then $C_{I+a,\rho} \in \ogr{L^2(\mu_{\rho})}$
for every $a\in \rbb^{\kappa}$, and the map
$\rbb^{\kappa} \ni a \longmapsto C_{I+a,\rho}\in
\ogr{L^2(\mu_{\rho})}$ is continuous in the strong
operator topology.
   \end{lem}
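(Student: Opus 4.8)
The plan is to reduce the whole statement to a single scalar estimate: that $\sup_{t\ge 0}\psi(t+s)/\psi(t)$ is finite for each fixed $s\ge 0$ and stays bounded as $s$ ranges over a compact set. Granting this, boundedness of $C_{I+a,\rho}$ will be immediate from the norm formula \eqref{nirmy} with $T=I+a$ (so $A=I$, $|\det A|=1$), and strong continuity will follow from the classical density argument for a family that acts, modulo the fixed weight $\rho$, by translation. I would first dispose of the alternative $\psi=\mathrm{const}$: then $\rho$ is constant, $C_{I+a,\rho}$ is the translation operator on $L^2(\rbb^\kappa)$, hence unitary, and $a\mapsto C_{I+a,\rho}$ is the classical strongly continuous translation group; this case is in any event covered verbatim by the argument below, with all ratios equal to $1$.

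So assume $\psi\in\mathscr{H}$ with $\beta:=\limsup_{x\to\infty}\psi'(x)/\psi(x)<\infty$. Using $\psi\colon\rbb\to(0,\infty)$ together with membership in $\mathscr{H}$, I would first record that the entire extension of $\psi$ has the form $\alpha_0+\sum_{n\ge k}\alpha_n z^n$ with $\alpha_0=\psi(0)>0$ and $\alpha_n\ge 0$, so that $\psi$ is nondecreasing on $\rbb_+$, $\psi(t)\ge\psi(0)>0$ for $t\ge 0$, and $\psi'\ge 0$ there (hence $\beta\ge 0$). Then, fixing $\varepsilon>0$ and choosing $R$ with $\psi'(u)/\psi(u)\le\beta+\varepsilon$ for all $u\ge R$, integration of $(\log\psi)'$ over $[t,t+s]$ gives $\psi(t+s)/\psi(t)\le e^{(\beta+\varepsilon)s}$ whenever $t\ge R$, while for $t\in[0,R]$ monotonicity gives the crude bound $\psi(t+s)/\psi(t)\le\psi(R+s)/\psi(0)$. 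Hence, for every $M\in\rbb_+$,
\[
C_M\;:=\;\sup_{0\le s\le M}\ \sup_{t\ge 0}\frac{\psi(t+s)}{\psi(t)}\;\le\;\max\Big\{e^{(\beta+\varepsilon)M},\ \psi(R+M)/\psi(0)\Big\}\;<\;\infty .
\]
I expect this step — converting a $\limsup$-bound on the logarithmic derivative into a multiplicative ratio bound that is uniform over a compact range of shifts — to be the only genuine obstacle; the rest is bookkeeping.

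With the estimate in hand, the triangle inequality and monotonicity of $\psi$ on $\rbb_+$ give $\psi(\|x+a\|)\le\psi(\|x\|+\|a\|)$, so $\sup_{x\in\rbb^\kappa}\psi(\|x+a\|)/\psi(\|x\|)\le\sup_{t\ge 0}\psi(t+\|a\|)/\psi(t)<\infty$; by the boundedness criterion accompanying \eqref{nirmy} this yields $C_{I+a,\rho}\in\ogr{L^2(\mu_{\rho})}$ for every $a$, with $\|C_{I+a,\rho}\|^2\le C_{\|a\|}$, and in particular $\sup_{\|a\|\le M}\|C_{I+a,\rho}\|\le\sqrt{C_M}<\infty$ for each $M$. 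Finally, for strong continuity at an arbitrary $a_0\in\rbb^\kappa$: since $\mu_{\rho}$ is a Radon measure on $\rbb^\kappa$, the space $C_c(\rbb^\kappa)$ of continuous compactly supported functions is dense in $L^2(\mu_{\rho})$. Given $f\in L^2(\mu_{\rho})$ and $\varepsilon>0$, I would pick $g\in C_c(\rbb^\kappa)$ with $\|f-g\|<\varepsilon$; for $\|a-a_0\|\le 1$ all the functions $g(\cdot+a)$ and $g(\cdot+a_0)$ are supported in one fixed compact set $K$, on which $\rho$ is bounded and $V_\kappa(K)<\infty$, so $\|C_{I+a,\rho}g-C_{I+a_0,\rho}g\|^2\le(\sup_K\rho)\,V_\kappa(K)\,\|g(\cdot+a)-g(\cdot+a_0)\|_\infty^2$, which tends to $0$ as $a\to a_0$ by the uniform continuity of $g$. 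Splitting $C_{I+a,\rho}f-C_{I+a_0,\rho}f$ into $C_{I+a,\rho}(f-g)$, $C_{I+a,\rho}g-C_{I+a_0,\rho}g$ and $C_{I+a_0,\rho}(g-f)$ and bounding the outer two terms by $\sqrt{C_M}\,\varepsilon$ with $M=\|a_0\|+1$, one obtains $\limsup_{a\to a_0}\|C_{I+a,\rho}f-C_{I+a_0,\rho}f\|\le 2\sqrt{C_M}\,\varepsilon$; letting $\varepsilon\downarrow 0$ then finishes the proof.
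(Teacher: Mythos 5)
Your proof is correct and follows essentially the same route as the paper: reduce boundedness to the scalar estimate $\sup_{\|a\|\Le\theta}\sup_{x}\psi(\|x+a\|)/\psi(\|x\|)\Le\sup_{t\Ge 0}\psi(t+\theta)/\psi(t)<\infty$ via the triangle inequality and monotonicity of $\psi$ on $\rbb_+$, and then obtain strong continuity from the resulting uniform norm bound on $\{a\colon\|a\|\Le\theta\}$ combined with a density argument over continuous compactly supported functions. The only difference is that where the paper quotes \cite[Lemma~1.1(ii)]{Da-St97} for the ratio estimate, you prove it directly by integrating the logarithmic derivative on $[R,\infty)$ and using monotonicity together with $\psi(0)>0$ on $[0,R]$ --- a correct, self-contained substitute.
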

   \begin{proof}
Set $\varDelta_{\theta}=\{x \in \rbb^{\kappa}\colon
\|x\| \Le \theta\}$ for $\theta>0$. Since $\psi$ is
monotonically increasing on $\rbb_+$ and
$\limsup_{x\to\infty}
\displaystyle{\frac{\psi^{\prime}(x)}{\psi(x)}} <
\infty$, we infer from \cite[Lemma~1.1(ii)]{Da-St97}
with $\varphi=\psi$ and $\tau=0$ that for every
$\theta >0$,
   \begin{align*}
\sup_{a \in \varDelta_{\theta}} \sup_{x\in
\rbb^{\kappa}} \frac{\psi(\|x+a\|)}{\psi(\|x\|)} \Le
\sup_{a \in \varDelta_{\theta}} \sup_{x\in
\rbb^{\kappa}} \frac{\psi(\|x\|+\|a\|)}{\psi(\|x\|)}
\Le \sup_{x\in \rbb^{\kappa}}
\frac{\psi(\|x\|+\theta)}{\psi(\|x\|)} < \infty.
   \end{align*}
This together with \eqref{nirmy} implies that
$C_{I+a,\rho} \in \ogr{L^2(\mu_{\rho})}$ for every
$a\in \rbb^{\kappa}$, and
   \begin{align} \label{kdwq}
\sup_{a \in \varDelta_{\theta}} \|C_{I+a, \rho}\| <
\infty, \quad \theta >0.
   \end{align}

Next, we show that $\lim_{h\to 0} C_{I+b+h, \rho}f =
C_{I+b, \rho}f$ for all $f\in L^2(\mu_{\rho})$ and
$b\in \rbb^{\kappa}$. Fix $b \in \rbb^{\kappa}$. Let
$f\colon \rbb^{\kappa} \to \cbb$ be a continuous
function with compact support. Let $r>0$ be such that
$\varDelta_r$ contains the support of $f$. Set $R = r
+ \|b\| + 1$. Observe that $f$ is uniformly
continuous. Therefore, for $\varepsilon> 0$, there
exists $\delta \in (0,1)$ such that
   \begin{align} \label{jak-num-0}
\forall x,y \in \rbb^{\kappa}\colon \|x-y\| \Le \delta
\implies |f(x)-f(y)| \Le \varepsilon.
   \end{align}
Then
   \begin{align} \label{jak-num}
\forall x \in \rbb^{\kappa}\backslash \varDelta_R \;
\; \forall h\in \varDelta_{\delta}\colon f(x+b+h)=0.
   \end{align}
It follows that \allowdisplaybreaks
   \begin{align*}
\|C_{I+b+h, \rho}f - C_{I+b, \rho}f\|^2 & =
\int_{\rbb^{\kappa}} |f(x+b+h) - f(x+b)|^2
\frac{1}{\psi(\|x\|)}\D V_{\kappa} (x)
   \\
&\hspace{1ex}
\hspace{-1.7ex}\overset{\eqref{jak-num}}=
\int_{\varDelta_R} |f(x+b+h) - f(x+b)|^2
\frac{1}{\psi(\|x\|)} \D V_{\kappa} (x)
   \\
& \hspace{-.9ex} \overset{\eqref{jak-num-0}} \Le
\varepsilon^2 \int_{\varDelta_R} \frac{1}{\psi(\|x\|)}
\D V_{\kappa} (x) , \quad h\in \varDelta_{\delta},
   \end{align*}
so $\lim_{h\to 0} C_{I+b+h, \rho}f = C_{I+b, \rho}f$
for every continuous function $f\colon \rbb^{\kappa}
\to \cbb$ with compact support. Using \eqref{kdwq} and
the fact that continuous complex functions on
$\rbb^{\kappa}$ with compact support are dense in
$L^2(\mu_{\rho})$ (see \cite[Theorem~3.14]{rud87}), we
conclude that the map $\rbb^{\kappa} \ni a \longmapsto
C_{I+a,\rho}\in \ogr{L^2(\mu_{\rho})}$ is continuous
in the strong operator topology. This completes the
proof.
   \end{proof}
We can now prove the main result of this paper.
   \begin{thm} \label{wkw-uch}
Suppose that \eqref{miuro} and \eqref{dens-fin} hold
and $a_0 \in \rbb^{\kappa} \backslash \{0\}$. Then the
following conditions are equivalent{\em :}
   \begin{enumerate}
   \item[(i)] $C_{I+ta_0,\rho}$ is a bounded cosubnormal
operator on $L^2(\mu_{\rho})$ for every $t\in \rbb_+$ and
$\lim_{t \to 0+} C_{I+ta_0, \rho}=C_{I,\rho}$ in the strong
operator topology,
   \item[(ii)] $\psi$ is the hyperbolic cosine transform of a
compactly supported Borel measure on $\rbb_+$.
   \end{enumerate}
Moreover, if {\em (ii)} holds, then $C_{I+a,\rho} \in
\ogr{L^2(\mu_{\rho})}$ and $C_{I+a,\rho}$ is
cosubnormal for every $a\in \rbb^{\kappa}$, and the
map $\rbb^{\kappa} \ni a \longmapsto C_{I+a,\rho}\in
\ogr{L^2(\mu_{\rho})}$ is continuous in the strong
operator topology.
   \end{thm}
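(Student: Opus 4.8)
The plan is to prove the two implications (i)$\Rightarrow$(ii) and (ii)$\Rightarrow$(i) separately, and then read off the ``moreover'' part along the way. For (i)$\Rightarrow$(ii), I would use the criterion \eqref{du-du} (or rather its sharp version, Lemma~\ref{du-du-2}): for each fixed $t\in\rbb_+$, cosubnormality of $C_{I+ta_0,\rho}$ means that $\{\psi(\|x+nta_0\|)\}_{n=0}^\infty$ is a Stieltjes moment sequence for every $x\in\rbb^\kappa$. Specializing $x$ to a suitable multiple of $a_0$ (or using that $x$ ranges over all of $\rbb^\kappa$) one gets, for each $\xi\in\rbb$ and each $t>0$, that $\{\psi(|\xi+nt|\,\|a_0\|)\}_{n=0}^\infty$ is a Stieltjes moment sequence; since $\psi$ is even this is just $\{\psi((\xi+nt)\|a_0\|)\}_{n=0}^\infty$. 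Rescaling $t$ by $\|a_0\|$, this is exactly condition (iii) of Lemma~\ref{kiedy-cosub-cosh} (positive definiteness of $n\mapsto\psi(\xi+nt)$ on $(\zbb_+,+,n^*=n)$ for all $\xi,t\in\rbb$). Since $\psi$ is continuous by \eqref{dens-fin}, Lemma~\ref{kiedy-cosub-cosh} yields that $\psi$ is exponentially convex. Together with continuity and evenness (both from \eqref{dens-fin}), Theorem~\ref{mmainth} gives that $\psi$ is the hyperbolic cosine transform of a Borel measure $\nu$ on $\rbb_+$. To upgrade $\nu$ to a \emph{compactly supported} measure, I would invoke boundedness of $C_{I+ta_0,\rho}$ for a single fixed $t>0$: by \eqref{nirmy}, $\sup_{x}\psi(\|x+ta_0\|)/\psi(\|x\|)<\infty$, and choosing $x=sa_0/\|a_0\|$ with $s\to\infty$ forces $\sup_{s>0}\psi(s+t\|a_0\|)/\psi(s)<\infty$; a standard subadditivity/iteration argument then gives $\psi(s)\le ae^{bs}$ for some $a,b\in\rbb_+$, i.e.\ \eqref{coshtr2}. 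By Theorem~\ref{maintheorem}, $\nu$ is compactly supported, which is (ii). (Alternatively one could cite Theorem~\ref{limsupq-1}(iii), since $\limsup_{x\to\infty}\psi'(x)/\psi(x)<\infty$ follows from the same growth bound.)

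For (ii)$\Rightarrow$(i), assume $\psi(x)=\int_{\rbb_+}\cosh(xu)\,\D\nu(u)$ with $\nu$ compactly supported, say $\supp\nu\subseteq[0,c]$. Then $\psi\in\mathscr{H}_{2\bullet}\subseteq\mathscr{H}$ (or $\psi=\mathrm{const}$ if $\nu((0,\infty))=0$) by Lemma~\ref{wkw-supp-zw} and Theorem~\ref{limsupq-1}(i), and $\lim_{x\to\infty}\psi'(x)/\psi(x)=\sup\supp\nu<\infty$ by Theorem~\ref{limsupq-1}(ii); in particular $\limsup_{x\to\infty}\psi'(x)/\psi(x)<\infty$. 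Lemma~\ref{ciugl} then delivers at once that $C_{I+a,\rho}\in\ogr{L^2(\mu_{\rho})}$ for every $a\in\rbb^\kappa$ and that $a\mapsto C_{I+a,\rho}$ is SOT-continuous; restricting to $a=ta_0$ gives the continuity clause of (i) and, simultaneously, the ``moreover'' part of the theorem. It remains to check cosubnormality of each $C_{I+ta_0,\rho}$. By Lemma~\ref{du-du-2} it suffices to show that $\{\psi(\|x+nta_0\|)\}_{n=0}^\infty$ is a Stieltjes moment sequence for every $x\in\rbb^\kappa$. Here is where the BMV-type input enters: writing $\|x+nta_0\|^2=\|x\|^2+2nt\is{x}{a_0}+n^2t^2\|a_0\|^2$, a quadratic in $n$ with nonnegative leading coefficient, one has, after completing the square, $\|x+nta_0\|^2=\alpha(n-\beta)^2+\gamma$ with $\gamma\ge 0$... actually more convenient is to write $\|x+nta_0\|^2 = \xi(n+\theta)^2+\eta$ with $\xi=t^2\|a_0\|^2>0$, $\eta\ge 0$; hence $\psi(\|x+nta_0\|)=\int_{\rbb_+}\cosh\!\big(\sqrt{\xi(n+\theta)^2+\eta}\;u\big)\D\nu(u)=\int_{\rbb_+}\varphi_{\xi u^2,\eta u^2}(n+\theta)\,\D\nu(u)$ in the notation of Theorem~\ref{Katz}. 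Each $\varphi_{\xi u^2,\eta u^2}$ is exponentially convex by Theorem~\ref{Katz}, so $n\mapsto\varphi_{\xi u^2,\eta u^2}(n+\theta)$ is positive definite on $(\zbb_+,+,n^*=n)$, hence (being bounded by the compact support of $\nu$ — the integrand is dominated by $\cosh(c\sqrt{\xi(n+\theta)^2+\eta}\,)$ which grows at most exponentially in $n$) defines a Stieltjes moment sequence via the Stieltjes theorem \cite[Theorem~6.2.5]{b-ch-r}, once one also checks the shifted sequence is positive definite. Integrating against $\nu$ preserves positive definiteness and the growth bound, so $\{\psi(\|x+nta_0\|)\}_{n}$ is itself a Stieltjes moment sequence. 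This gives cosubnormality and completes (ii)$\Rightarrow$(i).

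I expect the main obstacle to be the bookkeeping in the two places where one passes between ``positive definite on $(\zbb_+,+,n^*=n)$'' and ``Stieltjes moment sequence'': for the Stieltjes (as opposed to Hamburger) moment problem one must verify positive definiteness of \emph{both} $\{c_n\}$ and the shifted sequence $\{c_{n+1}\}$, and here $c_n=\psi(\|x+nta_0\|)$ corresponds to the exponentially convex function evaluated at $n+\theta$ versus $n+1+\theta$ — one needs the shift-invariance built into Theorem~\ref{Katz} (the function $x\mapsto\varphi_{\xi,\eta}(x+s)$ is again exponentially convex, being a translate, hence its restriction to $\zbb_+$ is positive definite) together with a uniform exponential growth bound to apply the Stieltjes theorem; keeping the constants uniform in $u\in\supp\nu$ so that the integration against $\nu$ goes through is the delicate point. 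A secondary subtlety is ensuring one genuinely recovers \emph{all} $x\in\rbb^\kappa$ in the direction argument of (i)$\Rightarrow$(ii) rather than only $x$ collinear with $a_0$; but since Lemma~\ref{du-du-2} already provides the moment property for \emph{every} $x$, one is free to specialize, and taking $x=\xi a_0/\|a_0\|$ with arbitrary $\xi\in\rbb$ suffices to feed Lemma~\ref{kiedy-cosub-cosh}.
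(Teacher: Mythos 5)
Your proposal is correct and follows essentially the same route as the paper's proof: for (i)$\Rightarrow$(ii) it combines Lemma~\ref{du-du-2}, the specialization $x=\xi a_0/\|a_0\|$, Lemma~\ref{kiedy-cosub-cosh} and Theorem~\ref{maintheorem}, and for (ii)$\Rightarrow$(i) it uses Theorem~\ref{limsupq-1} with Lemma~\ref{ciugl} for boundedness and SOT-continuity and the decomposition $\|x+na\|^2=\xi(n+\theta)^2+\eta$ together with Theorem~\ref{Katz} and the Stieltjes theorem for cosubnormality. The only minor deviations are that you derive the growth bound \eqref{coshtr2} by iterating the boundedness of a single $C_{I+ta_0,\rho}$ rather than via Pazy's estimate for the $C_0$-semigroup $\{C_{I+ta_0,\rho}\}_{t\in\rbb_+}$ as the paper does (both work), and that your worry about a ``uniform exponential growth bound'' is unnecessary: the Stieltjes theorem of \cite{b-ch-r} requires only positive definiteness of the sequence and of its shift, which the paper secures exactly by the $\varepsilon\in\{0,1\}$ device you gesture at.
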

   \begin{proof}
(i)$\Rightarrow$(ii) Set $\varLambda_t =
C_{I+ta_0,\rho}$ for $t\in \rbb_+$. It is easily seen
that $\varLambda_{0}$ is the identity operator on
$L^2(\mu_{\rho})$ and $\varLambda_{s+t} =
\varLambda_{s}\varLambda_{t}$ for all $s,t \in
\rbb_+$. Hence, by the continuity hypothesis,
$\{\varLambda_t\}_{t\in \rbb_+}$ is a $C_0$-semigroup.
It follows from \cite[Theorem~1.2.2]{Paz83} that there
exist $\omega \in \rbb_+$ and $M\in [1,\infty)$ such
that
   \begin{align} \label{num-er}
\|\varLambda_t\|^2 \Le M \E^{\omega t}, \quad t
\in \rbb_+.
   \end{align}
By \eqref{nirmy}, we have
   \begin{align*}
\|\varLambda_t\|^2 = \sup_{x\in \rbb^{\kappa}}
\frac{\psi(\|x+ta_0\|)}{\psi(\|x\|)}, \quad t \in
\rbb_+.
   \end{align*}
This together with \eqref{num-er} implies that
   \begin{align} \label{sia-su}
\psi(\|x+ta_0\|) \Le M \psi(\|x\|) \E^{\omega
t}, \quad x \in \rbb^{\kappa}, \, t \in \rbb_+.
   \end{align}
Substituting $x=0$ and $t=\frac{s}{\|a_0\|}$
into \eqref{sia-su}, we see that
   \begin{align*}
\psi(s) \Le M_1 \E^{\omega_1 s}, \quad s\in
\rbb_+.
   \end{align*}
where $M_1 = M \psi(0)$ and
$\omega_1=\frac{\omega}{\|a_0\|}$. Since $\psi$
is even, we have
   \begin{align} \label{psim}
\psi(s) \Le M_1 \E^{\omega_1 |s|}, \quad s\in
\rbb.
   \end{align}
It follows from \eqref{iter-ation} and
Lemma~\ref{du-du-2} applied to $T=I+ta_0$ that the
sequence $\{\psi(\|x + n t a_0\|)\}_{n=0}^{\infty}$ is
a Stieltjes moment sequence for all $x\in
\rbb^{\kappa}$ and $t\in \rbb_+$. In particular,
substituting $x=\frac{\xi}{\|a_0\|}a_0$ and $t=
\frac{s}{\|a_0\|}$ and using the assumption that
$\psi$ is even, we deduce that for all $\xi\in \rbb$
and $s\in \rbb_+$, the sequence $\{\psi(\xi + n
s)\}_{n=0}^{\infty}$ is a Stieltjes moment sequence,
which as such is positive definite on the
$*$-semigroup $(\zbb_+,+,n^*=n)$. Applying
Lemma~\ref{kiedy-cosub-cosh}, w conclude that $\psi$
is exponentially convex. Hence, by \eqref{psim} and
Theorem~\ref{maintheorem} , $\psi$ is the hyperbolic
cosine transform of a compactly supported Borel
measure on $\rbb_+$.

(ii)$\Rightarrow$(i) Suppose that $\psi$ is the hyperbolic
cosine transform of a compactly supported Borel measure $\nu$
on $\rbb_+$. Since $\psi\neq 0$, $\nu\neq 0$. If $\supp{\nu}
= \{0\}$, then $\psi=\mathrm{const}$. In turn, if $\supp{\nu}
\neq \{0\}$, then, by Theorem~\ref{limsupq-1}(i), $\psi\in
\mathscr{H}$. It follows from Theorem~\ref{limsupq-1}(iii)
and Lemma~\ref{ciugl} that $C_{I+a,\rho} \in
\ogr{L^2(\mu_{\rho})}$ for every $a\in \rbb^{\kappa}$, and
the map $\rbb^{\kappa} \ni a \longmapsto C_{I+a,\rho}\in
\ogr{L^2(\mu_{\rho})}$ is continuous in the strong operator
topology. In particular, $\{C_{I+ta_0,\rho}\}_{t\in \rbb_+}$
is a $C_0$-semigroup.

Next, we will show that $C_{I+a, \rho}$ is cosubnormal
for every $a\in \rbb^{\kappa}$. Fix $a\in
\rbb^{\kappa} \backslash \{0\}$ (the case $a=0$ is
trivial). Let $\varphi_{\xi\eta}$ be as in
Theorem~\ref{Katz}. Denote by $P$ the orthogonal
projection of $\rbb^{\kappa}$ onto $\langle a\rangle$,
the one-dimensional span of $\{a\}$. Let $P^{\perp}$
stand for the orthogonal projection of $\rbb^{\kappa}$
onto $\langle a\rangle^{\perp}$. Clearly,
$P^{\perp}=I-P$ and $Px = s_xa$ for $x\in
\rbb^{\kappa}$, where $s_x =
\frac{\is{x}{a}}{\|a\|^2}$. Fix $u\in \rbb_+$, $x\in
\rbb^{\kappa}$ and $\varepsilon \in \{0,1\}$. Then
   \begin{align*}
\cosh(u\|x + n a\|) & =
\cosh\Big(\sqrt{\|ua\|^2(n+s_x)^2 +
\|uP^{\perp}x\|^2}\Big)
   \\
&= \varphi_{\xi\eta}(n+s_x), \quad n\in \zbb_+,
   \end{align*}
where $\xi=\|ua\|^2$ and $\eta=\|uP^{\perp}x\|^2$.
Hence, by Theorem~\ref{Katz}, we have
   \begin{multline*}
\sum_{i,j=1}^{m} \cosh(u\|x+(n_i+n_j+\varepsilon)a\|)
\lambda_i\bar\lambda_j
   \\
= \sum_{i,j=1}^{m} \varphi_{\xi\eta}\Big(\Big(n_i +
\frac{s_x+\varepsilon}{2}\Big)+ \Big(n_j +
\frac{s_x+\varepsilon}{2}\Big)\Big)
\lambda_i\bar\lambda_j \Ge 0,
   \end{multline*}
for all finite sequences
$\{\lambda_j\}_{j=1}^{m}\subseteq \cbb$ and
$\{n_j\}_{j=1}^{m} \subseteq \zbb_+$. Thus, the
sequence $\{\cosh(u\|x+(n+\varepsilon)
a\|)\}_{n=0}^{\infty}$ is positive definite on the
$*$-semigroup $(\zbb_+,+,n^*=n)$ for $\varepsilon=0,1$
and $u\in \rbb_+$. In view of \eqref{ch-tr-1}, the
sequence $\{\psi(\|x+(n+\varepsilon)
a\|)\}_{n=0}^{\infty}$ is positive definite on the
$*$-semigroup $(\zbb_+,+,n^*=n)$ for
$\varepsilon=0,1$. By the Stieltjes theorem (see
\cite[Theorem~6.2.5]{b-ch-r}), the sequence
$\{\psi(\|x+n a\|)\}_{n=0}^{\infty}$ is a Stieltjes
moment sequence for every $x\in \rbb^{\kappa}$.
Combined with Lemma~\ref{du-du-2}, this implies that
$C_{I+a, \rho}$ is cosubnormal. This completes the
proof.
   \end{proof}
The following is a direct consequence of Lemma~
\ref{ciugl} and Theorem~\ref{wkw-uch}.
   \begin{cor} \label{winej}
Suppose that \eqref{miuro} and \eqref{dens-fin} hold
and either $\psi=\mathrm{const}$ or $\psi$ is a member
of $\mathscr{H}$ such that $\limsup_{x\to\infty}
\displaystyle{\frac{\psi^{\prime}(x)}{\psi(x)}} <
\infty$. Then $C_{I+a,\rho} \in \ogr{L^2(\mu_{\rho})}$
for every $a\in \rbb^{\kappa}$, and the following
conditions are equivalent{\em :}
   \begin{enumerate}
   \item[(i)] $C_{I+a,\rho}$ is cosubnormal for every $a\in
\rbb^{\kappa}$,
   \item[(ii)] $\psi$ is the hyperbolic cosine transform of a
compactly supported Borel measure on $\rbb_+$.
   \end{enumerate}
   \end{cor}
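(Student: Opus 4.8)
The plan is to read off both assertions directly from Lemma~\ref{ciugl} and Theorem~\ref{wkw-uch}; the corollary is genuinely a short consequence. First, the standing hypothesis of the corollary (either $\psi=\mathrm{const}$ or $\psi\in\mathscr{H}$ with $\limsup_{x\to\infty}\psi^{\prime}(x)/\psi(x)<\infty$, together with \eqref{miuro} and \eqref{dens-fin}) is exactly the hypothesis of Lemma~\ref{ciugl}. So that lemma immediately gives $C_{I+a,\rho}\in\ogr{L^2(\mu_{\rho})}$ for every $a\in\rbb^{\kappa}$, which settles the boundedness claim, and in addition yields the (stronger than presently needed) fact that the map $\rbb^{\kappa}\ni a\longmapsto C_{I+a,\rho}\in\ogr{L^2(\mu_{\rho})}$ is continuous in the strong operator topology. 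This continuity is the ingredient that will be invoked below.

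For the implication (ii)$\Rightarrow$(i): if $\psi$ is the hyperbolic cosine transform of a compactly supported Borel measure on $\rbb_+$, then the ``moreover'' part of Theorem~\ref{wkw-uch} asserts precisely that $C_{I+a,\rho}$ is cosubnormal for every $a\in\rbb^{\kappa}$, which is condition (i).

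For the implication (i)$\Rightarrow$(ii): fix any $a_0\in\rbb^{\kappa}\setminus\{0\}$. By (i), $C_{I+ta_0,\rho}$ is a bounded cosubnormal operator on $L^2(\mu_{\rho})$ for every $t\in\rbb_+$ (since $ta_0\in\rbb^{\kappa}$), and by the strong operator continuity of $a\mapsto C_{I+a,\rho}$ coming from Lemma~\ref{ciugl} we have $\lim_{t\to 0+}C_{I+ta_0,\rho}=C_{I,\rho}$ in the strong operator topology. Hence condition (i) of Theorem~\ref{wkw-uch}, stated with this $a_0$, is satisfied, and that theorem delivers (ii).

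There is essentially no obstacle; the one small point worth verifying carefully is that condition (i) of the corollary supplies all the data required to invoke condition (i) of Theorem~\ref{wkw-uch}, namely cosubnormality along the whole ray $\{ta_0\colon t\in\rbb_+\}$ (which is contained in the set of $a$'s over which (i) of the corollary quantifies) together with strong continuity of $t\mapsto C_{I+ta_0,\rho}$ at $t=0$ (which holds unconditionally under the standing hypotheses, by Lemma~\ref{ciugl}). Once this is observed, the proof is merely the chain of citations just described.
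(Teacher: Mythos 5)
Your proof is correct and is exactly the argument the paper intends: the paper's own justification is the one-line remark that the corollary ``is a direct consequence of Lemma~\ref{ciugl} and Theorem~\ref{wkw-uch},'' and your write-up simply makes explicit how the boundedness and SOT-continuity from Lemma~\ref{ciugl} supply the hypotheses of Theorem~\ref{wkw-uch}(i) for a fixed $a_0\neq 0$, while the ``moreover'' part of that theorem gives (ii)$\Rightarrow$(i).
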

The implication (i)$\Rightarrow$(ii) of
Theorem~\ref{wkw-uch} can be generalized as follows
(see also Remark~ \ref{cos-to}).
   \begin{pro} \label{impi-ch}
Suppose that \eqref{miuro} and \eqref{dens-fin} hold. Assume
also that $a_0 \in \jd{I - A} \backslash \{0\}$,
$C_{A+ta_0,\rho}$ is a bounded cosubnormal operator on
$L^2(\mu_{\rho})$ for every $t\in \rbb_+$ and there exists
$\varepsilon \in (0,\infty)$ such that
   \begin{align}  \label{srup}
\sup_{0 \Le t \Le \varepsilon}
\|C_{A+ta_0,\rho}\| < \infty.
   \end{align}
Then $\psi$ is the hyperbolic cosine transform
of a compactly supported Borel measure on
$\rbb_+$.
   \end{pro}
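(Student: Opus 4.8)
The plan is to verify conditions \mbox{(ii-a)}--\mbox{(ii-d)} of Theorem~\ref{maintheorem} for $\psi$ and then invoke that theorem. Conditions \mbox{(ii-b)} (continuity) and \mbox{(ii-c)} (evenness) are built into the standing assumption~\eqref{dens-fin}, so the real work is \mbox{(ii-a)} (exponential convexity) and \mbox{(ii-d)} (at most exponential growth). The argument follows the scheme of the proof of (i)$\Rightarrow$(ii) in Theorem~\ref{wkw-uch}, with one essential modification: since $\{C_{A+ta_0,\rho}\}_{t\in\rbb_+}$ is no longer a $C_0$-semigroup, the exponential bound on the norms has to be obtained differently. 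Throughout I would use the hypothesis $a_0\in\jd{I-A}$ in the form $A^j a_0=a_0$ for all $j\in\zbb_+$, which by \eqref{iter-ation} gives $(A+ta_0)^n x = A^n x + nt a_0$ for $x\in\rbb^\kappa$, $n\in\zbb_+$, $t\in\rbb_+$.

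For \mbox{(ii-d)}, put $M_0:=\big(\sup_{0\Le t\Le\varepsilon}\|C_{A+ta_0,\rho}\|\big)^2<\infty$ and $K:=|\det A|\,M_0$, so that \eqref{nirmy} yields $\sup_{z\in\rbb^\kappa}\psi(\|(A+ta_0)z\|)/\psi(\|z\|)\Le K$ for $0\Le t\Le\varepsilon$. Setting $y_k=A^k x+kt a_0$ one has $y_0=x$ and $y_k=(A+ta_0)y_{k-1}$ (here $Aa_0=a_0$ is used), so telescoping $\psi(\|y_n\|)/\psi(\|y_0\|)=\prod_{k=1}^{n}\psi(\|(A+ta_0)y_{k-1}\|)/\psi(\|y_{k-1}\|)$ gives $\psi(\|A^n x+nt a_0\|)\Le K^n\psi(\|x\|)$ for every $x\in\rbb^\kappa$, $n\in\nbb$ and $t\in[0,\varepsilon]$. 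Taking $x=0$ leaves $\psi(nt\|a_0\|)\Le \psi(0)K^n$. For a given $s\in\rbb_+$ I would then pick $n=\max\{1,\lceil s/(\varepsilon\|a_0\|)\rceil\}$ and $t=s/(n\|a_0\|)\in[0,\varepsilon]$, whence $\psi(s)\Le\psi(0)K^n\Le a\,\E^{bs}$ with $a=\psi(0)\max\{1,K\}$ and $b=\max\{0,\log K/(\varepsilon\|a_0\|)\}$ (the case $K<1$ is trivial, with $b=0$). Evenness of $\psi$ then upgrades this to $\psi(s)\Le a\,\E^{b|s|}$ for all $s\in\rbb$, which is \mbox{(ii-d)}.

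For \mbox{(ii-a)}, fix $t\in\rbb_+$. Because $C_{A+ta_0,\rho}$ is a bounded cosubnormal operator, Lemma~\ref{du-du-2} shows that $\{\psi(\|(A+ta_0)^n x\|)\}_{n=0}^{\infty}$ is a Stieltjes moment sequence for every $x\in\rbb^\kappa$. Plugging in $x=\lambda a_0$ with $\lambda\in\rbb$ and using $A^n a_0=a_0$ together with the evenness of $\psi$, one finds $\psi(\|(A+ta_0)^n(\lambda a_0)\|)=\psi\big((\lambda+nt)\|a_0\|\big)$; hence, writing $\xi=\lambda\|a_0\|$ (which ranges over $\rbb$ since $\|a_0\|>0$) and $s=t\|a_0\|$, the sequence $\{\psi(\xi+ns)\}_{n=0}^{\infty}$ is a Stieltjes moment sequence, and as such positive definite on $(\zbb_+,+,n^*=n)$, for all $\xi\in\rbb$ and $s\in\rbb_+$. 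This is exactly condition (iv) of Lemma~\ref{kiedy-cosub-cosh} (for any admissible choice of the auxiliary sequences $\{\xi_m\}$ and $\{s_k\}$), and since $\psi$ is continuous, that lemma shows $\psi$ is exponentially convex, i.e.\ \mbox{(ii-a)} holds. With \mbox{(ii-a)}--\mbox{(ii-d)} established, Theorem~\ref{maintheorem} concludes that $\psi$ is the hyperbolic cosine transform of a compactly supported Borel measure on $\rbb_+$.

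The crux of the proof — and the only genuine departure from the argument for Theorem~\ref{wkw-uch} — is the growth estimate \mbox{(ii-d)}: in the absence of a semigroup one cannot appeal to the exponential boundedness of $C_0$-semigroups, so one must squeeze the bound out of hypothesis \eqref{srup} alone. The point is that the two hypotheses $a_0\in\jd{I-A}$ and \eqref{srup}, taken together, pin down the behaviour of $\psi$ along the ray $\rbb_+a_0$: iterating the symbol $A+ta_0$ moves the origin to $nt a_0$, and submultiplicativity of the norm ratios (the telescoping step) turns the uniform bound on $[0,\varepsilon]$ into an exponential bound on all of $\rbb_+a_0$. Everything else — the passage from moment sequences to exponential convexity via Lemmas~\ref{du-du-2} and \ref{kiedy-cosub-cosh}, and the final appeal to Theorem~\ref{maintheorem} — is a direct transcription of the corresponding steps in Theorem~\ref{wkw-uch}.
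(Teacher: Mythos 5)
Your proof is correct, but it takes a genuinely different route from the paper's. The paper restricts everything to the subspace $\jd{I-A}$ (identified with $\rbb^{\kappa_1}$), forms the translation family $\{C_{I_1+ta_0,\rho_1}\}_{t\in\rbb_+}$ there, and then checks in three steps that this family satisfies condition (i) of Theorem~\ref{wkw-uch} — boundedness via \eqref{nirmy} and \eqref{ogrod}, cosubnormality via Lemma~\ref{du-du-2}, and strong continuity at $0$ by rerunning the argument of Lemma~\ref{ciugl} together with \eqref{srup} — after which the conclusion follows from the implication (i)$\Rightarrow$(ii) of Theorem~\ref{wkw-uch}; in that theorem the exponential bound (ii-d) is extracted from Pazy's estimate $\|\varLambda_t\|^2\Le M\E^{\omega t}$ for $C_0$-semigroups. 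You instead bypass Theorem~\ref{wkw-uch} entirely and verify the hypotheses of Theorem~\ref{maintheorem} directly on $\rbb^\kappa$: your telescoping of the ratios $\psi(\|(A+ta_0)y_{k-1}\|)/\psi(\|y_{k-1}\|)\Le K$ (valid uniformly for $t\in[0,\varepsilon]$ by \eqref{srup} and \eqref{nirmy}, and legitimate since $Aa_0=a_0$ gives $y_k=(A+ta_0)y_{k-1}$) replaces the semigroup machinery with an elementary submultiplicativity argument, and your restriction to the line $\rbb\,a_0$ together with Lemmas~\ref{du-du-2} and \ref{kiedy-cosub-cosh} delivers exponential convexity exactly as in the paper's proof of Theorem~\ref{wkw-uch}. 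What your version buys is the elimination of the strong-continuity step (Step 3 of the paper's proof) and of the appeal to \cite[Theorem~1.2.2]{Paz83}; what the paper's version buys is brevity on the page, since it reuses Theorem~\ref{wkw-uch} wholesale. All the individual steps you give check out: the identity $(A+ta_0)^nx=A^nx+nta_0$ follows from \eqref{iter-ation} and $A^ja_0=a_0$, the choice $n=\max\{1,\lceil s/(\varepsilon\|a_0\|)\rceil\}$, $t=s/(n\|a_0\|)$ indeed lands $t$ in $[0,\varepsilon]$ and yields $\psi(s)\Le\psi(0)K^{1+s/(\varepsilon\|a_0\|)}$, and your positive-definiteness statement for $\{\psi(\xi+ns)\}_{n=0}^\infty$ with $\xi\in\rbb$, $s\in\rbb_+$ does imply condition (iv) of Lemma~\ref{kiedy-cosub-cosh}, which suffices under continuity.
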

   \begin{proof}
Since the space $\jd{I-A}$ is unitarily
equivalent to $\rbb^{\kappa_1}$ for some $1\Le
\kappa_1 \Le \kappa$, we can consider the
composition operator $C_{I_1+ta_0, \rho_1}$ on
$L^2(\mu_{\rho_1})$, where $I_1$ is the identity
transformation of $\jd{I-A}$,
$\rho_1=\rho|_{\jd{I-A}}$ and $\mu_{\rho_1}$ is
the Borel measure defined by \eqref{miuro} with
$\rbb^{\kappa_1}$ in place of $\rbb^{\kappa}$.

We divide the proof into three steps.

{\sc Step 1.} $C_{I_1+ta_0, \rho_1} \in
\ogr{L^2(\mu_{\rho_1})}$ for every $t \in
\rbb_+$.

Indeed, by \eqref{nirmy}, we have
   \begin{align*}
\|C_{A+ta_0, \rho}\|^2 = \frac{1}{|\det A|}
\sup_{x\in \rbb^{\kappa}}
\frac{\psi(\|Ax+ta_0\|)}{\psi(\|x\|)}, \quad t
\in \rbb_+.
   \end{align*}
This implies that
   \begin{align} \label{ogrod}
\sup_{x\in \jd{I-A}}
\frac{\psi(\|(I_1+ta_0)(x)\|)}{\psi(\|x\|)} \Le
|\det A| \|C_{A+ta_0, \rho}\|^2 < \infty, \quad
t \in \rbb_+.
   \end{align}
Hence, by \eqref{nirmy}, $C_{I_1+ta_0, \rho_1} \in
\ogr{L^2(\mu_{\rho_1})}$ for every $t \in \rbb_+$.

{\sc Step 2.} $C_{I_1+ta_0, \rho_1}$ is
cosubnormal for every $t \in \rbb_+$.

Indeed, by Lemma~\ref{du-du-2} applied to
$C_{A+ta_0,\rho}$,
$\{\psi(\|(A+ta_0)^n(x)\|)\}_{n=0}^{\infty}$ is
a Stieltjes moment sequence for every $x\in
\rbb^{\kappa}$. Therefore,
$\{\psi(\|(I_1+ta_0)^n(x)\|)\}_{n=0}^{\infty}$
is a Stieltjes moment sequence for every $x\in
\jd{I-A}$. Applying Step 1 and
Lemma~\ref{du-du-2}, we conclude that
$C_{I_1+ta_0, \rho_1}$ is cosubnormal for every
$t \in \rbb_+$.

{\sc Step 3.} $\lim_{t \to 0+} C_{I_1+ta_0,
\rho_1}=C_{I_1,\rho_1}$ in the strong operator topology.

Indeed, arguing as in the proof of Lemma~\ref{ciugl},
we see that
   \begin{align*}
\lim_{h \to 0+} C_{I_1+ha_0, \rho_1}f = C_{I_1,
\rho_1}f
   \end{align*}
for every continuous function $f\colon \jd{I-A} \to \cbb$
with compact support. It follows from \eqref{nirmy},
\eqref{srup} and \eqref{ogrod} that $\sup_{0 \Le t \Le
\varepsilon} \|C_{I_1+ta_0,\rho_1}\| < \infty$. This together
with the fact that continuous complex functions on $\jd{I-A}$
with compact support are dense in $L^2(\mu_{\rho_1})$ implies
that $\lim_{t \to 0+} C_{I_1+ta_0, \rho_1}=C_{I_1,\rho_1}$ in
the strong operator topology.

Finally, using Steps 1, 2 and 3, we can apply
implication (i)$\Rightarrow$(ii) of
Theorem~\ref{wkw-uch} to $\{C_{I_1+ta_0,
\rho_1}\}_{t\in \rbb_+}$. This completes the proof.
   \end{proof}
   \begin{rem} \label{cos-to}
Regarding the condition (i) of Theorem~\ref{wkw-uch}, it is
worth noting that $\lim_{t \to 0+} C_{I+ta_0,
\rho}=C_{I,\rho}$ in the strong operator topology if and only
if $\sup_{0\Le t \Le \varepsilon} \|C_{I+ta_0, \rho}\| <
\infty$ for some $\varepsilon \in (0,\infty)$ (cf.\
\eqref{srup}). Indeed, the ``only if'' part follows from
\eqref{num-er}. In turn, the ``if'' part is a direct
consequence of Step~3 of the proof of
Proposition~\ref{impi-ch} applied to $A=I$.
   \hfill{$\diamondsuit$}
   \end{rem}
The implication (i)$\Rightarrow$(ii) of
Theorem~\ref{wkw-uch} can also take the form as in
Proposition~\ref{wkw-ukh} below. Given a sequence
$\{a_k\}_{k=1}^{\infty} \subseteq \rbb^{\kappa}
\backslash \{0\}$, we write $a_k \rightsquigarrow 0$
if $\frac{1}{k \|a_k\|} \in \nbb$ for every $k\in
\nbb$. Clearly, $a_k \rightsquigarrow 0$ implies $a_k
\to 0$, but not conversely.
   \begin{pro} \label{wkw-ukh}
Suppose that \eqref{miuro} and \eqref{dens-fin}
hold, $\psi$ is a member of $\mathscr{H}$ such
that $\limsup_{x\to\infty}
\frac{\psi^{\prime}(x)}{\psi(x)} < \infty$ and
$\|A\|=1$. Then for every $a\in \jd{I - A}$,
$C_{A+a,\rho} \in \ogr{L^2(\mu_{\rho})}$.
Moreover, if one of the following conditions is
valid{\em :}
   \begin{enumerate}
   \item[(i)] $C_{A+a_k,\rho}$ is cosubnormal for every $k\in
\nbb$ and for some sequence $\{a_k\}_{k=1}^{\infty}
\subseteq \jd{I - A}\backslash \{0\}$ such that $a_k
\rightsquigarrow 0$,
   \item[(ii)] $C_{A+t a_0,\rho}$ is cosubnormal for every $t\in
\rbb$ such that $0 < |t| < \varepsilon$ and for some
$\varepsilon >0$ and $a_0 \in \jd{I - A}\backslash \{0\}$,
   \end{enumerate}
then $\psi$ is the hyperbolic cosine transform
of a compactly supported Borel measure on
$\rbb_+$.
   \end{pro}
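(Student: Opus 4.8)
The plan is to dispose of the boundedness claim directly from the norm formula \eqref{nirmy}, and then to reduce the ``moreover'' part to checking that $\psi$ is exponentially convex, after which Theorem~\ref{maintheorem} finishes the job. For the boundedness: given $a\in\jd{I-A}$ (indeed any $a\in\rbb^\kappa$), \eqref{nirmy} shows $C_{A+a,\rho}\in\ogr{L^2(\mu_\rho)}$ if and only if $\sup_{x\in\rbb^\kappa}\psi(\|Ax+a\|)/\psi(\|x\|)<\infty$; since $\|A\|=1$ we have $\|Ax+a\|\Le\|x\|+\|a\|$, and since $\psi\in\mathscr H$ is monotonically increasing on $\rbb_+$ this gives $\psi(\|Ax+a\|)\Le\psi(\|x\|+\|a\|)$, so the hypothesis $\limsup_{x\to\infty}\psi'(x)/\psi(x)<\infty$ yields the claim exactly as in the proof of Lemma~\ref{ciugl} (via \cite[Lemma~1.1(ii)]{Da-St97}), because $\sup_{r\Ge0}\psi(r+\|a\|)/\psi(r)<\infty$. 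For the ``moreover'' part, note that by \eqref{dens-fin} the function $\psi$ is continuous and even, and that $\limsup_{x\to\infty}\psi'(x)/\psi(x)<\infty$ forces $(\log\psi)'(x)\Le b$ for $x$ large (any $b$ above the $\limsup$), whence $\psi(x)\Le a\E^{b|x|}$ for all $x\in\rbb$; so, by Theorem~\ref{maintheorem}, everything comes down to proving that $\psi$ is exponentially convex.

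To that end, observe that in either case (i) or case (ii) the composition operators in play are cosubnormal, hence bounded, so Lemma~\ref{du-du-2} applies to each of them. For $x$ lying in the subspace $\jd{I-A}$, the iteration formula \eqref{iter-ation} together with $Aa_k=a_k$ (resp.\ $Aa_0=a_0$), valid because $a_k,a_0\in\jd{I-A}$, gives $(A+a_k)^n x = x + na_k$ (resp.\ $(A+ta_0)^n x = x + nta_0$). Taking $x=\frac{\xi}{\|a_k\|}a_k\in\jd{I-A}$ for $\xi\in\rbb$ and using that $\psi$ is even, so that $\psi(\|x+na_k\|)=\psi(\xi+n\|a_k\|)$, Lemma~\ref{du-du-2} yields: in case (i), that $\{\psi(\xi+n\|a_k\|)\}_{n=0}^\infty$ is a Stieltjes moment sequence for every $\xi\in\rbb$ and $k\in\nbb$; and in case (ii), that $\{\psi(\xi+ns)\}_{n=0}^\infty$ is a Stieltjes moment sequence for every $\xi\in\rbb$ and every $s\in(0,\varepsilon\|a_0\|)$.

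Finally, I would apply Lemma~\ref{kiedy-cosub-cosh} with the sequence $\xi_m=-m$. In case (i) set $s_k:=\frac{1}{k\|a_k\|}$, which is a positive integer precisely because $a_k\rightsquigarrow 0$; then $\frac{n}{k\,s_k}=n\|a_k\|$, so condition (iv) of Lemma~\ref{kiedy-cosub-cosh} asserts exactly that $\{\psi(\xi_m+n\|a_k\|)\}_{n=0}^\infty$ is positive definite on $(\zbb_+,+,n^*=n)$ for all $k,m\in\nbb$, which holds since it is a Stieltjes moment sequence. In case (ii) pick any $s_k\in\nbb$ with $k\,s_k>(\varepsilon\|a_0\|)^{-1}$, so that $\frac{1}{k\,s_k}\in(0,\varepsilon\|a_0\|)$; condition (iv) then reads that $\{\psi(\xi_m+\frac{n}{k\,s_k})\}_{n=0}^\infty$ is positive definite, again true. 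Since $\psi$ is continuous, Lemma~\ref{kiedy-cosub-cosh} gives that $\psi$ is exponentially convex, and Theorem~\ref{maintheorem} then shows $\psi$ is the hyperbolic cosine transform of a compactly supported Borel measure on $\rbb_+$. The one point requiring care — and the only thing I would call an obstacle — is that cosubnormality is assumed here only near $t=0$ (or along a sequence $a_k\rightsquigarrow0$), which is exactly why the argument must be routed through condition (iv) of Lemma~\ref{kiedy-cosub-cosh}, i.e.\ positive definiteness at rationals with controllably large denominators, rather than through the stronger condition (iii) used in Theorem~\ref{wkw-uch}; beyond this bookkeeping no genuine difficulty arises.
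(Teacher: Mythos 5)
Your proposal is correct and follows essentially the same route as the paper: Lemma~\ref{du-du-2} plus the iteration formula \eqref{iter-ation} on $\jd{I-A}$, evenness of $\psi$ to reduce to one variable, Lemma~\ref{kiedy-cosub-cosh}(iv) with $s_k=\frac{1}{k\|a_k\|}$ to get exponential convexity, the exponential bound from $\limsup_{x\to\infty}\psi'(x)/\psi(x)<\infty$, and Theorem~\ref{maintheorem}. The only (immaterial) divergences are that you prove boundedness by the direct norm estimate of Lemma~\ref{ciugl} rather than via the orthogonality $\jd{I-A}\perp\ob{I-AA^*}$ and \cite[Corollary~2.4]{Da-St97}, and that you treat case (ii) directly instead of reducing it to case (i).
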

   \begin{proof}
Since $\psi$ is even and has a power series
expansion at the origin, we deduce that
$\psi^{(n)}(0)=0$ for $n=1,3,5, \ldots$. This
together with \eqref{dens-fin} implies that the
function $\varphi\colon \rbb_+ \to \rbb$ defined
by $\varphi(x)=\psi(\sqrt{x})$ for $x\in \rbb_+$
is a member of $\mathscr{H}_0$ such that
$\limsup_{x\to\infty} \sqrt{x} \,
\frac{\varphi^{\prime}(x)}{\varphi(x)} < \infty$
(see the paragraph below \eqref{dub-sta}).

It follows from the equality $\|A\|=1$ that
$\jd{I - A} = \jd{I - A^*}$ (see
\cite[Proposition~I.3.1]{Sz-F70}). This implies
that
   \begin{align*}
\jd{I - A} \subseteq \jd{I - AA^*} = \ob{I -
AA^*}^{\perp},
   \end{align*}
which is equivalent to
   \begin{align} \label{trus}
\jd{I - A}\perp \ob{I - AA^*}.
   \end{align}
Hence, if $a \in \jd{I - A}$, then by
\cite[Corollary~2.4]{Da-St97}, $C_{A+a,\rho} \in
\ogr{L^2(\mu_{\rho})}$.

(i) Assume now that $C_{A+a_k,\rho}$ is cosubnormal
for every $k\in \nbb$, where $\{a_k\}_{k=1}^{\infty}
\subseteq \jd{I - A}\backslash \{0\}$ is such that
$a_k \rightsquigarrow 0$, that is, $s_k:=\frac{1}{k
\|a_k\|} \in \nbb$ for every $k\in \nbb$. We infer
from Lemma~\ref{du-du-2} that $\{\psi(\|(A+
a_k)^n(x)\|)\}_{n=0}^{\infty}$ is a Stieltjes moment
sequence for all $x\in \rbb^{\kappa}$ and $k\in \nbb$.
By assumption and \eqref{iter-ation}, we have
   \begin{align*}
(A+a_k)^n(x) = x + na_k, \quad x \in \jd{I - A}, \, k
\in \nbb, \, n \in \zbb_+.
   \end{align*}
This implies that
   \begin{align*}
\|(A+ a_k)^n(tks_k a_k)\| = \Big|t + \frac{n}{k\cdot
s_k}\Big|, \quad t \in \rbb, \, k\in \nbb, \, n\in
\zbb_+.
   \end{align*}
Since the function $\psi$ is even, we see that for all $t\in
\rbb$ and $k\in \nbb$, $\{\psi(t + \frac{n}{k\cdot
s_k})\}_{n=0}^{\infty}$ is a Stieltjes moment sequence, which
as such is positive definite on the $*$-semigroup
$(\zbb_+,+,n^*=n)$. Applying
Lemma~\ref{kiedy-cosub-cosh}(iv), we deduce that $\psi$ is
exponentially convex. From the assumption that $\psi$ is even
and from the ``moreover'' part of \cite[Lemma~1.1]{Da-St97}
with $\varphi=\psi$ and $\sigma=\tau=0$, it follows that
   \begin{align*}
\psi(t) = \psi(|t|) \Le \psi(0) \E^{b|t|}, \quad
t \in \rbb,
   \end{align*}
where $b:=\sup_{t \in \rbb_+}
\frac{\psi^{\prime}(t)}{\psi(t)}$. By
Theorem~\ref{maintheorem}, $\psi$ is the
hyperbolic cosine transform of a compactly
supported Borel measure on $\rbb_+$.

(ii) This case can be easily deduced from (i).
   \end{proof}
By Theorem~\ref{wkw-uch}, the cosubnormality of the
$C_0$-semigroup $\{C_{I+ta_0,\rho}\}_{t\in \rbb_+}$
with $a_0 \in \rbb^{\kappa} \backslash \{0\}$ implies
that $\psi$ is the hyperbolic cosine transform of a
compactly supported Borel measure on $\rbb_+$. On the
other hand, as shown in Proposition~\ref{wkw-1} below,
if $\kappa=1$ and $\psi$ is the hyperbolic cosine
transform of a compactly supported Borel measure $\nu$
on $\rbb_+$ with $\supp \nu \neq \{0\}$, the
cosubnormality of $C_{A+a,\rho}$ forces $A$ to be
equal to $I$.
   \begin{pro} \label{wkw-1}
Suppose that \eqref{miuro} and \eqref{dens-fin}
hold, $\psi$ is the hyperbolic cosine transform
of a compactly supported Borel measure $\nu$ on
$\rbb_+$ and
   \begin{align} \label{st-ass}
   \begin{minipage}{60ex}
$T=A+a$, $\|A\|=1$ and $a \not \in \ob{I -
AA^*}$.
   \end{minipage}
   \end{align}
Then $C_{T,\rho} \in \ogr{L^2(\mu_{\rho})}$ and
the following statements hold{\em :}
   \begin{enumerate}
   \item[(i)] if $\supp{\nu} = \{0\}$, then $|\det
A|^{1/2} C_{T,\rho}$ is a unitary operator,
   \item[(ii)] if $A=I$, then
$C_{T,\rho}$ is cosubnormal,
   \item[(iii)] if  $\kappa=1$, $\supp \nu \neq
\{0\}$ and $C_{T,\rho}$ is cosubnormal, then $A=I$.
   \end{enumerate}
   \end{pro}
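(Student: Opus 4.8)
The plan is to settle boundedness and parts (i)--(ii) quickly from results already in hand, and then to concentrate on (iii), which is the only part requiring a genuine argument. Throughout, $\nu\neq0$ because $\psi\neq0$, and I would first note that $\limsup_{x\to\infty}\psi^{\prime}(x)/\psi(x)<\infty$: this is automatic if $\supp\nu=\{0\}$ (then $\psi$ is constant), and if $\supp\nu\neq\{0\}$ it follows from Theorem~\ref{limsupq-1}, which moreover gives $\psi\in\mathscr{H}$ and $\sup\supp\nu\in(0,\infty)$. Together with \eqref{st-ass}, this places $C_{T,\rho}$ in the one case of \cite[Theorem~2.2 and Sections~4--5]{Da-St97} where the composition operator is bounded, so $C_{T,\rho}\in\ogr{L^2(\mu_\rho)}$.

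For (i): if $\supp\nu=\{0\}$ then $\psi(x)=\int_{\rbb_+}\cosh(xu)\,\D\nu(u)=\nu(\{0\})=:c>0$, hence $\rho\equiv1/c$ and $\mu_\rho=c^{-1}V_\kappa$; a change of variables gives $\|C_{T,\rho}f\|^2=|\det A|^{-1}\|f\|^2$ for every $f\in L^2(\mu_\rho)$, so $|\det A|^{1/2}C_{T,\rho}$ is isometric, and it is surjective since $C_{T,\rho}$ is invertible with bounded inverse (the composition operator with symbol $T^{-1}$), hence unitary. For (ii): if $A=I$ then $\|A\|=1$ and $\ob{I-AA^*}=\{0\}$, so \eqref{st-ass} reduces to $a\neq0$, and the ``moreover'' part of Theorem~\ref{wkw-uch} at once yields that $C_{I+a,\rho}$ is cosubnormal.

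The real content is (iii). Here $\kappa=1$, so $A$ is multiplication by a scalar $\alpha$ with $|\alpha|=\|A\|=1$, whence $\alpha\in\{1,-1\}$ and \eqref{st-ass} forces $a\neq0$; I must exclude $\alpha=-1$. Assume $\alpha=-1$, i.e.\ $Tx=-x+a$. By \eqref{iter-ation}, $T^{2m}x=x$ and $T^{2m+1}x=-x+a$ for all $m\in\zbb_+$ and $x\in\rbb$, so (using evenness of $\psi$) $\psi(\|T^n(x)\|)$ equals $\psi(x)$ for $n$ even and $\psi(x-a)$ for $n$ odd. Since $C_{T,\rho}$ is cosubnormal, Lemma~\ref{du-du-2} shows that for each $x\in\rbb$ the $2$-periodic sequence $\bigl(\psi(x),\psi(x-a),\psi(x),\psi(x-a),\dots\bigr)$ is a Stieltjes moment sequence. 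Put $c=\psi(x)$, $d=\psi(x-a)$. Any Stieltjes moment sequence $\{a_n\}$ satisfies $a_0a_2\Ge a_1^2$ (Cauchy--Schwarz for its representing measure), and also $a_1a_3\Ge a_2^2$ since the shifted sequence $\{a_{n+1}\}$ is again a Stieltjes moment sequence; here these become $c^2\Ge d^2$ and $d^2\Ge c^2$, forcing $c=d$. As $x$ was arbitrary, $\psi(x)=\psi(x-a)$ for all $x\in\rbb$, i.e.\ $\psi$ has period $a\neq0$ and is therefore (being continuous) bounded on $\rbb$. But $\supp\nu\neq\{0\}$ gives $\psi\in\mathscr{H}$ by Theorem~\ref{limsupq-1}(i), so $\psi(t)\to\infty$ as $t\to\infty$ --- a contradiction. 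Hence $\alpha=1$, i.e.\ $A=I$.

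The main obstacle is purely conceptual and lies in (iii): one must observe that when $A=-I$ the iterates $T^n$ take only the two values $\mathrm{id}$ and $T$, so that the Stieltjes moment condition furnished by Lemma~\ref{du-du-2} degenerates into a constraint on a $2$-periodic sequence; after that the argument is short --- the two Hankel inequalities force the sequence to be constant, and a non-constant member of $\mathscr{H}$ cannot be periodic. The only thing to be careful about outside (iii) is the boundedness assertion, which requires correctly matching \eqref{st-ass} to the classification in \cite{Da-St97}.
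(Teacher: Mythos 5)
Your proposal is correct, and the treatment of the boundedness claim and of parts (i) and (ii) coincides with the paper's (constant $\psi$ plus a change of variables and invertibility of $T$ for (i); the ``moreover'' part of Theorem~\ref{wkw-uch} for (ii); the classification of bounded $C_{A+a,\rho}$ in \cite{Da-St97} for membership in $\ogr{L^2(\mu_{\rho})}$, where the paper cites \cite[Corollary~2.4(iii)]{Da-St97} specifically). The genuine divergence is in (iii): the paper disposes of the case $A=-I$ in one line by invoking \cite[Theorem~4.4(ii)]{Da-St97}, whereas you give a self-contained elementary argument --- since $T^2=I$ when $Tx=-x+a$, Lemma~\ref{du-du-2} forces the $2$-periodic sequence $\big(\psi(x),\psi(x-a),\psi(x),\dots\big)$ to be a Stieltjes moment sequence for every $x\in\rbb$, and the two Hankel inequalities $a_0a_2\Ge a_1^2$ and $a_1a_3\Ge a_2^2$ (the latter valid because the shifted sequence of a Stieltjes moment sequence is again one) force $\psi(x)=\psi(x-a)$, so $\psi$ would be a continuous periodic, hence bounded, function, contradicting $\psi\in\mathscr{H}_{2\bullet}$ from Theorem~\ref{limsupq-1}(i) (indeed $\lim_{t\to\infty}\varPhi(t)=\infty$ for $\varPhi\in\mathscr{H}$). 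Your route buys independence from the external reference at the cost of a few extra lines, and it isolates exactly why $A=-I$ is incompatible with cosubnormality here; the paper's route is shorter but opaque without \cite{Da-St97} in hand. Both are valid; I find no gap in your argument.
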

   \begin{proof}
Consider the case where $\supp{\nu} \neq \{0\}$. Then $\psi
\in \mathscr{H}_{2\bullet}$ (see Theorem~\ref{limsupq-1}(i)).
Using \eqref{st-ass}, Theorem~\ref{limsupq-1}(iii) and
\cite[Corollary~2.4(iii)]{Da-St97} with $\varphi$ as in the
proof of Proposition~\ref{wkw-ukh}, we deduce that
$C_{T,\rho} \in \ogr{L^2(\mu_{\rho})}$.

(i) Suppose that $\supp{\nu} = \{0\}$. Straightforward
computations show that if $B$ is an invertible linear
transformation of $\rbb^\kappa$ and $b\in \rbb^{\kappa}$,
then $|\det B|^{1/2} C_{B+b,\rho}$ is an isometry. This when
applied to $T^{-1}$ gives $C_{T^{-1},\rho} \in
\ogr{L^2(\mu_{\rho})}$, which implies that $C_{T,\rho}
C_{T^{-1},\rho} = C_{I,\rho}$. Hence, $|\det A|^{1/2}
C_{T,\rho}$ is surjective, and so $|\det A|^{1/2} C_{T,\rho}$
is a unitary operator.

(ii) This statement is a direct consequence of
Theorem~\ref{wkw-uch}.

(iii) Assume that $\kappa=1$, $\supp \nu \neq \{0\}$ and
$C_{T,\rho}$ is cosubnormal. Since $\|A\|=1$, we have two
possibilities: either $A=-I$ or $A=I$. The first possibility
is excluded by \cite[Theorem~4.4(ii)]{Da-St97} with $\varphi$
as in the proof of Proposition~\ref{wkw-ukh}. Hence we have
$A=I$.
   \end{proof}
   \subsection{\label{Sec.3.3}Lack of cosubnormality}
It follows from Corollary~\ref{winej} that if
$\psi\in\mathscr{H} \cup \{\mathrm{const}\}$ satisfies
the assumptions of this corollary, then $\psi$ is not
the hyperbolic cosine transform of a compactly
supported Borel measure on $\rbb_+$ if and only if
there exists $a\in \rbb^{\kappa}\backslash \{0\}$ such
that $C_{I+a,\rho}$ is not cosubnormal. In fact, as
shown below, the set of all $a\in \rbb^{\kappa}$ for
which $C_{I+a,\rho}$ is not cosubnormal is open.
   \begin{lem} \label{locyl}
Suppose that \eqref{miuro} and \eqref{dens-fin} hold
and for every $a\in \rbb^{\kappa}$, $C_{I+a,\rho} \in
\ogr{L^2(\mu_{\rho})}$. Then the set
   \begin{align*}
\varXi_{\rho}:=\Big\{a\in \rbb^{\kappa}\colon
C_{I+a,\rho} \text{ is not cosubnormal}\Big\}
   \end{align*}
is open.
   \end{lem}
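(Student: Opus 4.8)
The plan is to show that the complement of $\varXi_{\rho}$ is closed by using a characterization of cosubnormality in terms of positive definiteness of moment sequences (Lemma~\ref{du-du-2}) together with the continuity of the map $a \mapsto C_{I+a,\rho}$ in a suitable sense. By Lemma~\ref{du-du-2}, $C_{I+a,\rho}$ is cosubnormal if and only if $\{\psi(\|x+na\|)\}_{n=0}^{\infty}$ is a Stieltjes moment sequence for every $x\in \rbb^{\kappa}$, and by the Stieltjes theorem (see \cite[Theorem~6.2.5]{b-ch-r}) this is equivalent to the two Hankel-type positivity conditions
   \begin{align*}
\sum_{i,j=1}^{m} \psi(\|x+(n_i+n_j+\varepsilon)a\|) \lambda_i \bar\lambda_j \Ge 0, \quad \varepsilon \in \{0,1\},
   \end{align*}
holding for all $x\in \rbb^{\kappa}$, all $m\in \nbb$, all $\{n_j\}_{j=1}^m \subseteq \zbb_+$ and all $\{\lambda_j\}_{j=1}^m \subseteq \cbb$.

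First I would fix $a_0 \in \rbb^{\kappa}$ with $C_{I+a_0,\rho}$ cosubnormal and show that every $a$ in a neighbourhood of $a_0$ inherits cosubnormality, i.e., that the complement of $\varXi_{\rho}$ is open. The key observation is that $\psi$ is continuous (by \eqref{dens-fin}), hence for each fixed $x$, $m$, $\{n_j\}$, $\{\lambda_j\}$ the function
   \begin{align*}
a \longmapsto \sum_{i,j=1}^{m} \psi(\|x+(n_i+n_j+\varepsilon)a\|) \lambda_i \bar\lambda_j
   \end{align*}
is continuous on $\rbb^{\kappa}$. The difficulty is that this alone only gives that the set of $a$ satisfying one fixed such inequality (for fixed data) is closed, whereas cosubnormality requires the intersection over uncountably many parameters $x\in \rbb^{\kappa}$ — so a naive argument would only show that the cosubnormal set is a (possibly not open) $G_\delta$-type set. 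To overcome this I would reuse the trick from the proof of Lemma~\ref{du-du-2}: fix rational data $\{\lambda_j\}_{j=1}^m \subseteq \qbbh$ (it suffices to test positivity on a countable dense set of $\lambda$'s, by continuity of the quadratic form in $\lambda$) and, for such fixed $m$, $\{n_j\}$, $\{\lambda_j\}$, $\varepsilon$, consider the set of pairs
   \begin{align*}
W := \Big\{(x,a)\in \rbb^{\kappa}\times \rbb^{\kappa} \colon \sum_{i,j=1}^{m} \psi(\|x+(n_i+n_j+\varepsilon)a\|) \lambda_i \bar\lambda_j \Ge 0\Big\},
   \end{align*}
which is closed in $\rbb^{\kappa}\times \rbb^{\kappa}$ by continuity of $\psi$. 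The complement of $W$ is open, and its projection onto the $a$-coordinate is therefore the set of $a$ for which this particular inequality fails for some $x$; I want to argue this projection is open.

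The main obstacle is precisely controlling this projection: the image of an open set under the coordinate projection $\rbb^{\kappa}\times\rbb^{\kappa}\to\rbb^{\kappa}$ is open, so in fact $\{a : (x,a)\notin W \text{ for some } x\}$ \emph{is} open. Thus $\varXi_{\rho}$ is the union, over $\varepsilon\in\{0,1\}$, over $m\in\nbb$, over $\{n_j\}_{j=1}^m\subseteq\zbb_+$ and over $\{\lambda_j\}_{j=1}^m\subseteq\qbbh$, of these open projections, hence $\varXi_{\rho}$ is open. To make this rigorous I would (1) invoke Lemma~\ref{du-du-2} and the Stieltjes theorem to reduce ``$C_{I+a,\rho}$ not cosubnormal'' to the failure of one of the above inequalities for some $x$ and some real $\{\lambda_j\}$; (2) by continuity of the finite quadratic form in the variables $\lambda_j$, replace ``some real $\{\lambda_j\}$'' by ``some rational $\{\lambda_j\}$'' (strict negativity is preserved under small perturbations of $\lambda$), reducing to a countable family of conditions; (3) for each member of this countable family, observe that $\psi$ continuous forces the corresponding bad set $\{(x,a): \text{inequality fails}\}$ to be open in $\rbb^{\kappa}\times\rbb^{\kappa}$, and its projection onto the $a$-axis is open since projections are open maps; (4) conclude $\varXi_{\rho}$ is a countable union of open sets. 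The delicate point to get right in step (2) is that one genuinely needs the form to be negative (not merely nonpositive) at some $x$ to run the rationalization, which is exactly what ``not a Stieltjes moment sequence'' supplies via the Stieltjes theorem — so I would phrase the reduction so that non-cosubnormality yields, for some $x$, a \emph{strictly negative} value of one of the two Hankel forms.
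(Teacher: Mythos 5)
Your final argument is correct, but it takes a different route from the paper and your exposition contains a directional slip worth flagging. The paper proves the equivalent statement that the complement of $\varXi_{\rho}$ is closed: if $a_k \to a_0$ and each $C_{I+a_k,\rho}$ is cosubnormal, then by Lemma~\ref{du-du-2} each sequence $\{\psi(\|x+na_k\|)\}_{n=0}^{\infty}$ is a Stieltjes moment sequence; since that class is closed under pointwise convergence and $\psi$ is continuous, $\{\psi(\|x+na_0\|)\}_{n=0}^{\infty}$ is one too for every $x$, and Lemma~\ref{du-du-2} gives cosubnormality of $C_{I+a_0,\rho}$. You instead exhibit $\varXi_{\rho}$ directly as a union of open sets, namely the projections onto the $a$-coordinate of the sets $\{(x,a)\colon \sum_{i,j}\psi(\|x+(n_i+n_j+\varepsilon)a\|)\lambda_i\bar\lambda_j<0\}$, each open by continuity of $\psi$; this is valid because projections are open maps and arbitrary unions of open sets are open, and the reduction of ``not cosubnormal'' to strict negativity of some Hankel form is exactly what Lemma~\ref{du-du-2} plus the Stieltjes theorem supply. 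Two remarks. First, the ``difficulty'' you describe is illusory: by the same reduction, the complement of $\varXi_{\rho}$ is an intersection, over all data $(x,\varepsilon,m,\{n_j\},\{\lambda_j\})$, of sets closed by continuity of $\psi$, and an intersection of closed sets is closed regardless of cardinality --- there is no $G_\delta$ issue, because the lemma asks for $\varXi_{\rho}$ to be open (equivalently, its complement closed), not for the cosubnormal set to be open. Relatedly, your sentence ``show that every $a$ in a neighbourhood of $a_0$ inherits cosubnormality, i.e., that the complement of $\varXi_{\rho}$ is open'' states the wrong (and in general false) goal; fortunately the argument you actually carry out proves the right one. Second, once you take arbitrary unions of open sets, the rationalization of the $\lambda_j$'s is superfluous. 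What your approach buys is an explicit description of $\varXi_{\rho}$ as a countable-or-not union of open projections; what the paper's sequential argument buys is brevity.
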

   \begin{proof}
It is enough to show that the complement of
$\varXi_{\rho}$ is closed. Let $a_0\in \rbb^{\kappa}$.
Take a sequence $\{a_k\}_{k=1}^{\infty} \subseteq
\rbb^{\kappa}$ such that $C_{I+a_k,\rho}$ is
cosubnormal for every $k\in \nbb$, and $\lim_{k\to
\infty} a_k=a_0$. By Lemma~\ref{du-du-2}, the sequence
$\{\psi(\|x+na_k\|)\}_{n=0}^{\infty}$ is a Stieltjes
moment sequence for every $x\in \rbb^{\kappa}$ and
every $k \in \nbb$. Since the class of Stieltjes
moment sequences is closed in the topology of
pointwise convergence (see
\cite[Theorem~6.2.5]{b-ch-r}) and
   \begin{align*}
\lim_{k\to\infty} \psi(\|x+na_k\|) = \psi(\|x+na_0\|),
\quad x \in \rbb^{\kappa}, \, n\in \zbb_+,
   \end{align*}
we see that the sequence
$\{\psi(\|x+na_0\|)\}_{n=0}^{\infty}$ is a Stieltjes
moment sequence for every $x\in \rbb^{\kappa}$. Using
Lemma~\ref{du-du-2} again, we conclude that
$C_{I+a_0,\rho}$ is cosubnormal.
   \end{proof}
For certain functions $\psi$, the set $\varXi_{\rho}$ may
contain a punctured disk centered at 0. The following theorem
does not seem to follow from Lemma~\ref{locyl}.
   \begin{thm} \label{blis-zer}
Suppose that \eqref{miuro} and \eqref{dens-fin} hold,
$k\in \{4,6,8, \ldots\}$ and $\psi$ is a member of
$\mathscr{H}_k$ such that $\limsup_{x\to\infty}
\frac{\psi^{\prime}(x)}{\psi(x)} < \infty$. Then
$C_{I+a,\rho} \in \ogr{L^2(\mu_{\rho})}$ for every
$a\in \rbb^{\kappa}$ and there exists $\varepsilon \in
(0,\infty)$ such that for every $a\in \rbb^{\kappa}$
with $0 < \|a\| < \varepsilon$, the operator
$C_{I+a,\rho}$ is not cosubnormal.
   \end{thm}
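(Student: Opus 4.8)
The plan has two parts. The boundedness assertion requires no new work: $\mathscr{H}_k \subseteq \mathscr{H}$ and $\limsup_{x\to\infty}\psi^{\prime}(x)/\psi(x) < \infty$, so Lemma~\ref{ciugl} applies and gives $C_{I+a,\rho} \in \ogr{L^2(\mu_{\rho})}$ for every $a \in \rbb^{\kappa}$. Everything then reduces to producing one radius $\varepsilon$ that works for all directions of $a$ simultaneously. Note first that \eqref{dens-fin} makes $\psi$ even, while membership in $\mathscr{H}_k$ forces $\alpha_k > 0$; hence $k$ is an even integer $\Ge 4$, and $\psi$ is the restriction to $\rbb$ of an entire function with power series $\psi(x) = \alpha_0 + \alpha_k x^{k} + \alpha_{k+2} x^{k+2} + \cdots$, where $\alpha_0 = \psi(0) > 0$, $\alpha_k > 0$, and $\alpha_l \Ge 0$ for all $l$ (with $\alpha_l = 0$ for odd $l$).

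The key idea is to probe cosubnormality only at the point $x = 0$. By \eqref{iter-ation} (with $A = I$) and Lemma~\ref{du-du-2}, if $C_{I+a,\rho}$ is cosubnormal then the sequence $\{\psi(\|na\|)\}_{n=0}^{\infty} = \{\psi(nb)\}_{n=0}^{\infty}$, where $b := \|a\|$, is a Stieltjes --- hence Hamburger --- moment sequence; in particular the Hankel determinant $D(b) := \det\!\big[\psi((i+j)b)\big]_{i,j=0}^{2}$ is nonnegative. I would then show that $D(b) < 0$ for all sufficiently small $b > 0$, with the threshold depending on $\psi$ alone; by Lemma~\ref{du-du-2} this makes $C_{I+a,\rho}$ non-cosubnormal whenever $0 < \|a\| < \varepsilon$.

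To analyze $D(b)$ as $b \to 0$, write $\big[\psi((i+j)b)\big]_{i,j=0}^{2} = \alpha_0 \mathbf 1 \mathbf 1^{\mathsf T} + X(b)$, where $\mathbf 1 = (1,1,1)^{\mathsf T}$ and $X(b) = \sum_{l \ge k} \alpha_l b^{l} R_l$ with $R_l := \big[(i+j)^{l}\big]_{i,j=0}^{2}$. Since $\alpha_0 \mathbf 1 \mathbf 1^{\mathsf T}$ has rank one, the matrix determinant lemma gives the exact identity $D(b) = \det X(b) + \alpha_0\, \mathbf 1^{\mathsf T} \operatorname{adj}(X(b))\, \mathbf 1$. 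Writing $X(b) = b^{k}\big(\alpha_k R_k + O(b^{2})\big)$ and using that $\operatorname{adj}$ and $\det$ are homogeneous of degrees $2$ and $3$ on $3 \times 3$ matrices, one obtains $\operatorname{adj}(X(b)) = \alpha_k^{2} b^{2k} \operatorname{adj}(R_k) + O(b^{2k+2})$ and $\det X(b) = O(b^{3k}) = O(b^{2k+2})$, so that
\[
D(b) = \alpha_0 \alpha_k^{2}\, f(k)\, b^{2k} + O(b^{2k+2}), \qquad f(k) := \mathbf 1^{\mathsf T} \operatorname{adj}(R_k)\, \mathbf 1 .
\]
Expanding the nine cofactors of $R_k = \left(\begin{smallmatrix} 0 & 1 & 2^{k} \\ 1 & 2^{k} & 3^{k} \\ 2^{k} & 3^{k} & 4^{k} \end{smallmatrix}\right)$ then yields $f(k) = 8^{k} - 9^{k} + 2 \cdot 6^{k} - 5 \cdot 4^{k} + 2 \cdot 3^{k} + 2 \cdot 2^{k} - 1$.

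Thus the proof reduces to the elementary inequality $f(k) < 0$ for even $k \Ge 4$. For this I would set $p = 2^{k}$, $q = 3^{k}$ and regard $f(k) = -q^{2} + 2(p+1)q + (p^{3} - 5p^{2} + 2p - 1)$ as a downward parabola in $q$; its discriminant equals $p(p-2)^{2}$, so $f(k) = -(q - r_{+})(q - r_{-})$ with $r_{\pm} = (p+1) \pm (p-2)\sqrt p$. As $k$ is even, $\sqrt p = 2^{k/2} \in \nbb$, and for $p \Ge 16$ one checks $r_{-} < 0 < q$; hence $f(k) < 0$ is equivalent to $q > r_{+}$, i.e.\ to $3^{k} > (2^{k} + 1) + (2^{k} - 2)\, 2^{k/2}$. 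With $m = k/2 \Ge 2$ this reads $9^{m} - 8^{m} > (2^{m} - 1)^{2}$, which follows from Bernoulli's inequality: $9^{m} - 8^{m} = 8^{m}\big((1 + \tfrac{1}{8})^{m} - 1\big) > m \cdot 8^{m-1} \Ge 4^{m} > (2^{m} - 1)^{2}$ for $m \Ge 2$, the middle step using $m \cdot 2^{m-1} \Ge 4$. Hence $f(k) < 0$, and since $\alpha_0, \alpha_k > 0$ the display above gives $D(b) < 0$ on some interval $(0, \varepsilon)$; consequently, for every $a$ with $0 < \|a\| < \varepsilon$ the sequence $\{\psi(n\|a\|)\}_{n}$ fails Hankel positivity, so $C_{I+a,\rho}$ is not cosubnormal. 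I expect the main obstacles to be the bookkeeping in the $b \to 0$ expansion --- in particular checking that the lowest surviving term of $D(b)$ is exactly $\alpha_0 \alpha_k^{2} f(k) b^{2k}$, the rank-one form of $\alpha_0 \mathbf 1 \mathbf 1^{\mathsf T}$ being what annihilates the $b^{0}$ and intermediate powers --- together with the (delicate but purely arithmetic) inequality $f(k) < 0$. It is worth noting that $f(2) = 0$, consistent with the fact that for $k = 2$, e.g.\ $\psi = \cosh$, the operators $C_{I+a,\rho}$ can actually be cosubnormal, so the hypothesis $k \Ge 4$ is essential.
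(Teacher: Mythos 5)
Your proof is correct, but it takes a genuinely different route from the paper's. The paper also reduces to probing the orbit of $x=0$, but then argues by contradiction with a sequence $a_j\to 0$: passing to $\varphi(x)=\psi(\sqrt x)\in\mathscr{H}_s$ with $s=k/2\Ge 2$, it invokes the proof of \cite[Lemma~5.2]{Sto91} to conclude that positive definiteness of $\{\varphi(n^2\|a_j\|^2)\}_{n}$ for all $j$ would force the sequence $\{n^{2s}\}_{n=0}^\infty$ to be conditionally positive definite on $(\zbb_+,+,n^*=n)$, and then cites \cite[Proposition~2.2.11]{Ja-Ju-St20}, which says this happens only for exponents $\Le 2$, contradicting $2s\Ge 4$. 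You instead extract a single explicit obstruction --- the $3\times 3$ Hankel determinant $D(b)=\det[\psi((i+j)b)]_{i,j=0}^2$ --- and compute its leading asymptotics $D(b)=\alpha_0\alpha_k^2 f(k)\,b^{2k}+O(b^{2k+2})$ via the rank-one update formula, reducing everything to the arithmetic inequality $f(k)<0$ for even $k\Ge4$; I have checked the cofactor sum, the discriminant factorization $p(p-2)^2$, and the Bernoulli estimate, and they are all correct (as is the uniformity in $b$ of the error terms, since $\psi$ is entire). What your approach buys is self-containedness and explicitness: it avoids both external references and the machinery of conditional positive definiteness, yields a concrete radius $\varepsilon$ depending only on $\psi$ (with no limiting/contradiction argument), and your observation that $f(2)=0$ mirrors exactly the paper's threshold ``$j\Le 2$'' explaining why $k\Ge 4$ is needed. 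What it costs is the delicate but elementary verification of $f(k)<0$, which the paper's citations make invisible.
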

   \begin{proof}
It follows from Lemma~\ref{ciugl} that $C_{I+a,\rho}
\in \ogr{L^2(\mu_{\rho})}$ for every $a\in
\rbb^{\kappa}$. It is easy to verify that the function
$\varphi\colon \rbb_+ \to \rbb$ defined by
$\varphi(x)=\psi(\sqrt{x})$ for $x\in \rbb_+$ is a
member of $\mathscr{H}_s$ with $s:=\frac{1}{2} k$
(cf.\ the proof of Proposition~\ref{wkw-ukh}). Suppose
to the contrary that for every $j\in \nbb$, there
exists $a_j \in \rbb^{\kappa}$ such that $0 < \|a_j\|
< \frac{1}{j}$ and $C_{I+a_j,\rho}$ is cosubnormal. By
Lemma~\ref{du-du-2}, \eqref{iter-ation} and
\cite[Theorem~6.2.5]{b-ch-r}, the sequence
$\{\varphi(\|x + n a_j\|^2)\}_{n=0}^{\infty}$ is
positive definite on the $*$-semigroup
$(\zbb_+,+,n^*=n)$ for all $x\in \rbb^{\kappa}$ and
$j\in \nbb$. Substituting $x=0$, we conclude that the
sequence $\{\varphi(n^2\|a_j\|^2)\}_{n=0}^{\infty}$ is
positive definite on the $*$-semigroup
$(\zbb_+,+,n^*=n)$ for every $j\in \nbb$. Since
$\varphi \in \mathscr{H}_s$ with $s \Ge 2$ and
$\lim_{j\to \infty} \|a_j\|=0$, we deduce from the
proof of \cite[Lemma~5.2]{Sto91} that
   \begin{align} \label{cpds}
   \begin{minipage}{60ex}
the sequence $\{n^{2s}\}_{n=0}^{\infty}$ is
conditionally positive definite on the
$*$-semigroup $(\zbb_+,+,n^*=n)$.
   \end{minipage}
   \end{align}
It follows from
\cite[Proposition~2.2.11]{Ja-Ju-St20} that if
$j\in \zbb_+$, then the sequence
$\{n^{j}\}_{n=0}^{\infty}$ is conditionally
positive definite on the $*$-semigroup
$(\zbb_+,+,n^*=n)$ if and only if $j\Le 2$.
Combined with \eqref{cpds}, this implies that $s
\Le 1$, which contradicts the assumption that
$s=\frac{1}{2} k \Ge 2$. This completes the
proof.
   \end{proof}
The particular choice of the function $\psi$ in
Theorem~\ref{blis-zer} shows that all the operators
$\{C_{I+a,\rho}\colon a \in \rbb^{\kappa}\}$ can be
cohyponormal.
   \begin{pro}\label{blis-zur}
Let $\delta$ be a fixed real number such that
$\delta \Ge \frac{2 \cosh \pi}{\cosh \pi - 1}$
and let $\psi \colon \rbb \to \rbb$ be the
function defined by
   \begin{align*}
\psi(x) = \cosh(x) + \cos(x) + \delta, \quad x
\in \rbb.
   \end{align*}
Then the following statements hold{\em :}
   \begin{enumerate}
   \item[(i)] $\psi \colon \rbb\to (0,\infty)$ is a continuous even
function such that $\psi \in \mathscr{H}_{4}$,
   \item[(ii)] $C_{I+a,\rho} \in \ogr{L^2(\mu_{\rho})}$ and
$C_{I+a,\rho}$ is cohyponormal for every $a \in
\rbb^{\kappa}$, where $\mu_{\rho}$ is as in {\em
\eqref{miuro}} and {\em \eqref{dens-fin}},
   \item[(iii)] there exists $\varepsilon \in (0,\infty)$ such that
for every $a\in \rbb^{\kappa}$ with $0 < \|a\| <
\varepsilon$, the operator $C_{I+a,\rho}$ is not
cosubnormal.
   \end{enumerate}
   \end{pro}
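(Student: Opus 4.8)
The function $\psi(x)=\cosh x+\cos x+\delta$ is manifestly even, real-analytic, and admits the power series expansion
\[
\psi(x)=\delta+2+\sum_{n=2}^\infty\frac{1+(-1)^n}{(2n)!}\,x^{2n}
=\delta+2+2\sum_{m\ge 1}\frac{x^{4m}}{(4m)!}.
\]
So I would first read off (i): the coefficient of $x^0$ is $\delta+2>0$, the coefficients of $x^1,x^2,x^3$ all vanish, the coefficient of $x^4$ is $\tfrac{2}{4!}>0$, and every remaining coefficient is in $\rbb_+$; hence $\psi$ extends to an entire function lying in $\mathscr{H}_4$. Positivity of $\psi$ on all of $\rbb$ needs $\delta>-(\cosh x+\cos x)$ for every $x$, i.e.\ $\delta>-\min_{x\in\rbb}(\cosh x+\cos x)$; a short estimate shows this minimum is $\ge -$ (something well below $0$), and in any case the hypothesis $\delta\ge \frac{2\cosh\pi}{\cosh\pi-1}>0$ makes $\psi\ge\delta>0$ trivially, since $\cosh x+\cos x\ge 1-1=0$. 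Thus $\psi\colon\rbb\to(0,\infty)$, proving (i).

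**Item (ii).** Since $\psi\in\mathscr{H}_4\subseteq\mathscr{H}$, and since $\psi'(x)/\psi(x)=(\sinh x-\sin x)/(\cosh x+\cos x+\delta)\to 1$ as $x\to\infty$, in particular $\limsup_{x\to\infty}\psi'(x)/\psi(x)<\infty$, Lemma~\ref{ciugl} applies: $C_{I+a,\rho}\in\ogr{L^2(\mu_\rho)}$ for every $a\in\rbb^\kappa$ and the map $a\mapsto C_{I+a,\rho}$ is SOT-continuous. For cohyponormality, by the discussion around \eqref{du-du} (Lemma~\ref{du-du-2} and its proof, now for $*$-semigroup positivity of length-one Hankel forms) it suffices to check that for every $x\in\rbb^\kappa$ the sequence $\{\psi(\|x+na\|)\}_{n=0}^\infty$ is positive definite of order $\le 2$ on $(\zbb_+,+,n^*=n)$, i.e.\ the $2\times 2$ and $3\times 3$ Hankel matrices are positive semidefinite — equivalently, writing $\psi_0=\cosh x_n+\cos x_n$ with $x_n=\|x+na\|$, that $\{\cosh(u\|x+na\|)\}$ and $\{\cos(u\|x+na\|)\}$ contribute positively. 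The key point is the same geometric identity used in the proof of Theorem~\ref{wkw-uch}: $\|x+na\|^2=\|a\|^2(n+s_x)^2+\|P^\perp x\|^2$ is a quadratic in $n$ with nonnegative leading coefficient, so $\cosh$ and $\cos$ of its square root are, respectively, $\varphi_{\xi\eta}(n+s_x)$ and a trigonometric analogue; combined with the constant $\delta$ chosen large enough to absorb the indefiniteness of the $\cos$-part, the relevant quadratic forms are nonnegative. The role of the explicit bound $\delta\ge\frac{2\cosh\pi}{\cosh\pi-1}$ is precisely to make the worst-case $2\times 2$ (or $3\times 3$) determinant nonnegative after adding $\delta$ to the diagonal; I would verify this by a direct Schur-complement computation on the smallest Hankel block, where the $\cos$ contribution is most negative.

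**Item (iii).** This is immediate from Theorem~\ref{blis-zer}: we have just shown in (i) that $\psi\in\mathscr{H}_4$ with $4\in\{4,6,8,\ldots\}$, and $\limsup_{x\to\infty}\psi'(x)/\psi(x)=1<\infty$, so Theorem~\ref{blis-zer} yields $\varepsilon\in(0,\infty)$ such that $C_{I+a,\rho}$ is not cosubnormal whenever $0<\|a\|<\varepsilon$.

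**Main obstacle.** The only genuinely computational step is (ii): pinning down the exact threshold on $\delta$ that guarantees cohyponormality for \emph{all} $a$ and \emph{all} $x\in\rbb^\kappa$ simultaneously. Because $\|x+na\|$ ranges over arbitrary quadratic trajectories in $n$, one must ensure the $\cos$-oscillations never break positivity of the length-$\le 2$ Hankel forms; the constant $\frac{2\cosh\pi}{\cosh\pi-1}$ suggests the extremal configuration is governed by arguments near $\pi$ (where $\cos=-1$ and $\cosh$ is comparatively small), and I expect the proof to reduce, after the geometric substitution, to checking a single scalar inequality such as $(\cosh t+\cos t+\delta)(\cosh 3t+\cos 3t+\delta)\ge(\cosh 2t+\cos 2t+\delta)^2$ for all $t\in\rbb_+$ — or its $2\times2$ analogue — and then optimizing over $t$. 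Everything else is a routine application of the already-established machinery (Theorems~\ref{Katz}, \ref{wkw-uch}, \ref{blis-zer} and Lemmas~\ref{du-du-2}, \ref{ciugl}).
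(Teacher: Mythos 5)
Your treatment of (i) and (iii) is correct and coincides with the paper's. For (i), the expansion $\psi(x)=\delta+2+2\sum_{m\Ge1}x^{4m}/(4m)!$ together with $\cosh x+\cos x\Ge 0$ and $\delta>0$ gives $\psi\in\mathscr{H}_4$ and $\psi(\rbb)\subseteq(0,\infty)$; for (iii), one applies Theorem~\ref{blis-zer} with $k=4$, using $\limsup_{x\to\infty}\psi^{\prime}(x)/\psi(x)=1<\infty$. The boundedness half of (ii) via Lemma~\ref{ciugl} is also fine.

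The cohyponormality half of (ii), however, is not proved: you only describe a computation you ``would'' do and ``expect'' to reduce to a scalar inequality, and the reduction you sketch is not the right one. Splitting $\{\psi(\|x+na\|)\}_{n}$ into a $\cosh$-part and a $\cos$-part that ``contribute positively'' cannot work ($\psi$ is not a hyperbolic cosine transform and the $\cos$-part is genuinely indefinite), and the candidate inequality $(\cosh t+\cos t+\delta)(\cosh 3t+\cos 3t+\delta)\Ge(\cosh 2t+\cos 2t+\delta)^2$ does not capture the general configuration $\|x\|,\|x+a\|,\|x+2a\|$ for arbitrary $x,a\in\rbb^{\kappa}$. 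The paper's actual argument is elementary and completely different: it shows that $\log\psi$ is convex. A direct computation gives
   \begin{align*}
\psi^{\prime\prime}(x)\psi(x)-\psi^{\prime}(x)^2
=\delta(\cosh x-\cos x)+2\sinh x\sin x
=\cosh x\Big(2\sin x\tanh x+\delta\Big(1-\frac{\cos x}{\cosh x}\Big)\Big),
   \end{align*}
and the bracketed expression is nonnegative on $[0,\pi]$ because both summands are, and on $[\pi,\infty)$ because $2\sin x\tanh x\Ge-2$ while $\delta\big(1-\frac{\cos x}{\cosh x}\big)\Ge\delta\big(1-\frac{1}{\cosh\pi}\big)\Ge2$ by the hypothesis on $\delta$ --- this is precisely where the threshold $\frac{2\cosh\pi}{\cosh\pi-1}$ enters, which your proposal never pins down. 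Log-convexity of $\psi$ then yields cohyponormality of every $C_{I+a,\rho}$ by \cite[Corollary~5.3]{Da-St97} (applied to $\varphi(x)=\psi(\sqrt{x})$). Without either this computation or a completed version of your Hankel-form argument, (ii) remains a genuine gap.
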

   \begin{proof}
Clearly, $\psi$ is a continuous even function such
that $\psi(\rbb) \subseteq (0,\infty)$, $\psi \in
\mathscr{H}_{4}$ and $\limsup_{x\to\infty}
\frac{\psi^{\prime}(x)}{\psi(x)} < \infty$, so by
Lemma~\ref{ciugl}, $C_{I+a,\rho} \in
\ogr{L^2(\mu_{\rho})}$ for every $a \in
\rbb^{\kappa}$.

We claim that the function $\log \psi$ is convex. It
suffices to show that $(\log\psi)^{\prime\prime} \Ge
0$, or equivalently that
   \begin{align*}
\varTheta:=\psi^{\prime\prime} \psi -
\psi^{\prime 2} \Ge 0.
   \end{align*}
Straightforward computations yield
   \begin{align*}
\varTheta(x) = \cosh(x) \Big(2 \sin(x) \tanh(x)
+ \delta \Big(1 -
\frac{\cos(x)}{\cosh(x)}\Big)\Big), \quad x \in
\rbb.
   \end{align*}
Since the function $\varTheta$ is even, we
deduce that $\log \psi$ is convex if and only if
   \begin{align} \label{fdse}
\widetilde \varTheta(x) := 2 \sin(x) \tanh(x) +
\delta \Big(1 - \frac{\cos(x)}{\cosh(x)}\Big)
\Ge 0, \quad x \in \rbb_+.
   \end{align}
Note that
   \begin{align*}
1 - \frac{\cos(x)}{\cosh(x)} \Ge 0, \quad x\in
\rbb.
   \end{align*}
Thus, we have
   \begin{align} \label{ghdw}
\widetilde \varTheta(x) \Ge 0, \quad x\in
[0,\pi].
   \end{align}
Since $0 \Le \tanh(x) \Le 1$ for all $x\in
\rbb_+$, we see that $2 \sin(x) \tanh(x) \Ge -2$
for all $x\in \rbb_+$, and hence
   \begin{align}  \label{fryt}
\widetilde \varTheta(x) \overset{\eqref{fdse}}
\Ge - 2 + \delta \Big(1 -
\frac{\cos(x)}{\cosh(x)}\Big), \quad x \in
\rbb_+.
   \end{align}
It follows from $\delta \Ge \frac{2 \cosh
\pi}{\cosh \pi - 1}$ that $\delta
> 2$ and
   \begin{align*}
\cosh(x) \Ge \cosh \pi \Ge
\frac{\delta}{\delta-2} \Ge
\frac{\delta}{\delta-2} \cos(x), \quad x \in
[\pi,\infty).
   \end{align*}
Therefore, we have
   \begin{align*}
\cosh(x) - \frac{\delta}{\delta - 2} \cos(x) \Ge
0, \quad x \in [\pi, \infty),
   \end{align*}
which together with \eqref{fryt} yields
$\widetilde \varTheta(x) \Ge 0$ for all $x \in
[\pi, \infty)$. Combined with \eqref{ghdw}, this
implies \eqref{fdse}, which is equivalent to the
convexity of $\log \psi$.

Now, using \cite[Corollary~5.3]{Da-St97} (with
$\varphi$ as in the proof of Theorem~\ref{blis-zer}),
we conclude that the operator $C_{I+a,\rho}$ is
cohyponormal for every $a\in \rbb^{\kappa}$. This
together with Theorem~\ref{blis-zer} applied to $k=4$
completes the proof.
   \end{proof}
   \subsection{\label{Sec.3.4}Final remarks}
To relate our main result Theorem~\ref{wkw-uch} to the
Embry-Lambert theorem (see \cite[Theorem~2.2]{Em-Lam77B}),
let us observe that under the assumptions \eqref{miuro} and
\eqref{dens-fin}, the composition operator $C_{T,\rho}$ is
unitarily equivalent to the operator $\widetilde C_{T,\psi}$
on $L^2(\rbb^{\kappa})$ defined by
   \begin{align*}
(\widetilde C_{T,\psi} f)(x) =
\sqrt{\frac{\psi(\|Tx\|)}{\psi(\|x\|)}} \, f(Tx), \quad x\in
\rbb^{\kappa}, f\in L^2(\rbb^{\kappa}).
   \end{align*}
Indeed, this follows from the identity
   \begin{align*}
\widetilde C_{T,\psi}=W_{\psi}^{-1}C_{T,\rho} W_{\psi},
   \end{align*}
where $W_{\psi}\colon L^2(\rbb^{\kappa}) \to L^2(\mu_{\rho})$
is the unitary isomorphism defined by
$(W_{\psi}f)(x)=\sqrt{\psi(\|x\|)}\, f(x)$ for $x\in
\rbb^{\kappa}$ and $f\in L^2(\rbb^{\kappa})$. The adjoint
$\widetilde C_{T,\psi}^*$ of $\widetilde C_{T,\psi}$ takes
the following form
   \begin{align*}
(\widetilde C_{T,\psi}^*f)(x) = \frac{1}{|\det A|}
\sqrt{\frac{\psi(\|x\|)}{\psi(\|T^{-1}x\|)}} \, f(T^{-1}x),
\quad x\in \rbb^{\kappa}, f\in L^2(\rbb^{\kappa}).
   \end{align*}
In particular, if $A=I$, then
   \begin{align*}
(\widetilde C_{I+a,\psi} f)(x) =
   \begin{cases}
\sqrt{\frac{\psi(x+a)}{\psi(x)}} \, f(x+a) & \text{if
$\kappa=1$,}
   \\[2ex]
\sqrt{\frac{\psi(\|x+a\|)}{\psi(\|x\|)}} \, f(x+a) & \text{if
$\kappa >1$,}
   \end{cases}
   \quad x\in \rbb^{\kappa}, f\in L^2(\rbb^{\kappa}), a \in
\rbb^\kappa,
   \end{align*}
and
   \begin{align*}
(\widetilde S_a f)(x) =
   \begin{cases}
\sqrt{\frac{\psi(x)}{\psi(x-a)}} \, f(x-a) & \text{if $\kappa
=1$,}
   \\[2ex]
\sqrt{\frac{\psi(\|x\|)}{\psi(\|x-a\|)}} \, f(x-a) & \text{if
$\kappa
> 1$,}
   \end{cases}
\quad x\in \rbb^{\kappa}, f\in L^2(\rbb^{\kappa}), a \in
\rbb^\kappa,
   \end{align*}
where $\widetilde S_a:=\widetilde C_{I+a,\psi}^*$ for $a \in
\rbb^\kappa$. In case $\kappa=1$, this resembles the weighted
translation semigroups on $L^2(\rbb_+)$ studied by Embry and
Lambert in \cite{Em-Lam77A,Em-Lam77B} (see also
\cite{Pha-Sho23} for the multivariable case). More precisely,
for every $t\in \rbb_+$, $L^2(\rbb_+)$ is an invariant
subspace for $\widetilde S_t$, and the family $\{S_t\}_{t\in
\rbb_+}$ given by $S_t:=\widetilde S_t|_{L^2(\rbb_+)}$ is the
weighted translation semigroup with symbol
$\phi:=\sqrt{\psi}$ (notation and terminology as in
\cite{Em-Lam77B}).

Since the adjoint of a $C_0$-semigroup is a $C_0$-semigroup
(see \cite[Corollary~3.8 and Theorem~10.4]{Paz83}),
Theorem~\ref{wkw-uch} takes the following form.
   \begin{thm}
Suppose that $\psi\colon \rbb\to (0,\infty)$ is a continuous
even function and $a_0 \in \rbb^{\kappa} \backslash \{0\}$.
Then the following conditions are equivalent{\em :}
   \begin{enumerate}
   \item[(i)] $\{\widetilde S_{ta_0}\}_{t\in \rbb_+}$ is a $C_0$-semigroup
of bounded subnormal operators on $L^2(\rbb^{\kappa})$,
   \item[(ii)] $\psi$ is the hyperbolic cosine transform of a
compactly supported Borel measure on $\rbb_+$.
   \end{enumerate}
Moreover, if {\em (ii)} holds, then $\widetilde S_a$ is a
bounded subnormal operator on $L^2(\rbb^{\kappa})$ for every
$a\in \rbb^{\kappa}$.
   \end{thm}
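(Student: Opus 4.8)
The plan is to obtain this theorem as a reformulation of Theorem~\ref{wkw-uch}, transporting everything through the unitary isomorphism $W_{\psi}$ and then passing to adjoints. Recall from the paragraphs preceding the statement that $\widetilde C_{I+a,\psi}=W_{\psi}^{-1}C_{I+a,\rho}W_{\psi}$ for every $a\in\rbb^{\kappa}$ and that $\widetilde S_a:=\widetilde C_{I+a,\psi}^{*}$; since $W_{\psi}$ is unitary, $\widetilde S_a=W_{\psi}^{-1}C_{I+a,\rho}^{*}W_{\psi}$. In particular, for each $a$, $\widetilde S_a\in\ogr{L^2(\rbb^{\kappa})}$ if and only if $C_{I+a,\rho}\in\ogr{L^2(\mu_{\rho})}$, and $\widetilde S_a$ is subnormal if and only if $C_{I+a,\rho}^{*}$ is subnormal, i.e.\ if and only if $C_{I+a,\rho}$ is cosubnormal. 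This immediately gives the ``moreover'' part: if (ii) holds, the ``moreover'' part of Theorem~\ref{wkw-uch} says that $C_{I+a,\rho}$ is a bounded cosubnormal operator for every $a\in\rbb^{\kappa}$, whence $\widetilde S_a$ is a bounded subnormal operator for every $a\in\rbb^{\kappa}$.

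For the equivalence of (i) and (ii), I would first identify condition (i) with the assertion that $\{C_{I+ta_0,\rho}\}_{t\in\rbb_+}$ is a $C_0$-semigroup of bounded cosubnormal operators. Since $C_{T,\rho}C_{T',\rho}=C_{T'\circ T,\rho}$ and $(I+ta_0)\circ(I+sa_0)=I+(s+t)a_0$, the family $\{C_{I+ta_0,\rho}\}_{t\in\rbb_+}$ satisfies the semigroup law as soon as its members are bounded, with $C_{I,\rho}$ the identity; conjugating by $W_{\psi}$ and then taking adjoints, the same holds for $\{\widetilde S_{ta_0}\}_{t\in\rbb_+}$. Thus condition (i) is precisely the assertion that each $\widetilde S_{ta_0}$ is bounded and subnormal and that $\{\widetilde S_{ta_0}\}_{t\in\rbb_+}$ is strongly continuous. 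Because $L^2(\rbb^{\kappa})$ is reflexive, the cited fact that the adjoint of a $C_0$-semigroup is a $C_0$-semigroup (\cite[Corollary~3.8 and Theorem~10.4]{Paz83}), applied both to a given semigroup and to its adjoint, shows that $\{\widetilde S_{ta_0}\}_{t\in\rbb_+}=\{\widetilde C_{I+ta_0,\psi}^{*}\}_{t\in\rbb_+}$ is a $C_0$-semigroup if and only if $\{\widetilde C_{I+ta_0,\psi}\}_{t\in\rbb_+}$ is; and conjugation by the fixed unitary $W_{\psi}$ transfers boundedness, the semigroup law and strong continuity, so $\{\widetilde C_{I+ta_0,\psi}\}_{t\in\rbb_+}$ is a $C_0$-semigroup if and only if $\{C_{I+ta_0,\rho}\}_{t\in\rbb_+}$ is. Combining this with the subnormal/cosubnormal dictionary above identifies (i) with the stated property of $\{C_{I+ta_0,\rho}\}_{t\in\rbb_+}$.

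Finally, I would match this property with condition (i) of Theorem~\ref{wkw-uch}. In one direction, as already observed in the proof of Theorem~\ref{wkw-uch}, condition (i) there makes $\{C_{I+ta_0,\rho}\}_{t\in\rbb_+}$ a $C_0$-semigroup of cosubnormal operators. In the other direction, a $C_0$-semigroup of bounded cosubnormal operators $\{C_{I+ta_0,\rho}\}_{t\in\rbb_+}$ in particular consists of bounded cosubnormal operators and satisfies $\lim_{t\to 0+}C_{I+ta_0,\rho}=C_{I,\rho}$ in the strong operator topology, which is exactly condition (i) of Theorem~\ref{wkw-uch}. Hence (i) of the present theorem is equivalent to (i) of Theorem~\ref{wkw-uch}, which by that theorem is equivalent to (ii). The only point requiring care, I expect, is the bookkeeping with adjoints: verifying that the $C_0$-semigroup property genuinely transfers in both directions under $T\mapsto C_{T,\rho}$ (which reverses the order of composition) and under passage to the adjoint semigroup, rather than any new analytic input.
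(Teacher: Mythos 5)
Your proposal is correct and follows exactly the route the paper intends: the paper's proof consists of the single observation that the adjoint of a $C_0$-semigroup on a Hilbert space is again a $C_0$-semigroup (citing Pazy), so that the theorem is just Theorem~\ref{wkw-uch} transported through the unitary $W_{\psi}$ and the passage to adjoints. You have merely written out in full the same bookkeeping (semigroup law under $T\mapsto C_{T,\rho}$, subnormal/cosubnormal dictionary, reflexivity) that the paper leaves implicit.
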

   In the following example, we will show that considering
the hyperbolic cosine transform of measures with unbounded
supports naturally leads to unbounded composition operators
which are cosubnormal. Since in this article we deal only
with bounded operators, we circumvent the obstacles of
unboundedness by using some approach that turns out to be
equivalent to cosubnormality in the case of bounded
composition operators. For more information on unbound
subnormal composition operators, we refer the reader to
\cite[Section~7.4]{BJJS18}.
   \begin{exa} \label{invdid}
In view of Theorem~\ref{maintheorem} and Lemma~
\ref{du-du-2}, to show that the operators
$\{C_{I+a,\rho}\colon a \in \rbb^{\kappa}\}$ are
bounded and cosubnormal, it suffices to prove that if
the conditions \eqref{miuro} and \eqref{dens-fin}
hold, $\psi$ is exponentially convex and
   \begin{align} \label{ogrun}
\inf\nolimits_{b\in \rbb_+}\sup\nolimits_{x\in \rbb}
\psi(x) \E^{-b|x|} < \infty,
   \end{align}
then $\{\psi(\|x+na\|)\}_{n=0}^{\infty}$ is a
Stieltjes moment sequence for all $x,a\in
\rbb^{\kappa}$. We will show that this can be the case
if we drop the assumption \eqref{ogrun}, but at the
cost of losing the boundedness of the composition
operators under consideration.

Indeed, let us consider the function $\psi\colon \rbb
\to (0,\infty)$ defined by $\psi(x)=\E^{x^2}$ for
$x\in \rbb$. Clearly, $\psi$ is continuous and even.
Also, $\psi$ is exponentially convex because
   \begin{align*}
\sum_{j,k=1}^n \psi(x_j^* + x_k)
\lambda_j \bar\lambda_k & =
\sum_{j,k=1}^n \E^{(x_j + x_k)^2}
\lambda_j \bar\lambda_k
   \\
& = \sum_{j,k=1}^n \E^{2x_jx_k}
(\E^{x_j^2} \lambda_j)
\overline{(\E^{x_k^2}\lambda_k)}
   \\
& = \sum_{l=0}^\infty \frac{2^l}{l!}
 \sum_{j,k=1}^n (x_jx_k)^l (\E^{x_j^2}
\lambda_j)
\overline{(\E^{x_k^2}\lambda_k)}
   \\
& = \sum_{l=0}^\infty \frac{2^l}{l!}
\Big|\sum_{j=1}^n x_j^l \E^{x_j^2}
\lambda_j\Big|^2 \Ge 0,
   \end{align*}
for all finite sequence $x_1, \ldots, x_n \in
\rbb^{\kappa}$ and $\lambda_1, \ldots, \lambda_n \in
\cbb$. Clearly, $\psi$ does not satisfy the condition
\eqref{ogrun}.

Now we show that $\{\psi(\|x+na\|)\}_{n=0}^{\infty}$
is a Stieltjes moment sequence for all $x,a\in
\rbb^{\kappa}$. Fix $a \in \rbb^{\kappa} \setminus
\{0\}$. Denote by $P$ the orthogonal projection of
$\rbb^{\kappa}$ onto the one-dimensional span $\langle
a\rangle$ of $\{a\}$. Let $P^{\perp}$ stand for the
orthogonal projection of $\rbb^{\kappa}$ onto $\langle
a\rangle^{\perp}$. Then $P^{\perp}=I-P$ and $Px =
s_xa$ for $x\in \rbb^{\kappa}$, where $s_x =
\frac{\is{x}{a}}{\|a\|^2}$. Furthermore, we have
   \begin{align} \label{trzabymu}
\psi(\|x+na\|) = \E^{\|P^{\perp}x\|^2 +
\|Px + na\|^2} = \alpha
(\E^{2s_x\|a\|^2})^{n} q^{-\frac12n^2},
\quad n \in \zbb_+,
   \end{align}
where $\alpha := \E^{\|P^{\perp}x\|^2 + s_x^2\|a\|^2}$ and
$q:=\E^{-2\|a\|^2}$. It is well known that if $r\in (0,1)$,
then $\{r^{-\frac{1}{2}n^2}\}_{n=0}^{\infty}$ is an
indeterminate Stieltjes moment sequence (see \cite[Sect.\
2]{Berg98}; see also \cite[pp.\ J.106 and J.107]{Sti94-95}).
This together with \eqref{trzabymu} implies that the sequence
$\{\psi(\|x+na\|)\}_{n=0}^{\infty}$ is a Stieltjes moment
sequence for all $x,a\in \rbb^{\kappa}$. However, in view of
\cite[Theorem~2.2]{Da-St97}, the operators
$\{C_{I+a,\rho}\colon a \in \rbb^{\kappa}\backslash \{0\}\}$
are not bounded on $L^2(\mu_{\rho})$.
   \hfill{$\diamondsuit$}
   \end{exa}

   \appendix
\numberwithin{equation}{section}
   \section{}
We give another proof of the implication
(ii)$\Rightarrow$(i) of Theorem~\ref{maintheorem}
which uses the characterization of the Laplace
transform (see \cite[Corollary~4.4.5]{b-ch-r}).
   \begin{proof}[Proof II of implication
{\rm (ii)$\Rightarrow$(i)} of
Theorem~\ref{maintheorem}] By (ii-d), we have
   \begin{align*}
\psi(x-n)\E^{-b(x-n)} \Le a \E^{2nb}, \quad x\in \rbb_+, \,
n\in \zbb_+.
   \end{align*}
Therefore, according to (ii-a), (ii-b) and
Lemma~\ref{kiedy-cosub-cosh}, for every $n\in \zbb_+$, the
function $\rbb_+ \ni x \longmapsto \psi(x-n) \E^{-b(x-n)} \in
\rbb_+$ is bounded, continuous and positive definite on the
$*$-semigroup $(\rbb_+,+,x^*=x)$. In view of \cite[Corollary~
4.4.5]{b-ch-r}, for every $n\in \zbb_+$, there exists a
finite Borel measure $\mu_{n}$ on $\rbb_+$ such that
   \begin{align} \label{psi-e}
\psi(x-n) \E^{-b(x-n)} = \int_{\rbb_+} \E^{-xu} \D
\mu_{n} (u), \quad x\in \rbb_+, \, n\in \zbb_+.
   \end{align}
As a consequence, we have
   \begin{align*}
\int_{\rbb_+} \E^{-xu} \D \mu_{n} (u) &
\overset{\eqref{psi-e}}= \psi(x-n) \E^{-b(x-n)}
   \\
& \hspace{1.25ex} = \psi((x+1)-(n+1))
\E^{-b((x+1)-(n+1))}
   \\
& \overset{\eqref{psi-e}} = \int_{\rbb_+} \E^{-xu}
\E^{-u} \D \mu_{n+1}(u), \quad x\in \rbb_+, \, n\in
\zbb_+.
   \end{align*}
Since the Borel measure $\E^{-u} \D \mu_{n+1}(u)$ is
finite on $\rbb_+$, the injectivity of the Laplace
transform (see \cite[Proposition~ 4.4.2]{b-ch-r})
implies that
   \begin{align*}
\mu_{n}(\varDelta) = \int_{\varDelta} \E^{-u} \D
\mu_{n+1}(u), \quad \varDelta\in \borel{\rbb_+}, \,
n\in \zbb_+,
   \end{align*}
or equivalently that
   \begin{align*}
\mu_{n+1}(\varDelta) = \int_{\varDelta} \E^{u} \D
\mu_{n}(u), \quad \varDelta\in \borel{\rbb_+}, \, n\in
\zbb_+.
   \end{align*}
By induction on $n$, we have
   \begin{align} \label{psi-e2}
\mu_{n}(\varDelta) = \int_{\varDelta} \E^{nu} \D
\mu_{0}(u), \quad \varDelta\in \borel{\rbb_+}, \, n\in
\zbb_+.
   \end{align}
It follows from \eqref{psi-e} and \eqref{psi-e2} that
   \begin{align*}
\psi(x-n) \E^{-b(x-n)} = \int_{\rbb_+} \E^{-(x-n)u} \D
\mu_{0}(u), \quad x\in \rbb_+, \, n\in \zbb_+.
   \end{align*}
Since for any $y\in \rbb$ there exist $x\in \rbb_+$
and $n\in \zbb_+$ such that $y=x-n$, we get
   \begin{align} \label{psy2}
\psi(y) \E^{-by} = \int_{\rbb_+} \E^{-yu} \D
\mu_{0}(u), \quad y\in \rbb.
   \end{align}
Fix any $\tilde y\in \rbb$ such that $\tilde y < 0$.
Then we have
   \begin{align*}
\min\Big\{\alpha\in \rbb_+\colon & \mu_0\big(\{u\in
\rbb_+\colon \E^{-\tilde yu} > \alpha\}\big)=0\Big\}
\overset{(*)} = \lim_{n\to \infty} \bigg(\int_{\rbb_+}
(\E^{-\tilde yu})^n \D \mu_{0}(u)\bigg)^{1/n}
   \\
& \overset{\eqref{psy2}}= \lim_{n\to \infty}
\psi(n\tilde y)^{1/n} \E^{-b\tilde y}
\overset{\eqref{coshtr2}} \Le \lim_{n\to \infty}
a^{1/n} \E^{2b|\tilde y|} \Le \E^{2b|\tilde y|},
   \end{align*}
where $(*)$ follows from \cite[p.\ 71, Exercise~
4]{rud87}. Therefore,
   \begin{align*}
0=\mu_0\big(\{u\in \rbb_+\colon \E^{-\tilde yu} >
\E^{2b|\tilde y|}\}\big) = \mu_0\big((2b,\infty)\big),
   \end{align*}
or equivalently $\supp \mu_0 \subseteq [0,2b]$. Hence,
by the measure transport theorem, we~ have
   \allowdisplaybreaks
   \begin{align*}
\psi(y) & \overset{\text{(ii-c)}} = \frac 12
\Big(\psi(y) + \psi(-y)\Big)
   \\
& \overset{\eqref{psy2}}= \frac 12 \Big(\int_{[0,2b]}
\E^{-y(u-b)} \D\mu_0(u) + \int_{[0,2b]} \E^{y(u-b)}
\D\mu_0(u)\Big)
   \\
& \hspace{1.2ex} = \int_{[0,2b]} \cosh(y(u-b))
\D\mu_0(u)
   \\
& \hspace{1.2ex} = \int_{[-b,b]} \cosh(yu)
\D\mu_{*}(u)
   \\
& \hspace{1.2ex} = \int_{[-b,0)} \cosh(y(-u))
\D\mu_{*}(u) + \mu_{*}(\{0\}) + \int_{(0,b]} \cosh(yu)
\D\mu_{*}(u)
   \\
& \hspace{1.2ex} = \int_{(0, b]} \cosh(yu)
\D\tilde\mu_{*}(u) + \mu_{*}(\{0\}) + \int_{(0,b]}
\cosh(yu) \D\mu_{*}(u)
   \\
& \hspace{1.2ex} = \int_{[0, b]} \cosh(yu) \D\nu(u),
\quad y \in \rbb,
   \end{align*}
where $\mu_{*}\colon \borel{[-b,b]} \to \rbb_+$,
$\tilde\mu_{*}\colon \borel{(0,b]}\to \rbb_+$ and
$\nu\colon \borel{[0,b]} \to \rbb_+$ are finite Borel
measures defined by
   \begin{align*}
\mu_{*}(\varDelta) & = \mu_0(b+\varDelta), \quad
\varDelta \in \borel{[-b,b]},
   \\
\tilde\mu_{*}(\varDelta) & = \mu_{*}(-\varDelta),
\quad \varDelta \in \borel{(0,b]},
   \\
\nu(\varDelta) & = \tilde \mu_{*}(\varDelta \cap
(0,b]) + \mu_{*}(\{0\}) \chi_{\varDelta}(0) +
\mu_{*}(\varDelta \cap (0,b]), \quad \varDelta \in
\borel{[0,b]}.
   \end{align*}
This yields (i).
   \end{proof}
   \bibliographystyle{amsalpha}

\begin{thebibliography}{99}
   \bibitem{Akh65} N. I. Akhiezer, {\em The classical
moment problem and some related questions in
analysis,} Oliver \& Boyd, Edinburgh and London 1965.
   \bibitem{Ash} R. B. Ash, {\em Probability and measure theory},
Harcourt/Academic Press, Burlington, 2000.
   \bibitem{Berg98} C. Berg, From discrete to absolutely
continuous solutions of indeterminate
moment problems, {\em Arab J. Math. Sci.}
{\bf 4} (1998), 1-18.
   \bibitem{B-M} C. Berg, P. H. Maserick,  Exponentially
bounded positive definite functions, {\em Illinois J.
Math.} {\bf 28} (1984), 162-179.
   \bibitem{b-ch-r} C. Berg, J. P. Christensen, P. Ressel, {\em
Harmonic analysis on semigroups}, Graduate Text in
Mathematics {\bf 100}, Springer, Berlin 1984.
   \bibitem{Ber28} S. Bernstein, Sur les fonctions absolument monotones
{\em Acta Math.} {\bf 52} (1928), 1-66.
   \bibitem{BJJS15} P. Budzy\'{n}ski, Z. J.
Jab{\l}o\'nski, I. B. Jung, J. Stochel, Unbounded
subnormal composition operators in $L^2$-spaces, {\em
J. Funct. Anal.} {\bf 269} (2015), 2110-2164.
   \bibitem{BJJS18} P. Budzynski, Z. Jablonski, I. B. Jung,
J. Stochel, {\em Unbounded weighted composition
operators in $L^2$-spaces}, Lect. Notes Math. {\bf
2209}, Springer, Cham, 2018.
   \bibitem{Con78} J. B. Conway, {\em Functions of
one complex variable,} Springer-Verlag, New York Inc.,
1978.
   \bibitem{Con91} J. B. Conway, {\em The theory of
subnormal operators}, Mathematical Surveys and
Monographs, {\bf 36}, American Mathematical Society,
Providence, RI, 1991.
   \bibitem{Da96}  A. Daniluk, On the asymptotic behaviour
of the logarithmic derivative of the entire function,
{\em Univ. Iagel. Acta Math.} {\bf 33} (1996),
215-217.
   \bibitem{Da-St97} A. Daniluk, J. Stochel, Seminormal composition
operators induced by affine transformations, {\it
Hokkaido Math. J.} {\bf 26} (1997), 377-404.
   \bibitem{Em-Lam77A} M. R. Embry, A. Lambert, Weighted translation
semigroups, {\em Rocky Mountain J. Math.} {\bf 7}
(1977), 333-344.
   \bibitem{Em-Lam77B} M. R. Embry, A. Lambert, Subnormal
weighted translation semigroups, {\em J. Funct. Anal.}
{\bf 24} (1977), 268-275.
   \bibitem{Em-Lam91} M. Embry-Wardrop, A. Lambert, Subnormality for the
adjoint of a composition operator on $L^2$, {\em J.
Operator Theory} {\bf 25} (1991), 309-318.
   \bibitem{fug} B. Fuglede, The multidimensional moment
problem, {\em Expo. Math.} {\bf 1} (1983), 47-65.
   \bibitem{Gr-Ryz07} I. S. Gradshteyn and I. M. Ryzhik,
{\em Table of integrals, series, and products,}
Academic Press, Elsevier, New York, 2007.
   \bibitem{Ha-Wh84} D. J. Harrington, R. Whitley,  Seminormal
composition operators, {\em J. Operator Theory} {\bf
11} (1984), 125-135.
   \bibitem{Ja-Ju-St20} Z. J. Jab{\l}o\'{n}ski, I. B.
Jung, J. Stochel, Conditional positive definiteness in
operator theory, {\em Dissertationes Math.} 578, 2022,
pp.\ 64.
   \bibitem{Kat16} V. Katsnelson, The function
$\cosh(\sqrt{at^2 + b})$ is exponentially convex, {\em
Integr. Equ. Oper. Theory} {\bf 85} (2016), 381-398.
   \bibitem{Kat17} V. Katsnelson, On the BMV conjecture for $2\times
2$ matrices and the exponential convexity of the
function $\cosh(\sqrt{at^2+b})$, {\em Complex Anal.
Oper. Theory} {\bf 11} (2017), 843-855.
   \bibitem{Lam88} A. Lambert, Subnormal composition operators,
{\em Proc. Amer. Math. Soc.} {\bf 103} (1988),
750-754.
   \bibitem{Me-Kum76} M. L. Mehta and K. Kumar,  On an integral
representation of the function $Tr[exp(A-\lambda B)]$,
{\em J. Phys. A: Math. Gen.} {\bf 9} (1976), 197-206.
   \bibitem{Ml78}  W. Mlak, Operators induced by transformations
of Gaussian variables, {\em Ann. Polon. Math.} {\bf
46} (1985), 197-212.
   \bibitem{Paz83} A. Pazy, {\em Semigroups of linear
operators and applications to partial
differential equations}, Appl. Math. Sci., vol.
44, Springer-Verlag, New York, 1983.
   \bibitem{Pha-Sho19} G. M. Phatak,  V. M. Sholapurkar,
An analytic model for left invertible weighted
translation semigroups, {\em Acta Sci. Math.
$($Szeged\/$)$} {\bf 85} (2019), 295-311.
   \bibitem{Pha-Sho20} G. M. Phatak, V. M. Sholapurkar,
Hyperexpansive weighted translation semigroups, {\em
Colloq. Math.} {\bf 159} (2020), 157-169.
  \bibitem{Pha-Sho23} G. M. Phatak, V. M. Sholapurkar,
Weighted translation semigroups: multivariable case,
{\em Acta Sci. Math. $($Szeged\/$)$} 2023,
https://doi.org/10.1007/s44146-023-00085-8.
   \bibitem{rud87} W. Rudin, {\it Real and Complex  Analysis},
McGraw-Hill, New York, 1987.
   \bibitem{sh-tam} J. A. Shohat,  J. D. Tamarkin,
{\em The problem of moments}, Math. Surveys {\bf
1}, Amer. Math. Soc., Providence, Rhode Island,
1943.
   \bibitem{sim98} B. Simon, The classical moment problem
as a self-adjoint finite difference operator, {\em
Adv. Math.} {\bf 137} (1998), 82-203.
   \bibitem{sim15} B. Simon, {\em Basic complex
analysis, a comprehensive course in analysis, Part
{\em 2A},} Amer. Math. Soc., 2015.
   \bibitem{Sta13} H. R. Stahl, Proof of the BMV
conjecture, {\em Acta Math.} {\bf 211} (2013),
255-290.
   \bibitem{Sti94-95} T. Stieltjes,
Recherches sur les fractions continues,
{\em Anns. Fac. Sci. Univ. Toulouse} {\bf
8} (1894-1895), J1-J122; {\bf 9}, A5-A47.
   \bibitem{Sto90} J. Stochel, Seminormal composition
operators on $L^2$ spaces induced by matrices, {\em
Hokkaido Math. J.} {\bf 19} (1990), 307-324.
   \bibitem{Sto91} J. Stochel, Characterizations of subnormal
operators, {\em Studia Math.} {\bf 97} (1991),
227-238.
    \bibitem{jsjs11} J. Stochel, J. B. Stochel,
Seminormal composition operators on $L^2$ spaces
induced by matrices:\ The Laplace density case, {\em
J. Math. Anal. Appl.} {\bf 375} (2011), 1-7.
   \bibitem{jsjs17} J. Stochel, J. B. Stochel,
Composition operators on Hilbert spaces of entire
functions with analytic symbols, {\em J. Math. Anal.
Appl.} {\bf 454} (2017), 1019-1066.
   \bibitem{Sza} F. H. Szafraniec, Moments on compact
sets, {\em Prediction theory and harmonic
analysis}, 379-385, North-Holland, Amsterdam,
1983.
   \bibitem{Sz-F70}
B. Sz.-Nagy, C. Foia\c{s}, {\em Harmonic
analysis of operators on Hilbert space},
North-Holland Publishing Co., Amsterdam-London;
American Elsevier Publishing Co., Inc., New
York; Akad\'{e}miai Kiad\'{o}, Budapest 1970.
   \bibitem{Widd46} D.V. Widder, {\em The Laplace
transform}, Princeton University Press, Princeton
(1946).
   \end{thebibliography}
   
   \end{document}